\newtheorem{theorem}{Theorem}
\newtheorem{corollary}[theorem]{Corollary}
\newtheorem{definition}{Definition}
\newtheorem{lemma}{Lemma}
\newtheorem{proposition}[theorem]{Proposition}
\newtheorem{remark}{Remark}
\let\e=\varepsilon
\let\O=\Omega
\numberwithin{equation}{section}
\let\hide\iffalse
\newcommand{\R}{\mathbb{R}}
\newcommand{\be}{\begin{equation}}
\newcommand{\bm}{\begin{multline}}
\newcommand{\ee}{\end{equation}}
\newcommand{\Bes}{\begin{eqnarray*}}
\newcommand{\Ees}{\end{eqnarray*}}
\newcommand{\Be}{\begin{equation} }
\newcommand{\Ee}{\end{equation}}
\def\O{\Omega}
\def\R{\mathbb{R}}
\def\B{\begin{equation}}
\def\E{\end{equation}}
\def\BN{\begin{eqnarray*}}
\def\EN{\end{eqnarray*}}
\begin{document}
\title{Cercignani-Lampis boundary in the Boltzmann theory}

\author{Hongxu Chen}
\address{Mathematics Department, University of Wisconsin-Madison, 480 Lincoln Dr., Madison, WI 53705 USA.}
\email{hchen463@wisc.edu}

 \begin{abstract}
The Boltzmann equation is a fundamental kinetic equation that describes the dynamics of dilute gas. In this paper we study the local well-posedness of the Boltzmann equation in bounded domain with the Cercignani-Lampis boundary condition, which describes the intermediate reflection law between diffuse reflection and specular reflection via two accommodation coefficients. We prove the local-in-time well-posedness of the equation by establishing an $L^\infty$ estimate. In particular, for the $L^\infty$ bound we develop a new decomposition on the boundary term combining with repeated interaction through the characteristic. Via this method, we construct a unique steady solution of the Boltzmann equation with constraints on the wall temperature and the accommodation coefficient.
 \end{abstract}

\maketitle

\section{Introduction}
In this paper we consider the classical Boltzmann equation, which describes the dynamics of dilute particles. Denoting $F(t,x,v)$ the phase-space-distribution function of particles at time $t$, location $x\in\Omega$ moving with velocity $v\in\mathbb{R}^3$, the equation writes:
\begin{equation}\label{eqn: VPB equation}
\partial_t F + v\cdot \nabla_x F = Q(F,F)\,.
\end{equation}
The collision operator $Q$ describes the binary collisions between particles:
\begin{equation}\label{eqn: Q}
\begin{split}
   & Q(F_1,F_2)(v)=Q_{\text{gain}}-Q_{\text{loss}} = Q_{\text{gain}}(F_1,F_2)-\nu(F_1)F_2 \\
    & :=\iint_{\mathbb{R}^3\times \mathbb{S}^2} B(v-u,\omega)F_1(u')F_2(v') d\omega du-F_2(v)\left(\iint_{\mathbb{R}^3\times \mathbb{S}^2} B(v-u,\omega)F_1(u) d\omega du\right)\,.
\end{split}
\end{equation}
In the collision process, we assume the energy and momentum are conserved. We denote the post-velocities:
\begin{equation}\label{eqn: u' v'}
u'=u-[(u-v)\cdot \omega]\omega ,\quad \quad v'=v+[(u-v)\cdot \omega]\omega\,,
\end{equation}
then they satisfy:
\begin{equation}\label{eqn: conservation}
  u'+v'=u+v\,,\quad |u'|^2+|v'|^2=|u|^2+|v|^2\,.
\end{equation}
In equation~\eqref{eqn: Q}, $B$ is called the collision kernel which is given by
\[
B(v-u,\omega)=|v-u|^{\mathcal{K}}q_0(\frac{v-u}{|v-u|}\cdot \omega)\,,\quad\text{with}\quad -3< \mathcal{K}\leq 1\,,\quad 0\leq q_0(\frac{v-u}{|v-u|}\cdot \omega)\leq C\Big|\frac{v-u}{|v-u|}\cdot \omega\Big|\,.
\]

To describe the boundary condition for $F$, we denote the collection of coordinates on phase space at the boundary:
\[\gamma:=\{(x,v)\in \partial \Omega\times \mathbb{R}^3\}.\]
And we denote $n=n(x)$ as the outward normal vector at $x\in \Omega$. We split the boundary coordinates $\gamma$ into the incoming ($\gamma_-$) and the outgoing ($\gamma_+$) set:
\[\gamma_\mp:=\{(x,v)\in \partial \Omega\times \mathbb{R}^3 :n(x)\cdot v\lessgtr 0\}\,.\]
The boundary condition determines the distribution on $\gamma_-$, and shows how particles back-scattered into the domain. In our model, we use the scattering kernel $R(u \rightarrow v;x,t)$:
\begin{equation}\begin{split}\label{eqn:BC}
&F(t,x,v) |n(x) \cdot v|= \int_{n(x) \cdot u>0}
R(u \rightarrow v;x,t) F(t,x,u)
\{n(x) \cdot u\} d u, \quad \text{ on }\gamma_-\,.
\end{split}
\end{equation}
Physically, $R(u\to v;x,t)$ represents the probability of a molecule striking in the boundary at $x\in\partial\Omega$ with velocity $u$, and to be sent back to the domain with velocity $v$ at the same location $x$ and time $t$. There are many models for it. In~\cite{CIP,CL} Cercignani and Lampis proposed a generalized scattering kernel that encompasses pure diffusion and pure reflection molecules via two accommodation coefficients $r_\perp$ and $r_\parallel$. Their model writes:
\begin{equation}\label{eqn: Formula for R}\begin{split}
&R(u \rightarrow v;x,t)\\
:=& \frac{1}{r_\perp r_\parallel (2- r_\parallel)\pi/2} \frac{|n(x) \cdot v|}{(2T_w(x))^2}
\exp\left(- \frac{1}{2T_w(x)}\left[
\frac{|v_\perp|^2 + (1- r_\perp) |u_\perp|^2}{r_\perp}
+ \frac{|v_\parallel - (1- r_\parallel ) u_\parallel|^2}{r_\parallel (2- r_\parallel)}
\right]\right)\\
& \times  I_0 \left(
 \frac{1}{2T_w(x)}\frac{2 (1-r_\perp)^{1/2} v_\perp u_\perp}{r_\perp}
\right),
\end{split}
\end{equation}
where $T_w(x)$ is the wall temperature for $x\in \partial \Omega$ and
\begin{equation*}
I_0 (y) := \pi^{-1} \int^{\pi}_0e^{y \cos \phi } d \phi\,.
\end{equation*}
In the formula, $v_\perp$ and $v_\parallel$ denote the normal and tangential components of the velocity respectively:
   \begin{equation}\label{eqn: def of vperppara}
   v_\perp= v\cdot n(x)\,,\quad v_\parallel = v- v_\perp n(x)\,.
\end{equation}
Similarly $u_\perp= u\cdot n(x)$ and $u_\parallel = u- u_\perp n(x)$.

There are a few properties the Cercignani-Lampis(C-L) model satisfies, including:
\begin{itemize}
\item the reciprocity property:
  \begin{equation}\label{eqn: reciprocity}
    R(u\to v;x,t)=R(-v\to -u;x,t) \frac{e^{-|v|^2/(2T_w(x))}}{e^{-|u|^2/(2T_w(x))}}\frac{|n(x)\cdot v|}{|n(x)\cdot u|}\,,
  \end{equation}
\item the normalization property(see the proof in appendix)
\begin{equation}\label{eqn: normalization}
\int_{n(x)\cdot v<0} R(u\to v;x,t) dv=1\,.
\end{equation}
\end{itemize}

The normalization~\eqref{eqn: normalization} property immediately leads to null-flux condition for $F$:
\begin{equation}\label{eqn: Null flux condition}
  \int_{\mathbb{R}^3}F(t,x,v)\{n(x)\cdot v\}dv=0\,,\quad \text{for }x\in \partial\Omega.
\end{equation}
This condition guarantees the conservation of total mass:
\begin{equation}\label{eqn: Mass conservation}
  \int_{\Omega\times \mathbb{R}^3}F(t,x,v)dvdx=\int_{\Omega\times \mathbb{R}^3}F(0,x,v)dvdx \text{ for all }t\geq 0\,.
\end{equation}

\begin{remark}The C-L model is an extension of the following classical diffuse boundary condition. The distribution function and scattering kernel are given by:

\begin{equation}\label{eqn: diffuse}
 F(t,x,v)= \frac{2}{\pi (2T_w(x))^2}e^{-\frac{|v|^2}{2T_w(x)}}\int_{n(x)\cdot u>0} F(t,x,u)\{n(x)\cdot u\}du \text{ on }(x,v)\in\gamma_-,
\end{equation}
\[R(u\to v;x,t)=\frac{2}{\pi (2T_w(x))^2}e^{-\frac{|v|^2}{2T_w(x)}} |n(x)\cdot v|.\]
It corresponds to the scattering kernel in~\eqref{eqn: Formula for R} with $r_\perp=1,r_\parallel=1$.

Other basic boundary conditions can be considered as a special case with singular $R$: specular reflection boundary condition:
\[F(t,x,v)=F(t,x,\mathfrak{R}_xv) \text{ on }(x,v)\in \gamma_-,  \quad \mathfrak{R}_xv=v-2n(x)(n(x)\cdot v),\]
\[R(u\to v;x,t)=\delta(u-\mathfrak{R}_xv),\]
where $r_\perp=0,r_\parallel=0$.

Bounce-back reflection boundary condition:
\[ F(t,x,v)=F(t,x,-v) \text{ on } (x,v)\in \gamma_-,\]
\[R(u\to v;x,t)=\delta(u+v),\]
where $r_\perp=0,r_\parallel=2$.

\end{remark}

Here we mention the Maxwell boundary condition, which is another classical model describes the intermediate reflection law. The scattering kernel is given by the convex combination of the diffuse and specular scattering kernel:
\[R(u\to v)= c\frac{2}{\pi (2T_w(x))^2}e^{-\frac{|v|^2}{2T_w(x)}} |n(x)\cdot v|+ (1-c)\delta(u-\mathfrak{R}_xv),\quad      0\leq c\leq 1.\]
Compared with the C-L boundary condition, the Maxwell boundary condition does not cover the combination with the bounce back boundary condition. Such combination is covered in the C-L boundary condition with $r_\parallel>1$. Moreover, the C-L boundary condition represents a smooth transition from the diffuse to the specular. The Maxwell boundary condition represents the convex combination of the Maxwellian and the dirac $\delta$ function. Here we show the graphs for both boundary condition in the two dimension for comparison. We assume the particles are moving towards the boundary with velocity $u=(u_\parallel,u_\perp)=(2,-2)$, thus
the boundary condition is given by
\[\big[F(t,x,v)|n(x)\cdot v|\big]\Big|_{\gamma_-}=\int_{n(x)\cdot u>0}R(u\to v) \delta\Big(u-(2,-2)\Big) |n(x)\cdot u|du.\]
Then the distribution function $F(t,x,v)|_{\gamma_-}$ for both boundary condition can be viewed as the following graphs:
\begin{figure}[H]
\begin{minipage}[t]{0.45\linewidth}
\centering
\includegraphics[width=\textwidth]{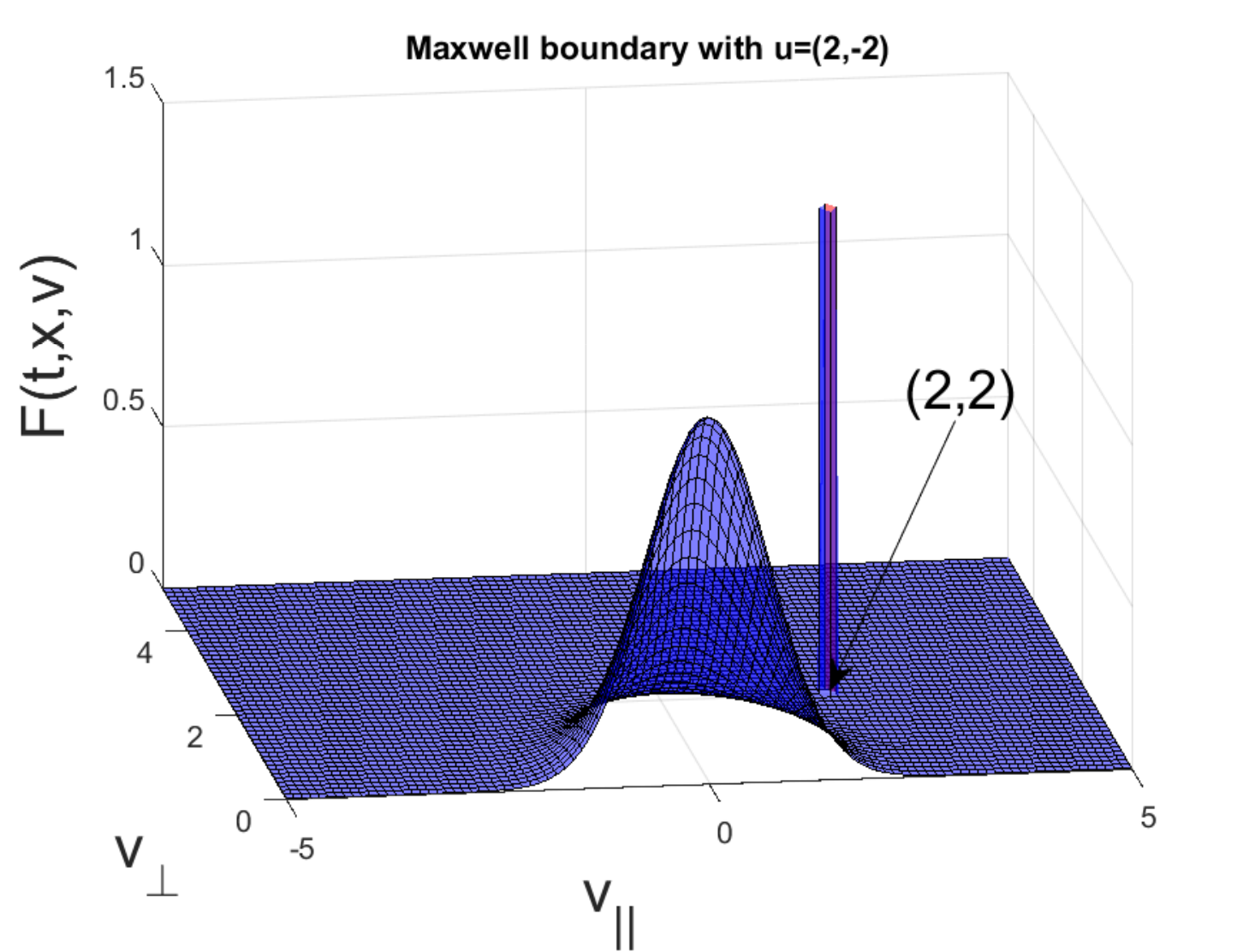}
\caption{Maxwell boundary condition with $c=1/2$.}
\label{fig:Maxwell}
\end{minipage}
\begin{minipage}[t]{0.45\linewidth}
\centering
\includegraphics[width=\textwidth]{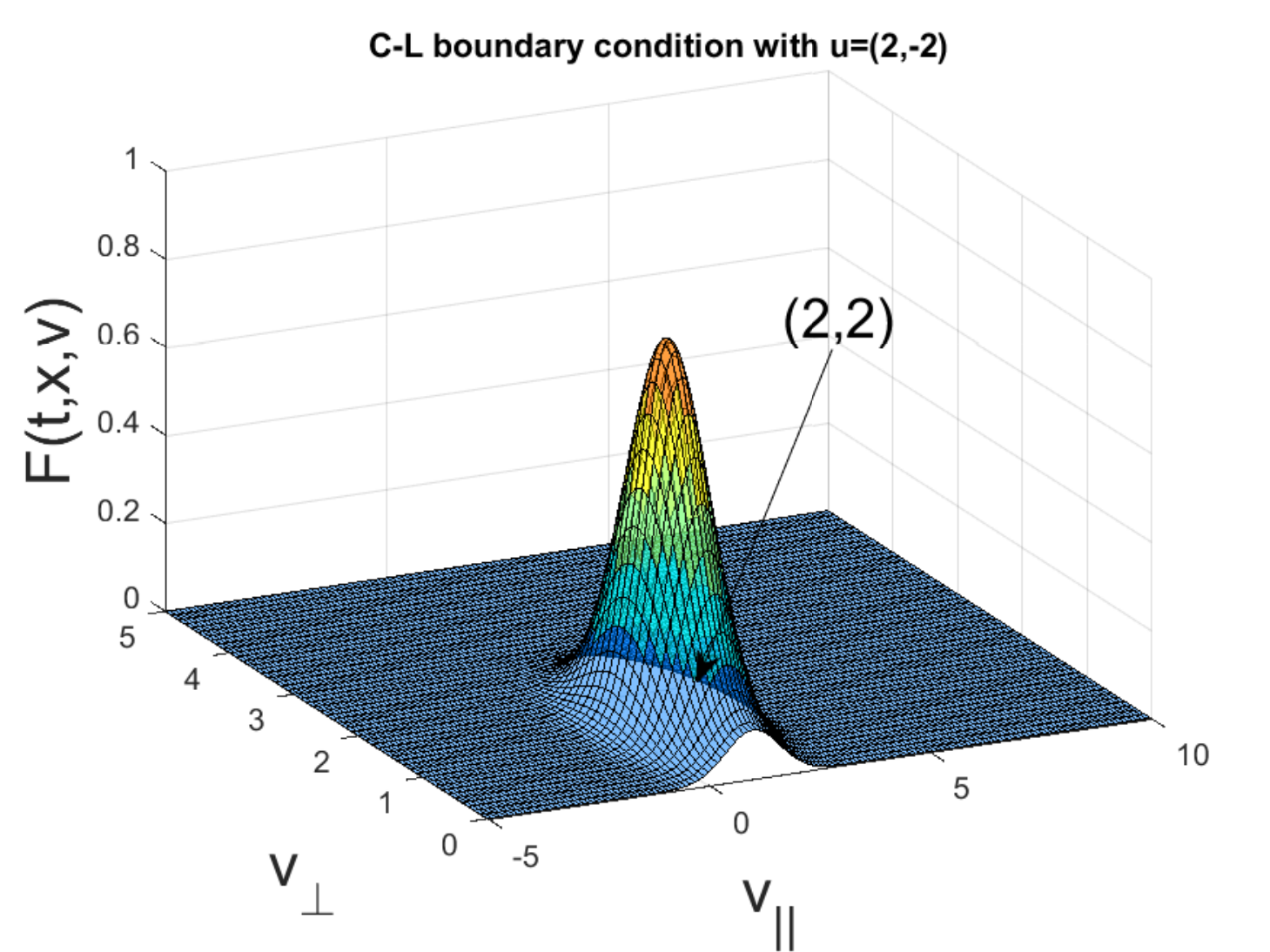}
\caption{C-L boundary condition with $r_\perp=r_\parallel=1/2$.}
\label{fig:C-L0.5}
\end{minipage}
\end{figure}
Moreover, we show the graphs for the distribution function $F|_{\gamma_-}$ with C-L boundary condition with smaller accommodation coefficients.
\begin{figure}[H]
\begin{minipage}[t]{0.45\linewidth}
\centering
\includegraphics[width=\textwidth]{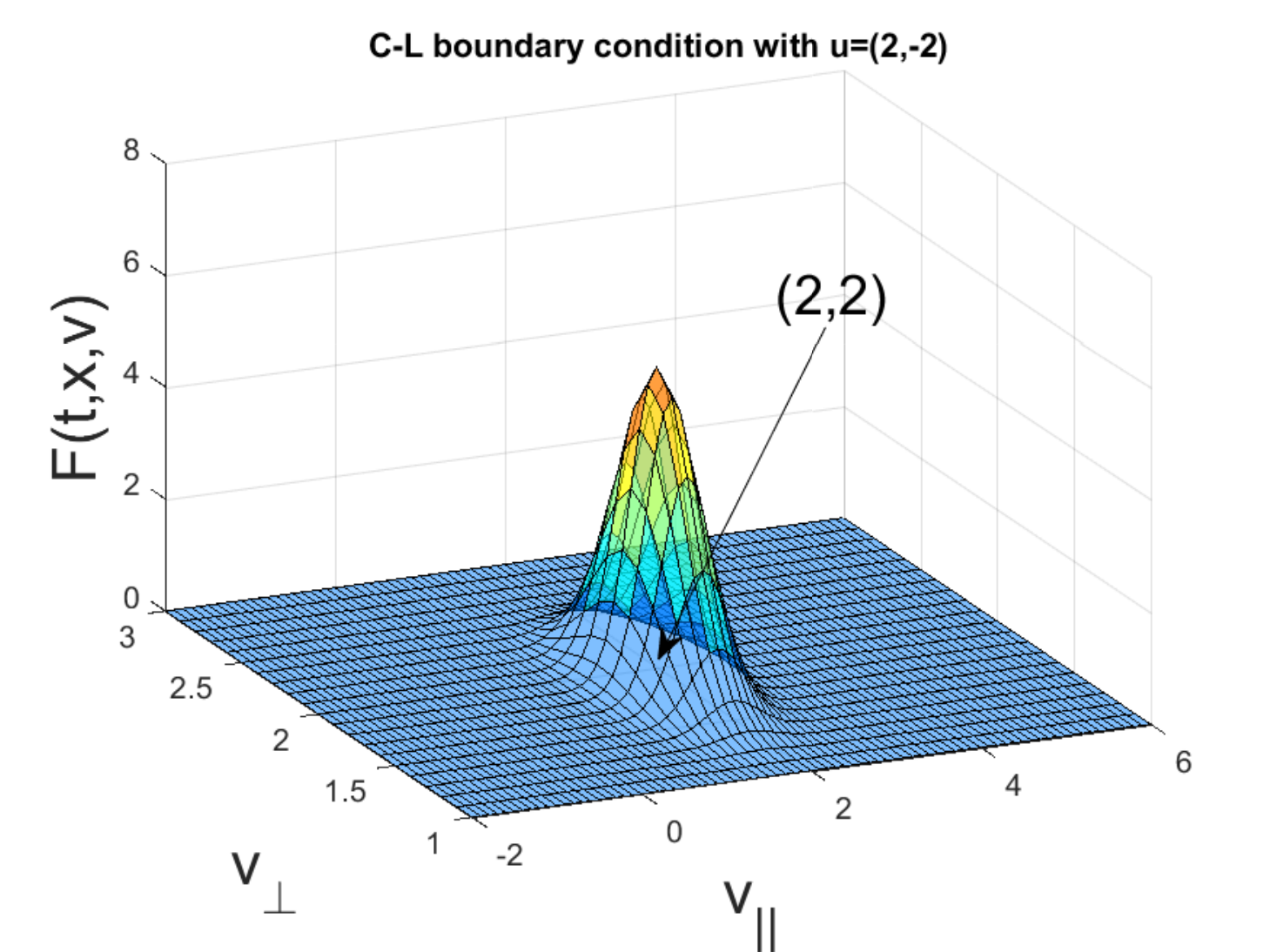}
\caption{C-L boundary condition with $r_\perp=r_\parallel=1/10$.}
\label{fig:C-L0.1}
\end{minipage}
\begin{minipage}[t]{0.45\linewidth}
\centering
\includegraphics[width=\textwidth]{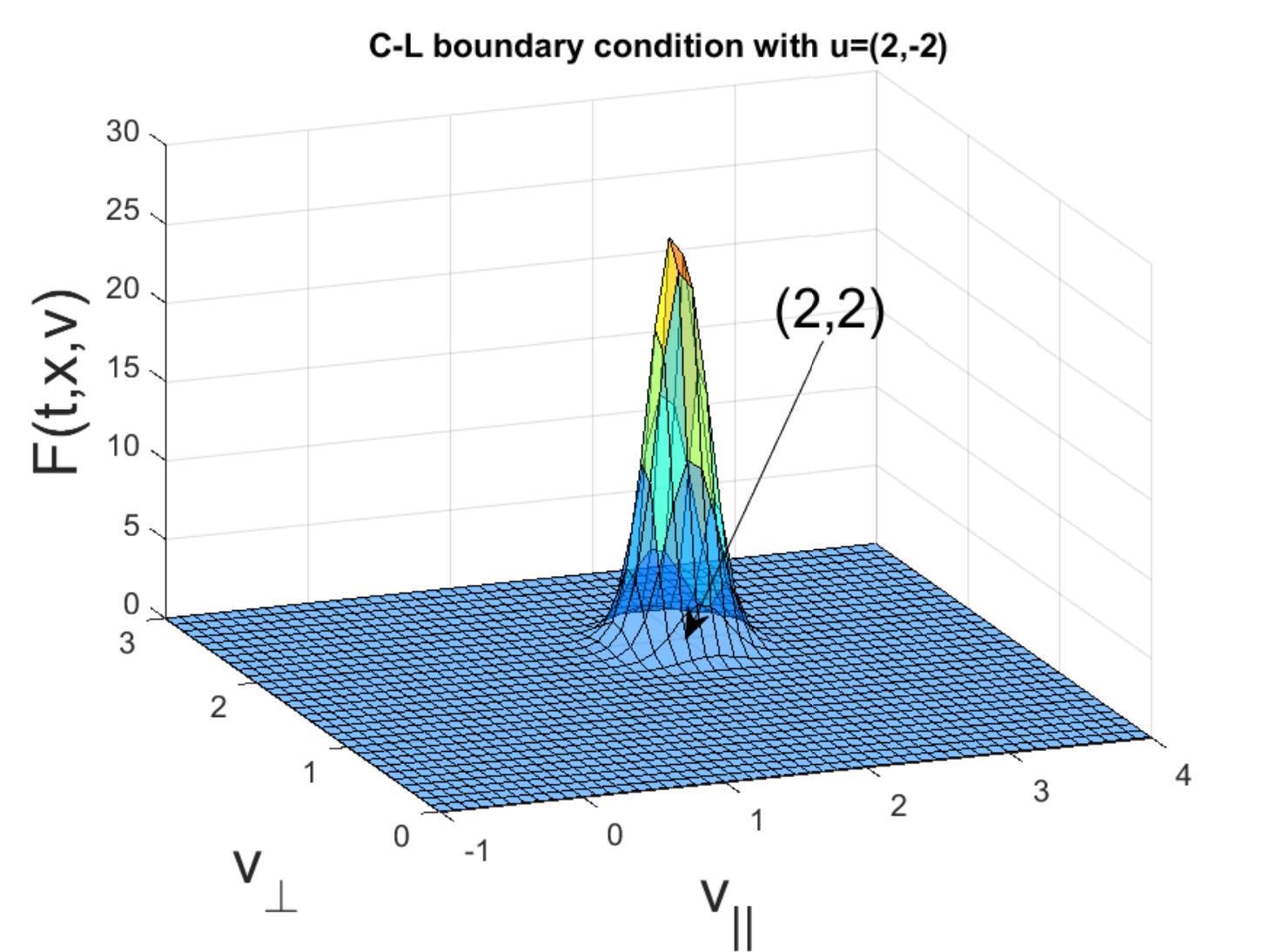}
\caption{C-L boundary condition with $r_\perp=r_\parallel=1/30$.}
\label{fig:C-L0.03}
\end{minipage}
\end{figure}
Figure \ref{fig:C-L0.5} shows a smoother transition since the particles begin to concentrate toward to the point $(2,2)$. Meanwhile Figure \ref{fig:Maxwell} represents the phenomena that half particles are specular reflected and half particles are diffusive. When we take smaller accommodation coefficient, Figure \ref{fig:C-L0.1} and Figure \ref{fig:C-L0.03} demonstrate that the distribution function $F(t,x,v)|_{\gamma_-}$ gradually concentrate on $(2,2)$. Moreover, the $z$-coordinate shows that the C-L scattering kernel indeed tends to a dirac $\delta$ function as the accommodation coefficients become smaller.

Due to the generality of the C-L model, it has been vastly used in many applications. There are other derivations of C-L model besides the original one, and we refer interested readers to~\cite{C,CIP,CC}. Also there have been many application of this model in recent years, on the rarefied gas flow in~\cite{KB,KB2,SF1,SF2,SF3}; extension to the gas surface interaction model in fluid dynamics~\cite{L,L2,WR}; on the linearized Boltzmann equation in~\cite{Gar,SI,LS,CS}; on S-model kinetic equation in~\cite{CES} etc.


%

\subsection{Main result}

We assume that the domain is $C^2$. Denote the maximum wall temperature:
\begin{equation}\label{eqn: T}
T_M:=\max\{T_w(x)\}<\infty\,.
\end{equation}
Define the global Maxwellian using the maximum wall temperature:
\begin{equation}\label{eqn: def for weight}
\mu:=e^{-\frac{|v|^2}{2T_M}}\,,
\end{equation}
and weight $F$ with it: $F=\sqrt{\mu}f$, then $f$ satisfies
\begin{equation}\label{equation for f}
  \partial_t f+v\cdot \nabla_x f=\Gamma(f,f)\,,
\end{equation}
where the collision operator becomes:
\begin{equation}\label{Def: Gamma}
\Gamma(f_1,f_2)=\Gamma_{\text{gain}}(f_1,f_2)-\nu(F_1)F_2/\mu=\frac{1}{\sqrt{\mu}}Q_{\text{gain}}(\sqrt{\mu}f_1,\sqrt{\mu}f_2)-\nu(F_1)f_2\,.
\end{equation}

By the reciprocity property~\eqref{eqn: reciprocity}, the boundary condition for $f$ becomes, for $(x,v)\in \gamma_-$,
\[f(t,x,v)|n(x)\cdot v|=\frac{1}{\sqrt{\mu}}\int_{n(x)\cdot u>0}   R(u\to v;x,t)  f(t,x,u)\sqrt{\mu(u)}\{n(x)\cdot u\}du\]
\[=\frac{1}{\sqrt{\mu}}\int_{n(x)\cdot u>0}   R(-v\to -u;x,t) \frac{e^{-|v|^2/(2T_w(x))}}{e^{-|u|^2/(2T_w(x))}}  f(t,x,u)\sqrt{\mu(u)}\frac{|n(x)\cdot v|}{|n(x)\cdot u|}\{n(x)\cdot u\}du.\]
Thus
\begin{equation}\label{eqn:C-L boundary condition in pro measure}
f(t,x,v)|_{\gamma_-}=e^{[\frac{1}{4T_M}-\frac{1}{2T_w(x)}]|v|^2}\int_{n(x)\cdot u>0} f(t,x,u)e^{-[\frac{1}{4T_M}-\frac{1}{2T_w(x)}]|u|^2}d\sigma(u,v).
\end{equation}
Here we denote
\begin{equation}\label{eqn:probability measure}
d\sigma(u,v):=R(-v\to -u;x,t)du,
\end{equation}
the probability measure in the space $\{(x,u),n(x)\cdot u>0\}$ (well-defined due to~\eqref{eqn: normalization}).

Denote
\begin{equation}\label{Def: w_theta}
  w_{\theta}:=e^{\theta |v|^2},
\end{equation}
\begin{equation}\label{eqn: langle v rangle}
  \langle v\rangle:=\sqrt{|v|^2+1}.
\end{equation}

	\begin{theorem}\label{local_existence}

Assume $\Omega \subset \mathbb{R}^3$ is bounded and $C^2$. Let $0< \theta <\frac{1}{4T_M}$. Assume
\begin{equation}\label{eqn: r condition}
  0<r_\perp\leq 1,\quad 0<r_\parallel<2\, ,
\end{equation}
\begin{equation}\label{eqn: Constrain on T}
\frac{\min(T_w(x))}{T_M}>\max\Big(\frac{1-r_\parallel}{2-r_\parallel},\frac{\sqrt{1-r_\perp}-(1-r_\perp)}{r_\perp}\Big),
\end{equation}
where the $T_M$ is defined in~\eqref{eqn: T}.

If $F_0= \sqrt{\mu}f_0\geq 0$ and $f_0$ satisfies the following estimate:
\begin{equation}\label{eqn: w f_0}
\| w_\theta f_0 \|_\infty		< \infty,
\end{equation}
then there exists a unique solution $F(t,x,v) =  \sqrt{\mu} f(t,x,v)\geq 0$ to~\eqref{eqn: VPB equation} and~\eqref{eqn:BC} in $[0, t_\infty] \times \O \times \R^3$ with
\[t_\infty=t_\infty(\Vert w_\theta f_0\Vert_\infty,r_\perp,r_\parallel,\theta,T_M,\min\{T_w(x)\},\Omega).\]
Moreover, the solution $F=\sqrt{\mu}f$ satisfies
		\Be\begin{split}\label{infty_local_bound} \sup_{0 \leq t \leq t_\infty}
			\| w_{\theta}e^{-|v|^2 t} f  (t) \|_{\infty}
			\lesssim \| w_\theta f_0 \|_\infty .
		\end{split}\Ee

		\end{theorem}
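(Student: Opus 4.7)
The plan is to construct $f$ as the limit of a Picard-type iteration and to close an $L^\infty$ estimate in the weighted norm $\|w_\theta e^{-|v|^2 t}\cdot\|_\infty$ on a short interval $[0,t_\infty]$. I would set $f^0=f_0$ and define $f^{\ell+1}$ as the solution of the linear transport-damping problem
\[\partial_t f^{\ell+1}+v\cdot\nabla_x f^{\ell+1}+\nu(\sqrt{\mu} f^\ell)f^{\ell+1}=\Gamma_{\text{gain}}(f^\ell,f^\ell),\qquad f^{\ell+1}|_{t=0}=f_0,\]
coupled to the Cercignani--Lampis boundary condition~\eqref{eqn:C-L boundary condition in pro measure} imposed on $f^{\ell+1}$ itself. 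Since $\nu\geq 0$ and $\Gamma_{\text{gain}}$ maps nonnegative functions to nonnegative functions, the iteration preserves $F^\ell=\sqrt{\mu} f^\ell\geq 0$ provided $F_0\geq 0$.

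For each $\ell$ I would solve the linear problem by integrating along the straight characteristic $X(s;t,x,v)=x-(t-s)v$. The trajectory reaches $\partial\Omega$ at a first backward hitting time $t^1_{\mathbf{b}}$, where~\eqref{eqn:C-L boundary condition in pro measure} expresses $f^{\ell+1}(t^1_{\mathbf{b}},x^1_{\mathbf{b}},v)$ as an integral of the outgoing trace $f^{\ell+1}(t^1_{\mathbf{b}},x^1_{\mathbf{b}},u_1)$ against $d\sigma(u_1,v)$ times the weight factor $\exp([\tfrac{1}{4T_M}-\tfrac{1}{2T_w(x^1_{\mathbf{b}})}](|v|^2-|u_1|^2))$. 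Iterating this substitution $k$ times produces a $k$-fold integral over the product probability measure $\prod_{j=1}^k d\sigma(u_j,v_{j-1})$ (with $v_0=v$), picking up along the way the initial contribution $f_0$, a sum of $\Gamma_{\text{gain}}(f^\ell,f^\ell)$ terms along each leg, and a remainder term carrying $f^{\ell+1}$ at the $k$-th hit.

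The main obstacle, and the step where the hypotheses~\eqref{eqn: r condition}--\eqref{eqn: Constrain on T} are essential, is to bound the weighted boundary operator uniformly in $k$. After multiplying by $w_\theta(v)$, the single-reflection kernel becomes
\[e^{[\theta+\frac{1}{4T_M}-\frac{1}{2T_w(x)}](|v|^2-|u|^2)}\,d\sigma(u,v),\]
and using the explicit formula~\eqref{eqn: Formula for R} the combined exponent is a quadratic form in $(u_\perp,u_\parallel)$ with $v$-dependent coefficients. A direct computation identifies the condition for this quadratic form to remain negative-definite in $u$ with an $L^\infty$-controlled dependence on $v$ precisely with~\eqref{eqn: Constrain on T} together with $\theta<\frac{1}{4T_M}$. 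I would then implement the decomposition alluded to in the abstract, splitting the $u$-integral over $\{|u|\leq N\}$ and $\{|u|>N\}$: on the small-velocity piece the weight factor is uniformly bounded and the subsequent outgoing characteristic takes a time $\gtrsim\mathrm{diam}(\Omega)^{-1}/N$ before returning to $\partial\Omega$, so after $\lesssim N t_\infty$ bounces the chain escapes $[0,t_\infty]$ and reduces to initial data; on the large-velocity piece the extra factor $e^{-|v|^2 t}$ present in the norm dominates $e^{\theta|v|^2}$ whenever $N^2 t_\infty\gg\theta$, producing the required smallness. Combining these over $k$ reflections yields an estimate of the form
\[\sup_{0\leq s\leq t}\|w_\theta e^{-|v|^2 s}f^{\ell+1}(s)\|_\infty\lesssim \|w_\theta f_0\|_\infty + t_\infty\big(1+\sup_{0\leq s\leq t}\|w_\theta f^\ell(s)\|_\infty\big)^2.\]

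Finally, with the uniform bound~\eqref{infty_local_bound} in hand, I would obtain convergence by running the same estimate on $f^{\ell+1}-f^\ell$: the quadratic structure of $\Gamma$ supplies a prefactor $t_\infty\cdot\sup_\ell\|w_\theta f^\ell\|_\infty$ that is $<1/2$ after shrinking $t_\infty$, giving a contraction in the weighted norm. The limit $f$ solves~\eqref{equation for f} with boundary condition~\eqref{eqn:C-L boundary condition in pro measure}, inherits~\eqref{infty_local_bound}, uniqueness follows from the same difference estimate applied to two solutions, and $F\geq 0$ survives the limit because each $F^\ell$ is nonnegative by construction.
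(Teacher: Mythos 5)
The overall framework you propose—Picard iteration, Duhamel representation along characteristics, a $k$-fold boundary integral, and a decomposition of the outgoing velocity space to prove smallness after many bounces—is the same framework the paper uses. However, the crucial smallness step is wrong, and the gap is exactly the one the paper identifies as the central new difficulty for Cercignani--Lampis.

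You propose handling the large-velocity piece $\{|u|>N\}$ of the bounce integral by arguing that the factor $e^{-|v|^2 t}$ in the norm dominates $e^{\theta|v|^2}$. This fails for two reasons. First, when one expands $h^{m+1}$ backward through repeated reflections, the time-dependent weights on the intermediate velocities $v_j$ cancel identically: the damping $e^{-|v_j|^2(t_j-t_{j+1})}$ from $\nu^m\geq|v|^2$ exactly offsets the ratio $e^{-|v_j|^2(\theta-t_j)}/e^{-|v_j|^2(\theta-t_{j+1})}$ produced by the boundary condition (this is the computation leading to~\eqref{eqn: 6}). What survives on each intermediate $v_j$ is only $e^{[\frac{1}{2T_w(x_j)}-\frac{1}{2T_w(x_{j+1})}]|v_j|^2}\,d\sigma(v_j,v_{j-1})$, which carries no $t$-dependent Gaussian damping, so there is nothing left for you to balance against a large $|v_j|$ tail. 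Second, even if a damping weight had survived, the probability measure $d\sigma(v_j,v_{j-1})$ in~\eqref{eqn:probability measure},~\eqref{eqn: Formula for R} is a Gaussian in $v_j$ centered at the shifted location $(1-r_\parallel)v_{j-1,\parallel}$ (and analogously shifted in the normal component). If $v_{j-1}$ is large, the bulk of the mass in $v_j$ sits near a large velocity too, so the event $\{|v_j|>N\}$ is \emph{not} of small probability. The paper states this explicitly in the ``Difficulty and proof strategy'' section: the standard $\gamma_+^\delta$ decomposition from the diffuse case produces no small factor here.

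The paper's actual mechanism, which your proposal does not contain, is a dichotomy built around the quantity $|v_{j,\parallel}-\eta_{j,\parallel}v_{j-1,\parallel}|$ (and its normal analogue), where $\eta_{j,\parallel}$ is an effective shift coefficient strictly less than $1$ precisely under hypothesis~\eqref{eqn: Constrain on T}. If this quantity is large, that bounce contributes a genuine $O(\delta)$ factor (Lemma~\ref{Lemma: (2)}). If it is small but $|v_{j,\parallel}|$ is large, then $\eta_{j,\parallel}<1$ forces $|v_{j-1,\parallel}|>|v_{j,\parallel}|+\delta^{-1}$ (Lemma~\ref{Lemma:  (a)(c)}), so the speeds grow rapidly as one integrates backward; summing these increments against the Gaussian tail gives a super-polynomially small factor (Lemma~\ref{Lemma: Step3}). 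Your role for the hypothesis~\eqref{eqn: Constrain on T} (``negative-definiteness of the combined quadratic form'') is also not the one the proof uses: that condition is what makes $\eta<1$ in Step 1 of the proof of Lemma~\ref{lemma: t^k}, and $\theta<\frac{1}{4T_M}$ is instead used in Lemma~\ref{lemma: boundedness} to keep the inductively defined Gaussian widths positive. Finally, your lower bound ``time between bounces $\gtrsim \mathrm{diam}(\Omega)^{-1}/N$'' for bounded speeds is false for near-tangential directions; the paper's good set $\mathcal{V}_j^{\delta}$ must also require $|n(x_j)\cdot v_j|>\delta$, and the resulting bound is $|t_j-t_{j+1}|\gtrsim_\Omega\delta^3$.
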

		
\begin{remark}
In Theorem \ref{local_existence} the accommodation coefficient can be any number that does not correspond to the dirac $\delta$ case. Also we cover all the range for $\mathcal{K}$ in the collision kernel $B$ in~\eqref{eqn: Q}. We derive~\eqref{infty_local_bound} and existence using the sequential argument. Assumption~\eqref{eqn: w f_0} is used to obtain the estimate~\eqref{infty_local_bound} for the sequence solution, which is the key factor to prove the theorem.

\end{remark}

\begin{remark}
There has been a lot of studies for Boltzmann equation in many aspects, the global solution~\cite{G,G2,CKL}; regularity estimate~\cite{GKTT,GKTT2}; the steady solution~\cite{EGKM,EGKM2,Duan}.

So far we are only able to prove the local well-posedness with the C-L boundary condition. There are several obstacles to construct the global solution with the C-L boundary condition for arbitrary accommodation coefficient.

To obtain the global solution of the Boltzmann equation~\cite{G} developed the $L^2-L^\infty$ bootstrap and derive the time decay and continuous solution of the linearized Boltzmann equation with various boundary condition. In particular, for the diffuse boundary condition with constant wall temperature,~\cite{G} used the $L^2$ estimate on the boundary
\[\int_{n(x)\cdot u<0}f^2(t,x,u)|_{\gamma_-} |n(x)\cdot u|du\leq \int_{n(x)\cdot u>0}f^2(t,x,u)|n(x)\cdot u|du, \]
\begin{equation}\label{eqn: diffuse f}
\text{ with }f|_{\gamma_-}=c_\mu\sqrt{\mu}\int_{n(x)\cdot u>0}f(t,x,u)\sqrt{\mu}|n(x)\cdot u|du.
\end{equation}
Here $c_\mu$ is the normalization constant such that $c_\mu \sqrt{\mu}|n\cdot u|du$ is a probability measure. To be more specific, the diffuse boundary condition can be regarded as a projection $P_\gamma f=f|_{\gamma_-}$. Then
\[\int_{n(x)\cdot u>0}(f-P_\gamma f)^2|n(x)\cdot u|du=\int_{n(x)\cdot u>0}f^2 |n(x)\cdot u|du-\int_{n(x)\cdot u>0}P_\gamma f^2 |n(x)\cdot u|du\geq 0.\]
However, for the C-L boundary condition, such $L^2$ inequality does not work. We can not regard the boundary condition~\eqref{eqn:C-L boundary condition in pro measure} as a projection because of the new probability measure $d\sigma(u,v)$ in~\eqref{eqn:probability measure}.

Another method to obtain the global solution is to use the entropy inequality. \cite{G2} used the entropy inequality and the $L^1-L^\infty$ bootstrap to derive the bounded solution of the linearized Boltzmann equation with periodic boundary condition. To adapt the entropy method in bounded domain,~\cite{M} used the Jensen inequality for the \textit{Darroz\`{e}s-Guiraud information} with Maxwell boundary condition. To be more specific, we define $\mathcal{E}$ as the \textit{Darroz\`{e}s-Guiraud information}:
\[\mathcal{E}:=\int_{\gamma_+}h\Big(\frac{F}{c_\mu\mu}\Big)c_\mu\mu(u)|n(x)\cdot u|du-h\Big(\int_{\gamma_+}\frac{F}{c_\mu\mu}c_\mu\mu(u) |n(x)\cdot u|du \Big),\quad h(s)=s\log s.\]
Since $c_\mu\mu(u)|n(x)\cdot u|du$ is a probability measure then $\mathcal{E}\geq 0$ by the Jensen inequality and thus the entropy inequality follows. For the C-L boundary condition, such inequality does not work since the probability measure is given by $d\sigma(u,v)$~\eqref{eqn:probability measure}, which is different from $c_\mu\mu(u)|n(x)\cdot u|du$.
f
Even though the global solution is not available for arbitrary accommodation coefficient, we are able to construct the steady and global solution when the coefficients are closed to $1$. This means the we require the boundary condition to be closed to the diffuse boundary condition. We will discuss the steady solution in the following section.

\end{remark}


\subsection{}
Beside the local-in-time well-posedness, we can establish the stationary solution under some constraints. The steady problem is given as
\begin{equation}\label{eqn: Steady Boltzmann}
  v\cdot \nabla_x F=Q(F,F),\quad (x,v)\in \Omega \times \mathbb{R}^3
\end{equation}
with $F$ satisfying the C-L boundary condition.

We use the short notation $\mu_0$ to denote the global Maxwellian with temperature $T_0$,
\[\mu_0:=\frac{1}{2\pi (T_0)^2}\exp\Big(-\frac{|v|^2}{2T_0} \Big).\]
Denote $L$ as the standard linearized Boltzmann operator
\begin{equation}\label{eqn: L operator}
\begin{split}
   & Lf:=-\frac{1}{\sqrt{\mu_0}}\big[ Q(\mu_0,\sqrt{\mu_0}f)+Q(\sqrt{\mu_0}f,\mu_0)\big]=\nu(v)f-Kf \\
    & =\nu(v)f-\int_{\mathbb{R}^3}k(v,v_*)f(v_*)dv_*,
\end{split}
\end{equation}
with the collision frequency $\nu(v)\equiv \iint_{\mathbb{R}^3\times \mathbb{S}^2}B(v-v_*,w)\mu_0(v_*)dwdv_*\sim \{1+|v|\}^{\mathcal{K}}$ for $-3< \mathcal{K}\leq 1$. Finally we define
\begin{equation}\label{eqn: diffuse projection}
  P_\gamma f(x,v):=c_\mu\sqrt{\mu_0(v)}\int_{n(x)\cdot u>0}f(x,u)\sqrt{\mu_0(u)}(n(x)\cdot u)du,
\end{equation}
where $c_\mu$ is the normalization constant.

\begin{corollary}\label{Thm: steady solution}
For given $T_0>0$, there exists $\delta_0>0$ such that if
\begin{equation}\label{eqn: small pert condition}
\sup_{x\in \partial \Omega}|T_w(x)-T_0|<\delta_0,\quad \max\{|1-r_\perp|,|1-r_\parallel|\}<\delta_0,
\end{equation}
then there exists a non-negative solution $F_s=\mu_0+\sqrt{\mu_0}f_s\geq 0$ with $\iint_{\Omega\times \mathbb{R}^3}f_s\sqrt{\mu_0}dxdv=0$ to the steady problem~\eqref{eqn: Steady Boltzmann}. And for all $0\leq \zeta<\frac{1}{4+2\delta_0}$, $\beta>4$,
\[\Vert \langle v\rangle^{\beta}e^{\zeta |v|^2}f_s\Vert_\infty +|\langle v\rangle^\beta e^{\zeta|v|^2}f_s|_\infty \lesssim \delta_0\ll 1.\]
If $\mu_0+\sqrt{\mu_0}g_s$ with $\iint_{\Omega\times \mathbb{R}^3}g_s\sqrt{\mu_0}dxdv=0$ is another solution such that $\Vert \langle v\rangle^\beta g_s\Vert_\infty+|\langle v\rangle^{\beta}g_s|_\infty\ll 1$ for $\beta>4$, then $f_s\equiv g_s$.

\end{corollary}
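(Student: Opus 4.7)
The plan is to write $F_s = \mu_0+\sqrt{\mu_0}\,f_s$ and treat the boundary condition as a small perturbation of diffuse reflection at temperature $T_0$. Since $v\cdot\nabla_x\mu_0=0$ and $Q(\mu_0,\mu_0)=0$, the steady equation reduces to
\[
v\cdot\nabla_x f_s + L f_s = \Gamma(f_s,f_s),
\]
so the entire effect of the parameters $(T_w,r_\perp,r_\parallel)$ is pushed onto the boundary. Observing that the C-L kernel $R$ reduces to the diffuse kernel at temperature $T_0$ exactly when $(T_w,r_\perp,r_\parallel)=(T_0,1,1)$, I would Taylor expand in these three parameters to get a splitting $R=R_{\mathrm{diff},T_0}+R_{\mathrm{rem}}$ with $R_{\mathrm{rem}}$ pointwise bounded by $\delta_0\,e^{-c|u|^2}e^{-c|v|^2}$. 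The boundary condition then takes the form
\[
f_s\big|_{\gamma_-}=P_\gamma f_s+\mathcal{R}(f_s),
\]
with $|\langle v\rangle^\beta e^{\zeta|v|^2}\mathcal{R}(f_s)|_{\gamma_-}\lesssim \delta_0\,|\langle v\rangle^\beta e^{\zeta|v|^2} f_s|_{\gamma_+}$ for the allowed range of $\zeta$.

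Next I would analyse the linear problem $v\cdot\nabla_x h+Lh=g$ with $h|_{\gamma_-}=P_\gamma h+q$ under the zero-mass constraint $\iint h\sqrt{\mu_0}\,dv\,dx=0$. Following the standard $L^2$--$L^\infty$ framework, I would first prove an $L^2$ bound: the coercivity $\langle Lh,h\rangle\gtrsim \|(\mathbf{I}-\mathbf{P})h\|_\nu^2$, the Darroz\`es--Guiraud inequality for the diffuse projection $P_\gamma$, and the zero-mass condition (closing the hydrodynamic part by an EGKM-type contradiction argument) together give
\[
\|h\|_{L^2}+|h|_{L^2(\gamma_+,|n\cdot v|)}\lesssim \|g\|_{L^2}+|q|_{L^2(\gamma_-,|n\cdot v|)}.
\]
I would then iterate Duhamel along backward characteristics as in the proof of Theorem~\ref{local_existence}, exploiting the contraction produced by $P_\gamma$ at each bounce, to upgrade the estimate to
\[
\|\langle v\rangle^{\beta}e^{\zeta|v|^2}h\|_\infty+|\langle v\rangle^{\beta}e^{\zeta|v|^2}h|_\infty\lesssim \|\nu^{-1}\langle v\rangle^{\beta}e^{\zeta|v|^2}g\|_\infty+|\langle v\rangle^{\beta}e^{\zeta|v|^2}q|_\infty.
\]

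With this linear theory in hand I close the nonlinear argument by Picard iteration: define $f^{n+1}$ as the solution of the linear problem with $g=\Gamma(f^n,f^n)$ and $q=\mathcal{R}(f^n)$. The quadratic bound $\|\nu^{-1}\langle v\rangle^\beta e^{\zeta|v|^2}\Gamma(f,f)\|_\infty\lesssim \|\langle v\rangle^\beta e^{\zeta|v|^2}f\|_\infty^2$ combined with the $\delta_0$-smallness of $\mathcal{R}$ makes the iteration a contraction on the ball of radius $C\delta_0$ in the weighted norm, producing a fixed point $f_s$ that satisfies the stated bound. Positivity of $F_s=\mu_0+\sqrt{\mu_0}f_s$ is obtained by running Theorem~\ref{local_existence} on the Cauchy problem with initial datum $F_s$: the unique local solution is stationary, and the positivity propagated by Theorem~\ref{local_existence} yields $F_s\ge 0$. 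Uniqueness in the small class follows from subtracting the two equations and applying the linear estimate to $\phi=f_s-g_s$, whose source and boundary correction are $O(\delta_0)\|\phi\|$ so that $\phi\equiv 0$.

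The main obstacle is the $L^2$ coercivity step under the perturbed boundary condition. The Darroz\`es--Guiraud inequality is tailored to the exact diffuse kernel, so the remainder $\mathcal{R}(h)$ must be absorbed into it, and the closeness $\max(|1-r_\perp|,|1-r_\parallel|,|T_w-T_0|)<\delta_0$ is precisely what makes this possible. Equally delicate is the EGKM-type contradiction used to control the hydrodynamic part of $h$ by $(\mathbf{I}-\mathbf{P})h$ and the boundary data: the C-L perturbation breaks the exact mass/momentum/energy compatibility of $P_\gamma$, so the argument must be carried out quantitatively in $\delta_0$ to ensure the perturbed boundary form stays strictly dissipative.
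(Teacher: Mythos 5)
Your plan matches the paper's approach in essentially every structural detail: write the Cercignani--Lampis boundary operator as the diffuse projection $P_\gamma$ at reference temperature $T_0$ plus an $O(\delta_0)$ correction, apply the linear $L^\infty$ theory of \cite{EGKM} for the perturbed diffuse steady problem, and close a Picard iteration on a ball of radius $C\delta_0$. The main cosmetic difference is that the paper simply cites Proposition~4.1 of \cite{EGKM} (reproduced here as Proposition~\ref{proposition: Linfty of steady solution}) for the linear solvability, whereas you sketch re-deriving the $L^2$--$L^\infty$ bootstrap; and the paper realizes the perturbation structure via the explicit Gaussian $\mu_{x,r_\parallel,r_\perp}$ computed in Lemma~\ref{Lemma: mu in bc}, rather than an abstract Taylor expansion of $R$.

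There is one verification you skip that the paper treats as central and that cannot be omitted: Proposition~\ref{proposition: Linfty of steady solution} requires the boundary source to satisfy $\int_{\gamma_-} r\sqrt{\mu_0}\,d\gamma = 0$ exactly, not just $|r|\lesssim\delta_0$. Your pointwise Gaussian bound on $R_{\mathrm{rem}}$ does not by itself deliver this; in fact, without zero flux, the zero-mass linear problem is not even compatible when you integrate the equation against $\sqrt{\mu_0}$. The paper establishes this in Lemma~\ref{Prop: perturbation} by splitting $r=r_1+r_2(f_s)-P_\gamma f_s$ (with $r_1=(\mu_{x,r_\parallel,r_\perp}-\mu_0)/\sqrt{\mu_0}$) and verifying the vanishing flux via the normalization $\int R\,dv=1=\int R_{\mathrm{diff}}\,dv$ and Tonelli. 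Your decomposition has the same property implicitly, but you need to state and use it. Your positivity argument (invoking Theorem~\ref{local_existence} with $F_s$ as initial datum) also differs from the paper's, which instead runs the positive-preserving iteration in the proof of the dynamical stability corollary and transfers positivity to $F_s$ by uniqueness; your alternative is plausible but requires checking that $F_s=\sqrt{\mu}f_0$ satisfies the weight hypothesis $\|w_\theta f_0\|_\infty<\infty$ of Theorem~\ref{local_existence}, which is not automatic from $\|\langle v\rangle^\beta e^{\zeta|v|^2}f_s\|_\infty<\infty$ alone unless you pick $\theta$ compatibly with $T_0$ and $T_M$.
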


\begin{corollary}\label{Thm: dynamical stability} For $0< \zeta < \frac{1}{4+2\delta_0},$ set $\beta=0$, and for $%
\zeta=0,$ set $\beta>4$ where $\delta_0>0$ is in Corollary \ref{Thm: steady solution}. There exists $%
\lambda >0$ and $\varepsilon _{0}>0$, depending on $\delta _{0}$, such that
if $\iint_{\Omega\times\mathbf{R}^3} f_0\sqrt{\mu_0} = \iint_{\Omega\times%
\mathbf{R}^3} f_s\sqrt{\mu_0}=0,$ and if
\begin{equation}
\Vert \langle v\rangle ^{\beta }e^{\zeta |v|^{2}}[f(0)-f_{s}]\Vert _{\infty
}+|\langle v\rangle ^{\beta }e^{\zeta |v|^{2}}[f(0)-f_{s}]|_{\infty }\leq
\varepsilon _{0},  \label{epsilon0}
\end{equation}%
then there exists a unique non-negative solution {$F(t)=\mu_0 +\sqrt{\mu_0 }%
f(t)\geq 0$} to the dynamical problem~\eqref{eqn: VPB equation} with boundary condition~\eqref{eqn:BC},~\eqref{eqn: Formula for R}. And we have
\begin{eqnarray*}
&&\ \ \Vert \langle v\rangle ^{\beta }e^{\zeta |v|^{2}}[f(t)-f_{s}]\Vert
_{\infty }+|\langle v\rangle ^{\beta }e^{\zeta |v|^{2}}[f(t)-f_{s}]|_{\infty
} \\
&&\ \lesssim \ e^{-\lambda t}\big\{\Vert \langle v\rangle ^{\beta }e^{\zeta
|v|^{2}}[f(0)-f_{s}]\Vert _{\infty }+|\langle v\rangle ^{\beta }e^{\zeta
|v|^{2}}[f(0)-f_{s}]|_{\infty }\big\}.
\end{eqnarray*}%

\end{corollary}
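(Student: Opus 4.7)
My plan is to linearize around the steady solution $f_s$ from Corollary \ref{Thm: steady solution} and run an $L^2$--$L^\infty$ bootstrap in the spirit of Guo, using the smallness \eqref{eqn: small pert condition} to treat the C--L boundary as a perturbation of the pure-diffuse boundary. Setting $g(t):=f(t)-f_s$, the linearity of the C--L reflection operator in $F$ and the shared zero-mass constraint give
\[
\partial_t g + v\cdot\nabla_x g + L g \;=\; \Gamma(f_s,g)+\Gamma(g,f_s)+\Gamma(g,g),
\]
with $g|_{\gamma_-}$ obeying the C--L boundary condition \eqref{eqn:C-L boundary condition in pro measure} and $\iint_{\Omega\times\mathbb{R}^3} g\sqrt{\mu_0}\,dxdv\equiv 0$. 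The target is exponential decay of $\Vert w\,g(t)\Vert_\infty+|w\,g(t)|_\infty$ with $w=\langle v\rangle^\beta e^{\zeta|v|^2}$.

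The first step is linear $L^2$ exponential decay for the homogeneous problem $\partial_t g+v\cdot\nabla_x g+Lg=0$ with C--L boundary and vanishing mass. I would decompose the boundary operator as $P_\gamma+\mathcal{R}$, where $P_\gamma$ is the diffuse projection \eqref{eqn: diffuse projection} at temperature $T_0$ and $\mathcal{R}$ collects the discrepancies from $|T_w(x)-T_0|$ and $|1-r_\perp|+|1-r_\parallel|$. Expanding the kernel \eqref{eqn: Formula for R} around $r_\perp=r_\parallel=1$, $T_w\equiv T_0$ shows that, against the weight $e^{\zeta|v|^2}$ for $\zeta<\tfrac{1}{4+2\delta_0}$, the operator $\mathcal{R}$ has norm $O(\delta_0)$ on the boundary $L^2(\gamma,|n\cdot v|\,dv)$. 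Combined with the standard coercivity of $L$ modulo its five-dimensional hydrodynamic kernel (the macroscopic modes are dissipated via the usual test-function estimates on the fluid moments, with the density mode pinned by mass conservation) and the nonnegative boundary form $\int_{\gamma_+}(I-P_\gamma)g^2|n\cdot v|dv\geq 0$, the $\mathcal{R}$ contribution is absorbed provided $\delta_0$ is small, yielding a linear decay rate $\lambda_0>0$.

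The second step is the $L^2\to L^\infty$ upgrade along backward characteristics. At each boundary hit the trajectory is continued with the probability measure $d\sigma$ of \eqref{eqn:probability measure} and the repeated-interaction bookkeeping is performed as in Theorem \ref{local_existence}; the constraint $\zeta<\tfrac{1}{4+2\delta_0}$ is precisely the threshold ensuring that the Gaussian factor in \eqref{eqn: Formula for R} dominates $e^{\zeta|v|^2}$ after each reflection. This yields
\[
\|w\,g(t)\|_\infty \lesssim e^{-\lambda_0 t/2}\|w\,g(0)\|_\infty + \int_0^t e^{-\lambda_0 (t-s)/2}\,\|w\,\Gamma_{\mathrm{tot}}(s)\|_\infty\,ds,
\]
with $\Gamma_{\mathrm{tot}}=\Gamma(f_s,g)+\Gamma(g,f_s)+\Gamma(g,g)$. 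Combining this with the standard bilinear bound $\|w\,\Gamma(h_1,h_2)\|_\infty\lesssim \|wh_1\|_\infty\|wh_2\|_\infty$ and with $\|w f_s\|_\infty\lesssim \delta_0$ from Corollary \ref{Thm: steady solution}, a continuity argument with $\varepsilon_0\ll\delta_0\ll 1$ closes the iteration at any rate $\lambda\in(0,\lambda_0/2)$. Nonnegativity of $F(t)$ propagates from the sequential construction used in Theorem \ref{local_existence}, and uniqueness follows by applying the same estimate to the difference of two solutions.

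The principal obstacle, flagged in the remark after Theorem \ref{local_existence}, is the linear $L^2$ step: the C--L boundary condition is neither a projection (so Guo's diffuse argument fails directly) nor compatible with a Jensen-type Darroz\`es--Guiraud inequality (so the entropy route also fails). Condition \eqref{eqn: small pert condition} is exactly the mechanism that makes the defect between the C--L operator and $P_\gamma$ absorbable by the bulk dissipation of $L$; making this absorption quantitative simultaneously in the unweighted $L^2$ energy and in the weighted $L^\infty$ bootstrap, under a single smallness parameter $\delta_0$ controlling both $|T_w-T_0|$ and $|1-r_{\perp,\parallel}|$, is where I expect the main technical effort to lie.
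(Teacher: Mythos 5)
Your outline shares the paper's starting intuition (linearize around $f_s$, treat the C--L boundary as a $\delta_0$-small perturbation of diffuse, close a nonlinear iteration by smallness), but the core technical route is genuinely different, and the step you isolate as ``risky'' is in fact the step the paper deliberately avoids. The paper does not attempt a linear $L^2$ coercivity estimate for the C--L boundary at all. Instead, in the iteration for $f^{\ell+1}$ the boundary condition is kept \emph{purely diffuse}: $f^{\ell+1}_- = P_\gamma f^{\ell+1} + r$, with the entire C--L discrepancy $r = r_1 + r_2(f^{\ell}) - P_\gamma f^{\ell}$ pushed into the inhomogeneity and evaluated on the previous iterate $f^\ell$. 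The weighted $L^\infty$ exponential decay for the diffuse-plus-source dynamical problem is then quoted as a black box (Proposition~\ref{Prop: Linfty bound for dynamical}, i.e.\ Prop.~7.1 of [EGKM]); the only new ingredient the paper needs is the $L^\infty$ bound $|w_\varrho r|_\infty \lesssim \delta_0 + \delta_0|f^\ell|_\infty$ from Lemma~\ref{Prop: perturbation}, whose proof is explicit computation with the kernel \eqref{eqn: Formula for R}. This modular iteration is precisely what lets the C--L case piggyback on the diffuse theory without ever re-examining boundary coercivity.

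Your proposed linear $L^2$ step, by contrast, requires new analysis that is not in the paper and is not obviously available. For the diffuse boundary, Jensen gives $\int_{\gamma_-}|P_\gamma g|^2|n\cdot v|\,dv\leq\int_{\gamma_+}|g|^2|n\cdot v|\,dv$ with equality on the range of $P_\gamma$ — there is no slack along that direction — so an $O(\delta_0)$ perturbation $\mathcal{R}$ of the boundary operator can destroy the sign of the flux term unless it is compensated by the bulk dissipation of $L$ via a quantitative trace estimate, and this is not automatic (the paper's own remark after Theorem~\ref{local_existence} explains that neither Guo's $L^2$ projection argument nor the Darroz\`es--Guiraud entropy route extends to C--L). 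Similarly, your $L^2\to L^\infty$ upgrade would need to push exponential weights through the full C--L back-time cycles of Lemma~\ref{lemma: the tracjectory formula for f^(m+1)}, which is substantially more work than the paper's approach of invoking [EGKM] for diffuse and controlling $r$. So while your plan is plausible, as written it replaces a one-lemma reduction with an unproved coercivity estimate; to close the argument you would essentially have to redo a large part of the diffuse $L^2$--$L^\infty$ theory in the perturbed setting rather than cite it.
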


\begin{remark}
Different to the accommodation coefficient with almost no constraint in Theorem \ref{local_existence}, in Corollary \ref{Thm: steady solution}, Corollary \ref{Thm: dynamical stability} we need to restrict these two coefficients to be close to $1$ in~\eqref{eqn: small pert condition}. To be more specific, we require the C-L boundary to be close to the diffuse boundary condition.

In this paper we show the proof for the hard sphere case where $0\leq \mathcal{K} \leq 1$. We can establish the same result for the soft potential case( $-3< \mathcal{K}<0$ ) using the argument provided in~\cite{Duan}.

\end{remark}

\subsection{Difficulty and proof strategy}

For proving the local well-posedness we focus on establishing $L^\infty$ estimate. In particular, for the $L^\infty$ estimate we trace back along the characteristic until it hits the boundary or the initial datum. Thus we derive a new trajectory formula with C-L boundary condition in~\eqref{eqn:C-L boundary condition in pro measure}.
Before tracing back to $t=0$ there will be repeated interaction with the boundary, which creates a multiple integral due to the boundary condition~\eqref{eqn:BC}. We present the formula in Lemma~\ref{lemma: the tracjectory formula for f^(m+1)}.

To understand this multiple integral we define $v_k,v_{k-1},\cdots,v_1$ in Definition \ref{Def:Back time cycle}. The $v_i$ represents the integral variable at $i$-th interaction with the boundary. For the diffuse reflection~\eqref{eqn: diffuse} with constant wall temperature, the boundary condition for $f=F/\sqrt{\mu}$ is given by~\eqref{eqn: diffuse f}. Thus at the $i$-th interaction the boundary condition is given by
\[f(v_{i-1})=c_\mu\sqrt{\mu(v_{i-1})}\int_{n\cdot v_{i}>0}f(v_{i})\sqrt{\mu(v_{i})}|n\cdot v_{i}|dv_i.\]
If we further trace back $f(v_i)$ in the integrand along the trajectory until the next interaction we have
\[f(v_i)=c_\mu\sqrt{\mu(v_{i})}\int_{n\cdot v_{i+1}>0}f(v_{i+1})\sqrt{\mu(v_{i+1})}|n\cdot v_{i+1}|dv_{i+1}.\]
Thus the integral over $v_i$ becomes
\[\int_{n\cdot v_i>0}c_\mu \mu(v_i)|n\cdot v_i|dv_i.\]
The integrand for $v_i$ is symmetric for all $1\leq i<k$ and not affected by the other variables. Moreover, $c_\mu \mu(v_i)|n\cdot v_i|dv_i$ is probability measure. Thus we can apply Fubini's theorem to compute this multiple integral. But for the C-L boundary condition~\eqref{eqn:BC}~\eqref{eqn: Formula for R}, the integrand is a function of both $v$ and $u$, as a result the probability measure is not symmetric for $v_i$. We are not free to apply the Fubini's theorem, which brings difficulty in bounding the trajectory formula. To be more specific, we need to compute the integral with the fixed order $v_k,v_{k-1},\cdots v_1$. We start from the integral of $v_k$. By~\eqref{eqn:C-L boundary condition in pro measure}, the integral of $v_k$ is
\begin{equation}\label{eqn: proof strat}
 \int_{n(x)\cdot v_k>0}   e^{-[\frac{1}{4T_M}-\frac{1}{2T_w(x)}]|v_k|^2}  d\sigma(v_k,v_{k-1}).
\end{equation}
When $r_\perp,r_\parallel\neq 0$, unlike the diffuse case, we can not decompose $d\sigma(v_k,v_{k-1})$ in~\eqref{eqn:probability measure}~\eqref{eqn: Formula for R} into a product of a function of $v_k$ and a function of $v_{k-1}$. Thus the integral ends up with a function of $v_{k-1}$, which will be included as a part of the integral over $v_{k-1}$. This justifies that the order of the integral can not be changed. Also the integral of $v_i$ is affected by the variables $v_{i+1},v_{i+2},\cdots v_k$. Thus we have to compute the multiple integral with fixed order from $v_k$ to $v_1$.

In fact,~\eqref{eqn: proof strat} can be computed explicitly as $e^{c|v_{k-1}|^2}$( Lemma \ref{Lemma: abc},Lemma \ref{Lemma: perp abc} ) and thus the integral for the variable $v_{k-1}$ has exactly the same form as~\eqref{eqn: proof strat}. This allows us to inductively derive an upper bound for this multiple integral. We present the induction result in Lemma~\ref{lemma: boundedness}.

With an upper bound for the trajectory formula another difficulty in the $L^\infty$ estimate is the measure $\mathbf{1}_{\{t_k>0\}}$. We need to show that this measure is small when $k$ is large so that the $L^\infty$ estimate follows by bounding a finite fold integral.

For this purpose~\cite{G,CKL} decompose $\gamma_+$ into the subspace
\[\gamma_+^{\delta}=\{u\in \gamma_+:|n\cdot u|>\delta,|u|\leq \delta^{-1}\}.\]
For diffuse case~\eqref{eqn: diffuse} the boundary condition for $f$ is given by~\eqref{eqn: diffuse f}. We can derive that there can be only finite number of $v_j$ belong to $\gamma_+^\delta$ under the constraint that $t<\infty$. Meanwhile, by~\eqref{eqn: diffuse f} the integral over $\gamma_+\backslash \gamma_+^\delta$ is a small magnitude number $O(\delta)$. When $k$( times of interaction with boundary ) is large enough one can obtain a large power of $O(\delta)$. The smallness of the measure $\mathbf{1}_{\{t_k>0\}}$ follows by this large power.

However, for our C-L boundary condition, the integrand is given by ~\eqref{eqn:C-L boundary condition in pro measure}~\eqref{eqn: Formula for R}, which contains the term $e^{-|v_\parallel-(1-r_{\parallel})u_\parallel|^2}$ in~\eqref{eqn:probability measure}. If we apply the standard decomposition the integral over $\gamma_+\backslash \gamma_+^\delta$ is no longer a small number $O(\delta)$. This is because even $|v_\parallel|\gg 1$, $|v_\parallel-(1-r_{\parallel})u_\parallel|$ still depends on $u_\parallel$.

A key observation is that when $|v_\parallel|$ is large enough, if $|v_\parallel-(1-r_{\parallel})u_\parallel|<\delta^{-1}$, we can obtain $|u_\parallel|\geq |v_\parallel|+\delta$ using $1-r_\parallel<1$. We take $1-r_\parallel=1/2$ as example. If $|v_\parallel-\frac{1}{2}u_\parallel|<\delta^{-1}$, we take $|v_\parallel|\geq 3\delta^{-1}$. Then we have
\[\frac{1}{2}|u_\parallel|>|v_\parallel|-\delta^{-1}>\frac{1}{2}|v_\parallel|+\frac{1}{2}\delta^{-1},\quad |u_\parallel|>|v_\parallel|+\delta^{-1}.\]
For $1-r_\parallel\neq 1/2$, we can choose a different number that depends on $1-r_\parallel$ to keep this property.

Now we suppose the "bad" case $|v_\parallel-(1-r_\parallel)u_\parallel|<\delta^{-1}$ happens for a large amount of times. By the discussion above, for the multiple integral with order $v_{k},\cdots,v_1$ we get an extremely huge velocity $|v_i|$ with some $i<k$. When we compute the integral with $d\sigma(v_i,v_{i-1})$, once $|v_{i-1}|$ is small the result is extremely small. This will provide the key factor to cancel all the other growth terms and prove the smallness of the measure $\mathbf{1}_{\{t_k>0\}}$. The other one is the "good" case $|v_\parallel-(1-r_{\parallel})u_\parallel|>\delta^{-1}$. From~\eqref{eqn: Formula for R} we can conclude the integral under this condition is a small magnitude number $O(\delta)$. Thus we can obtain some small factors to prove the smallness in both cases. Since the integrand in $d\sigma(u,v)$ in~\eqref{eqn:probability measure}~\eqref{eqn: Formula for R} still contains the variable $u_\perp,v_\perp$, we also need to apply the decomposition for these variables. The decomposition is similar and we skip the discussion here. But we point out that since the integrand for $u_\perp$ involves the first type Bessel function $I_0$, we need some basic estimate to verify that the integral for $u_\perp$ has the same property as $v_\parallel,u_\parallel$. We put these estimates in the appendix.

Thus our new ingredient here is that we decompose the boundary term $\gamma_+$ into the subspace
\[\gamma_+^{\eta}=\{u\in \gamma_+: |n\cdot u|>\eta\delta,|u|\leq (\eta\delta)^{-1}\}.\]
Here $\eta$ is small number depends on the coefficient $r_\parallel$ to ensure $|u_\parallel|\geq |v_\parallel|+\delta^{-1}$ when $|v_\parallel-(1-r_\parallel)u_\parallel|<\delta^{-1}$. During computing the trajectory formula the integral involves the variable $T_w(x)$( the wall temperature on $x\in\partial \Omega$ in~\eqref{eqn: Formula for R} ). It affects the real value of the coefficient for $u_\parallel$( different to $1-r_\parallel$ ). This is the reason that we need to impose some constraint on the wall temperature, which is the condition~\eqref{eqn: Constrain on T} in Theorem \ref{local_existence}. We present the decomposition and detail in Lemma~\ref{lemma: t^k} and its proof.

The way to construct the stationary solution and the dynamical stability( Corollary \ref{Thm: steady solution} and Corollary \ref{Thm: dynamical stability} ) comes from the ideas in~\cite{EGKM,EGKM2}. They consider the diffuse boundary condition with a small fluctuation on the wall temperature. Thus it can be regarded as a perturbation around the diffuse boundary condition with constant temperature. For our C-L boundary condition, when $r_\perp$ and $r_\parallel$ are close to $1$, it can be also regarded as a perturbation. Thus we need to restrict the accommodation coefficient to have a small fluctuation around $1$. Then we need to verify the boundary condition satisfies the property as stated in Proposition 4.1 in~\cite{EGKM}( the condition~\eqref{eqn: g,r condition} in this paper ). Then we can follow the standard procedure provided in~\cite{EGKM} to prove Corollary \ref{Thm: steady solution} and Corollary \ref{Thm: dynamical stability}.

\subsection{Outline}
In section 2 we conclude Theorem \ref{local_existence} by proving the $L^\infty$ bound for the sequence $f^m$ as well as the existence and $L^\infty$ stability. In section 3, we conclude Corollary \ref{Thm: steady solution} and Corollary \ref{Thm: dynamical stability} by using the key propositions provided in~\cite{EGKM}. In the appendix we prove some necessary estimates.

\section{Local well-posedness}
We start with the construction of the following iteration equation, which is positive preserving as in~\cite{G,K}. Then equation is given by
\begin{equation}\label{eqn: Fm+1}
    \partial_t F^{m+1}+v\cdot \nabla_x F^{m+1}=Q_{\text{gain}}(F^m,F^m)-\nu(F^m)F^{m+1},\quad F^{m+1}|_{t=0}=F_0,
\end{equation}
with boundary condition
\[F^{m+1}(t,x,v)|n(x)\cdot v|=\int_{n(x)\cdot u>0}R(u\to v;x,t)F^{m}(t,x,u)\{n(x)\cdot u\}du.\]
For $m\leq 0$ we set
\[F^m(t,x,v)=F_0(x,v).\]

We pose $F^{m+1}=\sqrt{\mu}f^{m+1}$ and
\begin{equation}\label{eqn: hm+1}
h^{m+1}(t,x,v)=e^{(\theta-t)|v|^2} f^{m+1}(t,x,v).
\end{equation}
The equation for $h^{m+1}$ reads
\begin{equation}\label{eqn:formula of f^(m+1)}
\partial_t h^{m+1}+v\cdot \nabla_x h^{m+1}+\nu^mh^{m+1} =e^{(\theta-t)|v|^2} \Gamma_{\text{gain}}\left(\frac{h^m}{e^{(\theta-t)|v|^2}},\frac{h^m}{e^{(\theta-t)|v|^2}}\right),
\end{equation}
with boundary condition
\begin{equation}\label{eqn: BC for fm+1}
     h^{m+1}(t,x,v)=e^{(\theta-t)|v|^2} e^{[\frac{1}{4T_M}-\frac{1}{2T_w(x)}]|v|^2}\int_{n(x)\cdot u>0} h^m(t,x,u)e^{-[\frac{1}{4T_M}-\frac{1}{2T_w(x)}]|u|^2} e^{-(\theta-t)|u|^2}  d\sigma(u,v).
\end{equation}
Here
\begin{equation}\label{eqn: num}
  \nu^m=|v|^2+\nu(F^m)\geq |v|^2.
\end{equation}

We use this section to establish the $L^\infty$ estimate of the sequence $h^{m+1}$ and derive the existence and uniqueness of the equation~\eqref{eqn: VPB equation}. The $L^\infty$ estimate is given by the following proposition.

\begin{proposition}\label{proposition: boundedness}
Assume $h^{m+1}$ satisfies~\eqref{eqn: hm+1} with Cercignani-Lampis boundary condition. Also assume $\theta<\frac{1}{4T_M}$, $\frac{\min(T_w(x))}{T_M}>\max\Big(\frac{1-r_\parallel}{2-r_\parallel},\frac{\sqrt{1-r_\perp}-(1-r_\perp)}{r_\perp} \Big)$ and
\begin{equation}\label{}
  \Vert h_0(x,v)\Vert_{L^\infty}<\infty,
\end{equation}
If
\begin{equation}\label{eqn: fm is bounded}
  \sup_{i\leq m}\Vert h^i(t,x,v)\Vert_{L^\infty}\leq C_\infty \Vert h_0(x,v)\Vert_{L^\infty}, \quad t\leq t_\infty,
\end{equation}
then we have
\begin{equation}\label{eqn: L_infty bound for f^m+1}
\sup_{0\leq t\leq t_\infty}\Vert h^{m+1}(t,x,v)\Vert_{L^\infty} \leq C_\infty\Vert h_0(x,v)\Vert_{L^\infty}.
\end{equation}
Here $C_\infty$ is a constant defined in~\eqref{eqn: Cinfty} and
\begin{equation}\label{eqn: t_1}
t\leq t_{\infty}=t_{\infty}( \Vert h_0(x,v)\Vert_{L^\infty},T_M,\min\{T_w(x)\},\theta,r_\perp,r_\parallel,\Omega)\ll 1.
\end{equation}

\end{proposition}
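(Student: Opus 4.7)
The plan is to reduce the bound on $\Vert h^{m+1}\Vert_\infty$ to estimating a repeated-boundary-bounce trajectory formula obtained by running characteristics backward in time. Starting from the mild formulation of~\eqref{eqn:formula of f^(m+1)}, Duhamel's principle together with the C-L boundary condition~\eqref{eqn: BC for fm+1} yields, for each $(t,x,v)$, either a contribution from the initial data (when the backward characteristic exits $[0,t]\times\Omega$ through $\{t=0\}$) or a boundary term. Iterating the boundary condition through successive reflections produces a multiple integral $dv_k\,dv_{k-1}\cdots dv_1$ over outgoing velocities at the successive bounce points, each factor weighted by the C-L scattering kernel, together with a Duhamel contribution from $\Gamma_{\text{gain}}(h^{j},h^{j})$ between consecutive bounces. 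The induction hypothesis~\eqref{eqn: fm is bounded} enters at every $h^j$ term produced in this unfolding.

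Next I would bound the bounce product. Because $d\sigma(v_j,v_{j-1})$ couples the two velocities, the integrals must be performed in the fixed order $v_k,v_{k-1},\ldots,v_1$. The weight factors $e^{(\theta-t)|v_j|^2}e^{[\frac{1}{4T_M}-\frac{1}{2T_w(x)}]|v_j|^2}$ combine into an integrand of the form~\eqref{eqn: proof strat}. Here the condition $\theta<\frac{1}{4T_M}$ together with the constraint~\eqref{eqn: Constrain on T} on $T_w/T_M$ is exactly what guarantees that the Gaussian in $d\sigma$ dominates the extra exponential weight, so that an explicit computation of the resulting Gaussian integral (separately in the normal and tangential components, with the Bessel factor $I_0$ handled by appendix estimates) returns a factor of the same form $e^{c|v_{j-1}|^2}$ with $c$ bounded uniformly under iteration. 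Feeding this output into the next integral over $v_{j-1}$ and iterating downward in $j$ gives a uniform-in-$k$ bound on the full bounce product.

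The main obstacle is to make the indicator $\mathbf{1}_{\{t_k>0\}}$ small for $k$ large, so that only finitely many bounces contribute and the rest is controlled by $\Vert h_0\Vert_\infty$. For this I would decompose each outgoing slab into $\gamma_+^{\eta}=\{|n\cdot u|>\eta\delta,\ |u|\leq(\eta\delta)^{-1}\}$ and its complement, with $\eta$ small enough depending on $r_\parallel$ and on the effective tangential coefficient (which depends on $T_w/T_M$ through~\eqref{eqn: Constrain on T}) so that whenever $|v_\parallel-(1-r_\parallel)u_\parallel|<\delta^{-1}$ and $|v_\parallel|$ is large, we force $|u_\parallel|\geq |v_\parallel|+\delta^{-1}$. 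On the complement the Gaussian in $d\sigma$ yields an $O(\delta)$ factor per bounce; in the opposite regime the tangential velocities inflate along the chain and the next integral against $d\sigma(v_i,v_{i-1})$ becomes exponentially small once $|v_{i-1}|$ is moderate. The analogous analysis for the normal components, relying on the appendix Bessel-kernel estimates, produces the same smallness. Combined with standard lower bounds on $t_j-t_{j+1}$ for trajectories in the $C^2$ bounded domain $\Omega$ (so that a fixed, strictly positive number of non-grazing bounces must accumulate in any finite time interval), this per-bounce smallness lifts to $\mathbf{1}_{\{t_k>0\}}$ being arbitrarily small for $k$ chosen large enough.

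Finally I would close the estimate. On $\{t_k\leq 0\}$ the chain terminates at the initial data and is controlled by $\Vert h_0\Vert_\infty$ via the bounce product above; on $\{t_k>0\}$ the smallness just established, combined with~\eqref{eqn: fm is bounded}, makes the contribution a negligible fraction of $C_\infty\Vert h_0\Vert_\infty$. The Duhamel contribution from $\Gamma_{\text{gain}}$ between bounces is quadratic in $h^m$ and carries the exponential decay $e^{-\int \nu^m}$; by~\eqref{eqn: fm is bounded} it is bounded by $t_\infty\, C_\infty^2\,\Vert h_0\Vert_\infty^2$, which is absorbed into $C_\infty\Vert h_0\Vert_\infty$ by choosing $t_\infty$ small depending on $\Vert h_0\Vert_\infty$, $C_\infty$, $T_M$, $\min T_w$, $\theta$, $r_\perp$, $r_\parallel$, and $\Omega$, yielding~\eqref{eqn: L_infty bound for f^m+1}.
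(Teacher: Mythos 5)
Your proposal is correct and takes essentially the same approach as the paper: it reproduces the bounce/Duhamel formula of Lemma~\ref{lemma: the tracjectory formula for f^(m+1)}, the ordered iterated Gaussian bound of Lemma~\ref{lemma: boundedness} (returning factors $e^{c|v_{j-1}|^2}$ with $c$ uniformly controlled), and the velocity decomposition plus bounce-counting argument of Lemma~\ref{lemma: t^k} that makes $\mathbf{1}_{\{t_{k_0}>0\}}$ small for $k_0$ large, before closing with the quadratic $\Gamma_{\text{gain}}$ estimate and small $t_\infty$. The only minor imprecision is attributing the convergence of the Gaussian integrals partly to~\eqref{eqn: Constrain on T}; in the paper that constraint enters only in Lemma~\ref{lemma: t^k} (to keep the effective reflection coefficients $\eta_{i,\parallel},\eta_{i,\perp}<1$), while the Gaussian domination in Lemma~\ref{lemma: boundedness} follows from $\theta<\tfrac{1}{4T_M}$ alone; likewise the $\Gamma_{\text{gain}}$ contribution is bounded via an $(N^{-2}+Nt)$ split in $|v|$ rather than a bare factor of $t_\infty$, but the conclusion (absorb by small time) is the same.
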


\begin{remark}
The condition~\eqref{eqn: t_1} is important. The smallness of the time will be used in the proof many times. And the parameters in~\eqref{eqn: t_1} guarantee that the time only depends on the temperature, accommodation and the initial condition.

The Proposition \ref{proposition: boundedness} implies the uniform-in-$m$ $L^\infty$ estimate for $h^{m}(t,x,v)$,
\begin{equation}\label{eqn: theta'}
\sup_m\Vert h^m\Vert_\infty<\infty
\end{equation}

The strategy to prove Proposition \ref{proposition: boundedness} is to express $h^{m+1}$ along the characteristic using the C-L boundary condition. We present the formula in Lemma~\ref{lemma: the tracjectory formula for f^(m+1)}. We will use Lemma~\ref{lemma: boundedness} and Lemma~\ref{lemma: t^k} to bound the formula.
\end{remark}

We represent $h^{m+1}$ with the stochastic cycles defined as follows.
\begin{definition}\label{Def:Back time cycle}
Let $\left(X^1(s;t,x,v),v\right)$ be the location and velocity along the trajectory before hitting the boundary for the first time,
\begin{equation}\label{eqn: trajectory for Xm}
  \frac{d}{ds}\left(
                \begin{array}{c}
                  X^1(s;t,x,v) \\
                  v \\
                \end{array}
              \right)=\left(
                        \begin{array}{c}
                          v \\
                          0\\
                        \end{array}
                      \right).
\end{equation}
Therefore, from~\eqref{eqn: trajectory for Xm}, we have
\[X^1(s;t,x,v)=x-v(t-s).\]
Define the back-time cycle as
\[t_{1}(t,x,v)=\sup\{s<t:X^1(s;t,x,v)\in \partial \Omega\},\]
\[x_{1}(t,x,v)=X^1\left(t_1(t,x,v);t,x,v\right),\]
\[v_1\in \{v_1\in \mathbb{R}^3:n(x_1)\cdot v_1>0\}.\]
Also define
\[\mathcal{V}_1=\{v_1:n(x_1)\cdot v_1>0\},\quad x_1\in \partial \Omega.\]

Inductively, before hitting the boundary for the $k$-th time, define
\[t_k(t,x,v,v_1,\cdots,v_{k-1})=\sup\{s<t_{k-1}:X^k(s;t_{k-1},x_{k-1},v_{k-1})\in \partial \Omega\},\]
\[x_k(t,x,v,v_1,\cdots,v_{k-1})=X^k\left(t_k(t,x,v,v_{k-1});t_{k-1}(t,x,v),x_{k-1}(t,x,v),v_{k-1}\right),\]
\[v_k\in \{v_k\in \mathbb{R}^3:n(x_k)\cdot v_k>0\},\]
\[\mathcal{V}_k=\{v_k:n(x_k)\cdot v_k>0\},\]
\[X^k(s;t_{k-1},x_{k-1},v_{k-1})=x_{k-1}-(t_{k-1}-s)v_{k-1}.\]
Here we set
\[(t_0,x_0,v_0)=(t,x,v).\]
For simplicity, we denote
\[X^k(s):=X^k(s;t_{k-1},x_{k-1},v_{k-1}).\]
in the following lemmas and propositions.

\end{definition}

\begin{lemma}\label{lemma: the tracjectory formula for f^(m+1)}
Assume $h^{m+1}$ satisfy~\eqref{eqn:formula of f^(m+1)} with the Cercignani-Lampis boundary condition~\eqref{eqn: BC for fm+1}, if $t_1\leq 0$, then
\begin{equation}\label{eqn: Duhamal principle for case1}
|h^{m+1}(t,x,v)|\leq |h_0(X^1(0),v)| +\int_0^t e^{-|v|^2(t-s)} e^{|v|^2(\theta-s)}      \Gamma_{\text{gain}}^m(s)ds.
\end{equation}
If $t_1>0$, for $k\geq 2$, then
\begin{equation}\label{eqn: Duhamel principle for case 2}
|h^{m+1}(t,x,v)|\leq \int_{t_1}^t e^{-|v|^2(t-s)} e^{|v|^2(\theta-s)}   \Gamma_{\text{gain}}^m(s)ds+e^{|v|^2(\theta-t_1)}e^{[\frac{1}{4T_M}-\frac{1}{2T_w(x_1)}]|v|^2}\int_{\prod_{j=1}^{k-1}\mathcal{V}_j}H,
\end{equation}
where $H$ is bounded by
\begin{equation}\label{eqn: formula for H}
\begin{split}
   &  \sum_{l=1}^{k-1}\mathbf{1}_{\{t_l>0,t_{l+1}\leq 0\}}|h_0\left(X^{l+1}(0),v_l\right)|d\Sigma_{l,m}^k(0)\\
    & +\sum_{l=1}^{k-1}\int_{\max\{0,t_{l+1}\}}^{t_l} e^{|v_l|^2(\theta-s)}|\Gamma_{\text{gain}}^{m-l}(s)|d\Sigma_{l,m}^k(s)ds\\
    &+\mathbf{1}_{\{t_k>0\}}|h^{m-k+2}\left(t_k,x_k,v_{k-1}\right)|d\Sigma_{k-1,m}^k(t_k),
\end{split}
\end{equation}
where
\begin{equation}\label{eqn:trajectory measure}
\begin{split}
 d\Sigma_{l,m}^k(s)=&    \Big\{\prod_{j=l+1}^{k-1}d\sigma\left(v_j,v_{j-1}\right)\Big\}\\
 & \times \Big\{e^{-|v_l|^2(t_l-s)} e^{-|v_l|^2(\theta-t_l)}  e^{-[\frac{1}{4T_M}-\frac{1}{2T_w(x_l)}]|v_l|^2}
d\sigma(v_l,v_{l-1})\Big\}\\
    & \times\Big\{\prod_{j=1}^{l-1} e^{[\frac{1}{2T_w(x_j)}-\frac{1}{2T_w(x_{j+1})}]|v_j|^2}  d\sigma\left(v_j,v_{j-1}\right)\Big\}.
\end{split}
\end{equation}
Here we use a notation
\begin{equation}\label{eqn: gamma^m}
\Gamma_{\text{gain}}^{m-l}(s):=\Gamma_{\text{gain}}\left(\frac{h^{m-l}(s,X^{l+1}(s),v_{l})}{e^{|v_l|^2(\theta-s)}},\frac{h^m(s,X^{l+1}(s),v_l)}{e^{|v_l|^2(\theta-s)}}\right) \text{ for $0\leq l\leq m$ }.
\end{equation}
\end{lemma}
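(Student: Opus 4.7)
The plan is to prove the formula by iterating the Duhamel representation along the backward characteristic and applying the C-L boundary condition~\eqref{eqn: BC for fm+1} at every reflection. The bound $\nu^m \geq |v|^2$ from~\eqref{eqn: num} lets me replace the integrating factor $e^{-\int_s^t \nu^m\,dr}$ by the cleaner $e^{-|v|^2(t-s)}$ throughout.

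For $t_1 \leq 0$, integrating~\eqref{eqn:formula of f^(m+1)} along $X^1$ from $s = 0$ to $s = t$ gives~\eqref{eqn: Duhamal principle for case1} at once. For $t_1 > 0$, the same Duhamel step from $s = t_1$ produces the source integral in~\eqref{eqn: Duhamel principle for case 2} and a boundary value $e^{-|v|^2(t-t_1)}|h^{m+1}(t_1,x_1,v)|$. Applying~\eqref{eqn: BC for fm+1} at $(t_1,x_1,v)$ rewrites $h^{m+1}(t_1,x_1,v)$ as an integral of $h^m(t_1,x_1,v_1)$ over $\mathcal{V}_1$ with measure $d\sigma(v_1,v)$, carrying the outer factors $e^{(\theta - t_1)|v|^2}e^{[\frac{1}{4T_M}-\frac{1}{2T_w(x_1)}]|v|^2}$ and inner factors $e^{-(\theta - t_1)|v_1|^2}e^{-[\frac{1}{4T_M}-\frac{1}{2T_w(x_1)}]|v_1|^2}$. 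Bounding $e^{-|v|^2(t-t_1)} \leq 1$ yields the outer prefactor displayed in~\eqref{eqn: Duhamel principle for case 2}, and I would then induct on the number of reflections: inside the $v_1$-integral, apply Duhamel to $h^m$ along $X^2(\cdot\,;t_1,x_1,v_1)$, obtaining a $\Gamma^{m-1}_{\text{gain}}$ source on $(\max\{0,t_2\}, t_1)$ and either the initial value $\mathbf{1}_{\{t_2\leq 0\}}|h_0(X^2(0),v_1)|$ or the boundary value $\mathbf{1}_{\{t_2>0\}}|h^{m-1}(t_2,x_2,v_1)|$; in the latter subcase reapply the boundary condition to introduce $v_2$ and a new layer of weights. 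After $k-1$ such iterations, truncating the tail at $h^{m-k+2}(t_k,x_k,v_{k-1})$ produces the three families of terms in~\eqref{eqn: formula for H}.

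The main step is verifying that the accumulated weights coincide with $d\Sigma_{l,m}^k(s)$ in~\eqref{eqn:trajectory measure}. At each internal vertex $1 \leq j \leq l-1$ the product $e^{-(\theta - t_j)|v_j|^2}\cdot e^{-|v_j|^2(t_j - t_{j+1})}\cdot e^{(\theta - t_{j+1})|v_j|^2}$, coming respectively from the iteration-$j$ measure side, the Duhamel decay of segment $j+1$, and the iteration-$(j+1)$ outgoing side, collapses to $1$, while the two temperature weights at $v_j$ combine into $e^{[\frac{1}{2T_w(x_j)}-\frac{1}{2T_w(x_{j+1})}]|v_j|^2}$; this matches the third bracket of~\eqref{eqn:trajectory measure}. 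At the terminal vertex $j = l$ (where the source or initial data lives) the remaining weights form the middle bracket, with the extra $e^{|v_l|^2(\theta-s)}$ in front of $\Gamma^{m-l}_{\text{gain}}$ canceling against $e^{-|v_l|^2(\theta - t_l)}e^{-|v_l|^2(t_l - s)}$ inside $d\Sigma$ in the source case, and reducing to the correct $s=0$ specialization in the initial-data case. For $l+1 \leq j \leq k-1$ the integrand is independent of $v_j$, so only $d\sigma(v_j,v_{j-1})$ survives, consistent with the first bracket of~\eqref{eqn:trajectory measure}, and each such $v_j$ integrates to $1$ by~\eqref{eqn: normalization}. The principal difficulty is purely organizational: each exponent identity is routine, but the bookkeeping must be carried out carefully across $k$ iterations and the three different term types.
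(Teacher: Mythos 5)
Your proposal is correct and follows essentially the same route as the paper's proof: iterate Duhamel along each backward segment with the bound $e^{-\int_s^t\nu^m}\leq e^{-|v|^2(t-s)}$, apply the boundary condition~\eqref{eqn: BC for fm+1} at each reflection, and verify by an induction on $k$ that the accumulated exponentials telescope into the three brackets of $d\Sigma_{l,m}^k$. The exponent cancellations you identify at the internal vertices $1\leq j\leq l-1$ (both the $\theta$-and-$t_j$ part collapsing to $1$ and the temperature weights combining to $e^{[\frac{1}{2T_w(x_j)}-\frac{1}{2T_w(x_{j+1})}]|v_j|^2}$) are precisely what the paper carries out via the ratio $e^{-|v_{k-1}|^2(t_{k-1}-t_k)}\tilde{\mu}(t_{k-1},x_{k-1},v_{k-1})/\tilde{\mu}(t_k,x_k,v_{k-1})$ in its notation $\tilde{\mu}(t,x,v)=e^{-|v|^2(\theta-t)}e^{-[\frac{1}{4T_M}-\frac{1}{2T_w(x)}]|v|^2}$.
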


\begin{proof}
For simplicity, we denote
\begin{equation}\label{eqn: mu tilde}
\tilde{\mu}(t,x,v):=e^{-|v|^2(\theta-t)}e^{-[\frac{1}{4T_M}-\frac{1}{2T_w(x)}]|v|^2}.
\end{equation}
 From~\eqref{eqn:formula of f^(m+1)}, for $0\leq s\leq t$,
we apply the fundamental theorem of calculus to get
\[\frac{d}{ds}\int_s^t -\nu^md\tau=\frac{d}{ds}\int_t^s \nu^md\tau=\nu^m.\]
Thus based on~\eqref{eqn:formula of f^(m+1)},
\begin{equation}\label{eqn: inte factor}
\frac{d}{ds}\left[e^{-\int_s^t \nu^m d\tau} h^{m+1}(s,X^1(s),v)\right]=e^{-\int_s^t \nu^m d\tau}e^{|v|^2(\theta-s)}\Gamma_{\text{gain}}^m(s).
\end{equation}
By~\eqref{eqn: num},
\begin{equation}\label{eqn: v^2}
e^{-\int_s^t \nu^m d\tau} \leq e^{-|v|^2(t-s)}\leq 0.
\end{equation}
Combining~\eqref{eqn: inte factor} and~\eqref{eqn: v^2}, we derive that if $t_1\leq 0$, then we have~\eqref{eqn: Duhamal principle for case1}.

If $t_1(t,x,v)>0$, then
\begin{equation}\label{eqn: proof for the intial step in the Duhamul principle}
\begin{split}
   & |h^{m+1}(t,x,v)\textbf{1}_{\{t_1>0\}}|\leq  |h^{m+1}\left(t_1,x_1,v\right)|e^{-|v|^2(t-t_1)}\\
    & +\int_{t_1}^t e^{-|v|^2(t-s)} e^{|v|^2(\theta-s)}|\Gamma_{\text{gain}}^m(s)|ds.
\end{split}
\end{equation}
We use an induction of $k$ to prove~\eqref{eqn: Duhamel principle for case 2}. The first term of the RHS of~\eqref{eqn: proof for the intial step in the Duhamul principle} can be expressed by the boundary condition. For $1\leq k\leq m$, we rewrite the boundary condition~\eqref{eqn: BC for fm+1} using~\eqref{eqn: mu tilde} as
\begin{equation}\label{eqn: bc for hm+1}
  h^{m-k+2}(t_k,x_k,v_{k-1})=\frac{1}{\tilde{\mu}\left(t_k,x_k,v_{k-1}\right)}\int_{\mathcal{V}_k}  h^{m-k+1}(t_k,x_k,v_k)\tilde{\mu}(t_k,x_k,v_k)  d\sigma\left(v_k,v_{k-1}\right).
\end{equation}
Directly applying~\eqref{eqn: bc for hm+1} with $k=1$ the first term of the RHS of~\eqref{eqn: proof for the intial step in the Duhamul principle} is bounded by
\begin{equation}\label{eqn: above term}
\frac{1}{\tilde{\mu}\left(t_1,x_1,v\right)}\int_{\mathcal{V}_1}h^{m}(t_1,x_1,v_1)\tilde{\mu}(t_1,x_1,v_1)d\sigma(v_1,v).
\end{equation}
Then we apply~\eqref{eqn: Duhamal principle for case1} and~\eqref{eqn: proof for the intial step in the Duhamul principle} to derive
\[~\eqref{eqn: above term}\leq \frac{1}{\tilde{\mu}(t_1,x_1,v)}\Big[\int_{\mathcal{V}_1}\mathbf{1}_{\{t_2\leq 0<t_1\}} e^{-|v_1|^2t_1} h^{m}(0,X^{2}(0),v_1)\tilde{\mu}(t_1,x_1,v_1)d\sigma(v_1,v)\]

\[+\int_{\mathcal{V}_1}\int_0^{t_1}\mathbf{1}_{\{t_2\leq 0<t_1\}}e^{-|v_1|^2(t_1-s)}  e^{|v_1|^2(\theta-s)}  |\Gamma_{\text{gain}}^{m-1}(s)|\tilde{\mu}(t_1,x_1,v_1)d\sigma(v_1,v) ds\]

\[+\int_{\mathcal{V}_1}\mathbf{1}_{\{t_2>0\}}e^{-|v_1|^2(t_1-t_2)} |h^{m}(t_2,x_2,v_1)\tilde{\mu}(t_1,x_1,v_1)d\sigma(v_1,v)\]

\[+\int_{\mathcal{V}_1}\int_{t_2}^{t_1}\mathbf{1}_{\{t_2> 0\}}e^{-|v_1|^2(t_1-s)}  e^{|v_1|^2(\theta-s)}  |\Gamma_{\text{gain}}^{m-1}(s)|\tilde{\mu}(t_1,x_1,v_1)d\sigma\left(v_1,v\right) ds \Big].\]
Therefore, the formula~\eqref{eqn: Duhamel principle for case 2} is valid for $k=2$.

Assume~\eqref{eqn: Duhamel principle for case 2} is valid for $k\geq 2$ (induction hypothesis). Now we prove that~\eqref{eqn: Duhamel principle for case 2} holds for $k+1$. We express the last term in~\eqref{eqn: formula for H} using the boundary condition. In~\eqref{eqn: bc for hm+1}, since $\frac{1}{\tilde{\mu}\left(t_k,x_k,v_{k-1}\right)}$ depends on $v_{k-1}$, we move this term to the integration over $\mathcal{V}_{k-1}$ in~\eqref{eqn: Duhamel principle for case 2}. Using the second line of~\eqref{eqn:trajectory measure}, the integration over $\mathcal{V}_{k-1}$ is
\begin{equation}\label{eqn: wl}
\int_{\mathcal{V}_{k-1}}   e^{-|v_{k-1}|^2(t_{k-1}-t_k)}\tilde{\mu}(t_{k-1},x_{k-1},v_{k-1}) /\tilde{\mu}\left(t_k,x_k,v_{k-1}\right)   d\sigma(v_{k-1},v_{k-2}).
\end{equation}
We have
\[e^{-|v_{k-1}|^2(t_{k-1}-t_k)}\tilde{\mu}(t_{k-1},x_{k-1},v_{k-1}) /\tilde{\mu}\left(t_k,x_k,v_{k-1}\right)\]
\[=e^{-|v_{k-1}|^2(t_{k-1}-t_k)}e^{|v_{k-1}|^2(t_{k-1}-t_k)}e^{[\frac{1}{2T_w(x_{k-1})}-\frac{1}{2T_w(x_k)}]|v_{k-1}|^2}=e^{[\frac{1}{2T_w(x_{k-1})}-\frac{1}{2T_w(x_k)}]|v_{k-1}|^2}.\]
Therefore, by~\eqref{eqn: wl} the integration over $\mathcal{V}_{k-1}$ reads
\begin{equation}\label{eqn: 6}
\int_{\mathcal{V}_{k-1}}   e^{[\frac{1}{2T_w(x_{k-1})}-\frac{1}{2T_w(x_k)}]|v_{k-1}|^2}   d\sigma(v_{k-1},v_{k-2}),
\end{equation}
 which is consistent with third line in~\eqref{eqn:trajectory measure} with $l=k-1$.

For the remaining integration in~\eqref{eqn: bc for hm+1}, we split the integration over $\mathcal{V}_k$ into two terms as
\begin{equation}\label{eqn: sh}
\int_{\mathcal{V}_k}h^{m-k+1}(t_k,x_k,v_k) \tilde{\mu}(t_k,x_k,v_k) d\sigma(v_k,v_{k-1})=\underbrace{\int_{\mathcal{V}_k}\mathbf{1}_{\{t_{k+1}\leq 0<t_k\}}}_{\eqref{eqn: sh}_1}+\underbrace{\int_{\mathcal{V}_k}\mathbf{1}_{\{t_{k+1}>0\}}}_{\eqref{eqn: sh}_2}.
\end{equation}
For the first term of the RHS of~\eqref{eqn: sh}, we use the similar bound of~\eqref{eqn: Duhamal principle for case1} and derive that
\begin{equation}\label{eqn: wy}
\begin{split}
   &\eqref{eqn: sh}_1\leq  \int_{\mathcal{V}_k} \mathbf{1}_{\{t_{k+1}\leq 0<t_k\}} e^{- |v_k|^2t_k}h^{m-k+1}(0,X^{k+1}(0),v_k)\tilde{\mu}(t_k,x_k,v_k)d\sigma(v_k,v_{k-1}) \\
    & +\int_{\mathcal{V}_k}\int_0^{t_k}\mathbf{1}_{\{t_{k+1}\leq 0<t_k\}}e^{-|v_k|^2(t_k-s)}e^{ |v_k|^2(\theta-s)}\Gamma_{\text{gain}}^{m-k}(s)\tilde{\mu}(t_k,x_k,v_k) d\sigma(v_k,v_{k-1}) ds.
\end{split}
\end{equation}
In the first line of~\eqref{eqn: wy},
\[e^{-|v_k|^2t_k}\tilde{\mu}(t_k,x_k,v_k)d\sigma(v_k,v_{k-1}),\]
is consistent with the second line of~\eqref{eqn:trajectory measure} with $l=k$, $s=t_k$. In the second line of~\eqref{eqn: wy}
\[e^{-|v_k|^2(t_k-s)}\tilde{\mu}(t_k,x_k,v_k)d\sigma(v_k,v_{k-1}),\]
is consistent with the second line of~\eqref{eqn:trajectory measure} with $l=k$.

From the induction hypothesis(~\eqref{eqn: Duhamel principle for case 2} is valid for $k$) and~\eqref{eqn: 6}, we derive the integration over $\mathcal{V}_j$ for $j\leq k-1$ is consistent with the third line of~\eqref{eqn:trajectory measure}.
After taking integration $\int_{\prod_{j=1}^{k-1} \mathcal{V}_j}$ we change $d\Sigma_{k-1,m}^k$ in~\eqref{eqn:trajectory measure} to $d\Sigma_{k,m}^{k+1}$. Thus the contribution of~\eqref{eqn: wy} is
\begin{equation}\label{eqn: qs}
  \begin{split}
     &\int_{\prod_{j=1}^{k} \mathcal{V}_j} \mathbf{1}_{\{t_{k+1}\leq 0<t_k\}}|h_0\left(X^{k+1}(0),v_k\right)|d\Sigma_{k,m}^{k+1}(0) \\
      & +\int_{\prod_{j=1}^{k} \mathcal{V}_j}\int_{0}^{t_k} e^{ |v_k|^2(\theta-s)}\Gamma_{\text{gain}}^{m-k}(s)d\Sigma_{k,m}^{k+1}(s)ds.
  \end{split}
\end{equation}

For the second term of the RHS of~\eqref{eqn: sh}, we use the same estimate as~\eqref{eqn: Duhamal principle for case1} and we derive
\begin{equation}\label{eqn: wyl}
  \begin{split}
     &  \eqref{eqn: sh}_2\leq \int_{\mathcal{V}_k}\mathbf{1}_{\{t_{k+1}>0\}}e^{-|v_k|^2(t_{k}-t_{k+1})}h^{m-k+1}\left(t_{k+1},x_{k+1},v_{k}\right)\tilde{\mu}(t_k,x_k,v_k)d\sigma\left(v_k,v_{k-1}\right)+\\
      & \int_{\mathcal{V}_k}\int_{t_{k+1}}^{t_k}\mathbf{1}_{\{t_{k+1}> 0\}}e^{-|v_k|^2(t_k-s)}e^{ |v_k|^2(\theta-s)}\Gamma_{\text{gain}}^{m-k}(s)\tilde{\mu}(t_k,x_k,v_k) d\sigma(v_k,v_{k-1}) ds.
  \end{split}
\end{equation}
Similar to~\eqref{eqn: qs}, after taking integration over $\int_{\prod_{j=1}^{k-1}\mathcal{V}_j}$ the contribution of~\eqref{eqn: wyl} is
\begin{equation}\label{eqn: nxh}
\begin{split}
   & \int_{\prod_{j=1}^{k} \mathcal{V}_j} \mathbf{1}_{\{t_{k+1}>0\}}|h^{m-k+1}\left(t_{k+1},x_{k+1},v_k\right)|d\Sigma_{k,m}^{k+1}(t_{k+1})\\
    & +\int_{\prod_{j=1}^{k} \mathcal{V}_j}\int_{t_{k+1}}^{t_k}e^{ |v_k|^2(\theta-s) }\Gamma_{\text{gain}}^{m-k}(s)d\Sigma_{k,m}^{k+1}(s) ds.
\end{split}
\end{equation}

From~\eqref{eqn: nxh}~\eqref{eqn: qs}, the summation in the first and second lines of~\eqref{eqn: formula for H} extends to $k$. And the index of the third line of~\eqref{eqn: formula for H} changes from $k$ to $k+1$.
For the rest terms, the index $l\leq k-1$, we haven't done any change to them. Thus their integration are over $\prod_{l=1}^{k-1} \mathcal{V}_j$. We add $\int_{\mathcal{V}_k}d\sigma(v_k,v_{k-1})=1$ to all of them, so that all the integrations are over $\prod_{l=1}^k \mathcal{V}_j$ and we change $d\Sigma_{l,m}^{k-1}$ to $d\Sigma_{l,m}^k$ by
\[d\Sigma_{l,m}^{k}=d\sigma\left(v_k,v_{k-1}\right)d\Sigma_{l,m}^{k-1}.\]
Therefore, the formula~\eqref{eqn: formula for H} is valid for $k+1$ and we derive the lemma.
\end{proof}

The next lemma is the key to prove the $L^\infty$ bound for $h^{m+1}$. Below we define several notation: let
\begin{equation}\label{eqn: def of r}
r_{max}:=\max(r_\parallel(2-r_\parallel),r_\perp),~~~~~~~~~~~~ r_{min}:=\min(r_\parallel(2-r_\parallel),r_\perp).
\end{equation}
Then we have
\begin{equation}\label{eqn: r>0}
1\geq r_{max}\geq r_{min}>0.
\end{equation}
Define
\[\xi:=\frac{1}{4T_M\theta},\]
where $\theta<\frac{1}{4T_M}$ is given in~\eqref{eqn: hm+1}. Then we have
\begin{equation}\label{eqn: xi}
  \theta=\frac{1}{4T_M\xi},\quad \xi>1.
\end{equation}
We inductively define:
\begin{equation}\label{eqn: definition of T_p}
T_{l,l}=\frac{2\xi}{\xi+1}T_M,    \quad T_{l,l-1}=r_{min}T_M +(1-r_{min})T_{l,l}, \cdots,\quad  T_{l,1}= r_{min}T_M +(1-r_{min})T_{l,2}.
\end{equation}
By a direct computation, for $1\leq i\leq l$, we have
\begin{equation}\label{eqn: formula of Tp}
T_{l,i}=\frac{2\xi}{\xi+1}T_M+(T_M-\frac{2\xi}{\xi+1}T_M)[1-(1-r_{min})^{l-i}]
\end{equation}
Moreover, let
\begin{equation}\label{eqn: big phi}
\begin{split}
 d\Phi_{p,m}^{k,l}(s):=  & \{\prod_{j=l+1}^{k-1}d\sigma(v_j,v_{j-1})\}\\
 &\times \{e^{-|v_l|^2(t_l-s)} e^{-|v_l|^2(\theta-t_l)} e^{-[\frac{1}{4T_M}-\frac{1}{2T_w(x_l)}]|v_l|^2}
d\sigma(v_l,v_{l-1})\}  \\
    & \times \{\prod_{j=p}^{l-1}   e^{[\frac{1}{2T_w(x_j)}-\frac{1}{2T_w(x_{j+1})}]|v_j|^2}  d\sigma(v_j,v_{j-1})\}.
\end{split}
\end{equation}
Note that if $p=1$, $d\Phi_{1,m}^{k,l}(s)=d\Sigma_{l,m}^{k}(s)$ where $d\Sigma_{l,m}^{k}(s)$ is defined in~\eqref{eqn:trajectory measure}. And let
\begin{equation}\label{eqn: upsilon}
d\Upsilon_{p}^{p'}:=\{\prod_{j=p}^{p'}   e^{[\frac{1}{2T_w(x_j)}-\frac{1}{2T_w(x_{j+1})}]|v_j|^2}  d\sigma(v_j,v_{j-1})\}.
\end{equation}
Then by the definition of~\eqref{eqn: big phi} and~\eqref{eqn:trajectory measure}, we have
\begin{equation}\label{eqn: ppt for Phi}
d\Phi_{p,m}^{k,l}(s)=d\Phi_{p',m}^{k,l}(s)d\Upsilon_{p}^{p'-1},
\end{equation}
\begin{equation}\label{eqn: property for Gamma}
d\Sigma_{l,m}^k(s)=d\Phi_{p,m}^{k,l}(s)d\Upsilon_1^{p-1}.
\end{equation}
\begin{remark}
We aim to bound the multiple integral in the trajectory formula in Lemma \ref{lemma: the tracjectory formula for f^(m+1)}. Each integral in the formula involves the variable $T_w(x),T_M,r_\perp,r_\parallel$, thus we need to find the pattern of the upper bound for each fold integral. This is the reason we define these inductive notations.

\end{remark}

Now we state the lemma.

\begin{lemma}\label{lemma: boundedness}
Given the formula for $h^{m+1}$ in~\eqref{eqn: Duhamal principle for case1} and~\eqref{eqn: Duhamel principle for case 2} of lemma~\ref{lemma: the tracjectory formula for f^(m+1)}, there exists
\begin{equation}\label{eqn: t*}
t^*=t^*(T_M,\xi,\mathcal{C},k)
\end{equation}
such that when $t\leq t^*$, for any $0\leq s\leq t_l$ we have
\begin{equation}\label{eqn: boundedness for l-p+1 fold integration}
   \int_{\prod_{j=p}^{k-1}\mathcal{V}_j}     \mathbf{1}_{\{t_l>0\}}  d\Phi_{p,m}^{k,l}(s) \leq (C_{T_M,\xi})^{2(l-p+1)}\mathcal{A}_{l,p}.
\end{equation}
where we define:
\begin{equation}\label{eqn: Elp}
\mathcal{A}_{l,p}=\exp\left(\big[ \frac{[T_{l,p}-T_w(x_{p})][1-r_{min}]}{2T_w(x_{p})[T_{l,p}(1-r_{min})+r_{min} T_w(x_{p})]} + \mathcal{C}^{l-p+1}t\big]|v_{p-1}|^2\right).
\end{equation}
Here $C_{T_M,\xi}$ is a constant defined in~\eqref{eqn: 1 one} and $\mathcal{C}$ is constant defined in~\eqref{eqn: cal C}.

Moreover, for any $p<p'\leq l$, we have
\begin{equation}\label{eqn: structure}
\int_{\prod_{j=p}^{k-1}\mathcal{V}_j} \mathbf{1}_{\{t_l>0\}}  d\Phi_{p,m}^{k,l}(s)\leq (C_{T_M,\xi})^{2(l-p'+1)} \mathcal{A}_{l,p'}\int_{\prod_{j=p}^{p'-1} \mathcal{V}_j}  \mathbf{1}_{\{t_l>0\}} d\Upsilon_p^{p'-1}\leq (C_{T_M,\xi})^{2(l-p+1)}\mathcal{A}_{l,p}.
\end{equation}

\end{lemma}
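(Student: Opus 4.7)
The plan is a downward induction on $p$ for fixed $l$, integrating out the variables $v_{k-1}, v_{k-2}, \ldots, v_p$ in that order. The structural claim~\eqref{eqn: structure} follows almost immediately from~\eqref{eqn: boundedness for l-p+1 fold integration} via the factorization $d\Phi_{p,m}^{k,l}(s) = d\Phi_{p',m}^{k,l}(s) \, d\Upsilon_p^{p'-1}$ in~\eqref{eqn: ppt for Phi}: apply~\eqref{eqn: boundedness for l-p+1 fold integration} with $p'$ in place of $p$ to the inner integration, obtaining $(C_{T_M,\xi})^{2(l-p'+1)} \mathcal{A}_{l,p'}$ as a quantity depending only on $v_{p'-1}$ that can be pulled out of the remaining integration in $v_{p}, \ldots, v_{p'-1}$, and then bound the leftover $d\Upsilon_p^{p'-1}$-integration by iterating the same estimates. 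So the substantive work is~\eqref{eqn: boundedness for l-p+1 fold integration}.

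I would first use $\mathbf{1}_{\{t_l>0\}} \leq 1$ to drop the indicator, then collapse the innermost $k-1-l$ integrations: by the normalization property~\eqref{eqn: normalization}, each $\int_{\mathcal{V}_j} d\sigma(v_j, v_{j-1}) = 1$ for $j = l+1, \ldots, k-1$. What remains is an integral over $\mathcal{V}_l \times \cdots \times \mathcal{V}_p$ of the second and third lines of~\eqref{eqn: big phi}. The heart of the argument is then a downward induction on $j$ starting at $j=l$. At the base step, the integrand in $v_l$ is $e^{-[(\theta-s) + \frac{1}{4T_M} - \frac{1}{2T_w(x_l)}]|v_l|^2}\, d\sigma(v_l, v_{l-1})$; splitting $v_l = v_{l,\parallel} + v_{l,\perp} n(x_l)$ and applying Lemma~\ref{Lemma: abc} (tangential) together with Lemma~\ref{Lemma: perp abc} (normal, which handles the Bessel kernel $I_0$ using the estimates in the appendix), this integrates explicitly to $(C_{T_M,\xi})^2 \, e^{c_l |v_{l-1}|^2}$, with $c_l$ dictated by the effective temperature $T_{l,l} = \frac{2\xi}{\xi+1} T_M$ obtained from balancing $\theta = \frac{1}{4T_M\xi}$ against the exponent at the wall. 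At each subsequent step $j \to j-1$, the previously produced $e^{c_j |v_{j-1}|^2}$ combines with the third-line factor $e^{[\frac{1}{2T_w(x_{j-1})} - \frac{1}{2T_w(x_j)}]|v_{j-1}|^2}$ and is integrated against $d\sigma(v_{j-1}, v_{j-2})$, producing $(C_{T_M,\xi})^2 \, e^{c_{j-1} |v_{j-2}|^2}$ with $c_{j-1}$ satisfying exactly the recursion $T_{l,j-1} = r_{min} T_M + (1-r_{min}) T_{l,j}$ built into~\eqref{eqn: definition of T_p}. After $l - p + 1$ steps the exponent matches the $T_{l,p}$-term in~\eqref{eqn: Elp}, and the buffer $\mathcal{C}^{l-p+1} t$ inside $\mathcal{A}_{l,p}$ absorbs the small $s$-dependent perturbations from the factor $e^{-|v_j|^2(t_j-s)}$ picked up at each step, provided $t \leq t^*$ is taken as in~\eqref{eqn: t*}.

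The main obstacle is verifying that the wall temperature condition~\eqref{eqn: Constrain on T} is exactly what is needed, at every level of the recursion, to keep the coefficient of $|v_{j-1}|^2$ in the effective Gaussian strictly below the convergence threshold required by Lemmas~\ref{Lemma: abc} and~\ref{Lemma: perp abc}; equivalently, that the sequence $\{T_{l,i}\}$ defined by~\eqref{eqn: definition of T_p} stays compatible with $\theta < \frac{1}{4T_M}$. The choice $T_{l,l} = \frac{2\xi}{\xi+1} T_M$ is precisely engineered so that the base-step exponent lies in this admissible range, and the factor $r_{min}$ in the recursion then enforces the contraction toward $T_M$ uniformly in $l$. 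The remaining bookkeeping is twofold: distribute the constant $(C_{T_M,\xi})^2$ per integration step to match the overall prefactor $(C_{T_M,\xi})^{2(l-p+1)}$, and combine the $O(t)$ remainders arising from the $s$-dependence at each step into a single exponent $\mathcal{C}^{l-p+1} t\, |v_{p-1}|^2$, which dictates the choice of $t^*$ in terms of $T_M, \xi, \mathcal{C}$ and $k$.
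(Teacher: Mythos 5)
Your proposal matches the paper's own proof in both structure and detail: you reduce to $k=l+1$ by normalization, run a downward induction on $p$ starting from $p=l$, split each $v_j$-integration into tangential and normal parts and evaluate them explicitly via Lemma~\ref{Lemma: abc} and Lemma~\ref{Lemma: perp abc}, identify $T_{l,l}=\frac{2\xi}{\xi+1}T_M$ from the $\theta=\frac{1}{4T_M\xi}$ balance, propagate via the recursion $T_{l,q}=r_{min}T_M+(1-r_{min})T_{l,q+1}$ (which requires bounding $T_w(x_{q+1})\leq T_M$ to decouple from $v_q$), and absorb the $O(t)$ residuals into $\mathcal{C}^{l-p+1}t$; the structure inequality~\eqref{eqn: structure} then follows by the factorization~\eqref{eqn: ppt for Phi} together with iterating the one-step estimate. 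One misattribution worth noting: the admissibility $b-a-\e>0$ at each level is \emph{not} where the wall-temperature constraint~\eqref{eqn: Constrain on T} enters. Since $b-a=\frac{(1-r_\parallel)^2}{2T_w(x_q)r_\parallel(2-r_\parallel)}+\frac{1}{2T_{l,q}}\geq\frac{1}{2T_{l,q}}\geq\frac{\xi+1}{4\xi T_M}$ (and analogously in the normal direction), admissibility follows from $\xi>1$ (i.e., $\theta<\frac{1}{4T_M}$) and $t$ small alone; assumption~\eqref{eqn: Constrain on T} is only invoked later, in Lemma~\ref{lemma: t^k}, to guarantee $\eta_{i,\parallel},\eta_{i,\perp}<1$.
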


\begin{proof}
From~\eqref{eqn: normalization} and~\eqref{eqn:probability measure}, for the first bracket of the first line in~\eqref{eqn:trajectory measure} with $l+1\leq j\leq k-1$, we have
\[\int_{\prod_{j=l+1}^{k-1} \mathcal{V}_j}    \prod_{j=l+1}^{k-1}d\sigma(v_j,v_{j-1})=1.\]
Without loss of generality we can assume $k=l+1$. Thus $d\Phi_{p,m}^{k,l}=d\Phi_{p,m}^{l+1,l}$. We use an induction of $p$ with $1\leq p\leq l$ to prove~\eqref{eqn: boundedness for l-p+1 fold integration}.

When $p=l$, by the second line of~\eqref{eqn: big phi}, the integration over $\mathcal{V}_l$ is written as
\begin{equation}\label{eqn: dsigmal}
\int_{\mathcal{V}_l} e^{-|v_l|^2(t_l-s)} e^{-|v_l|^2(\theta-t_l)} e^{-[\frac{1}{4T_M}-\frac{1}{2T_w(x_l)}]|v_l|^2}d\sigma(v_l,v_{l-1}).
\end{equation}
By $\theta=\frac{1}{4T_M\xi}$ in~\eqref{eqn: xi} and $s\leq t_l$, we bound~\eqref{eqn: dsigmal} by
\begin{equation}\label{eqn: dsigma11}
\int_{\mathcal{V}_l} e^{-[\frac{1}{2T_M}\frac{\xi+1}{2\xi}-\frac{1}{2T_w(x_l)}-t_l]|v_l|^2}d\sigma(v_l,v_{l-1}).
\end{equation}
Expanding $d\sigma(v_l,v_{l-1})$ with~\eqref{eqn: Formula for R} and~\eqref{eqn:probability measure} we rewrite~\eqref{eqn: dsigma11} as
\begin{equation}\label{eqn: int over V_l}
\begin{split}
   &\int_{\mathcal{V}_{l,\perp}} \frac{2}{r_\perp}\frac{|v_{l,\perp}|}{2T_w(x_l)}  e^{-[\frac{1}{2T_M}\frac{\xi+1}{2\xi}-\frac{1}{2T_w(x_l)}-t_l]|v_{l,\perp}|^2}I_0\left(\frac{(1-r_\perp)^{1/2}v_{l,\perp}v_{l-1,\perp}}{T_w(x_l)r_\perp}\right)e^{-\frac{|v_{l,\perp}|^2+(1-r_\perp)|v_{l-1,\perp}|^2}{2T_w(x_l)r_\perp}}  dv_{l,\perp} \\
    &\times \int_{\mathcal{V}_{l,\parallel}}\frac{1}{\pi r_\parallel(2-r_\parallel)(2T_w(x_l))}e^{-[\frac{1}{2T_M}\frac{\xi+1}{2\xi}-\frac{1}{2T_w(x_l)}-t_l]|v_{l,\parallel}|^2}e^{-\frac{1}{2T_w(x_l)}\frac{|v_{l,\parallel}-(1-r_\parallel)v_{l-1,\parallel}|^2}{r_\parallel(2-r_\parallel)}}dv_{l,\parallel},
\end{split}
\end{equation}
where $v_{l,\parallel}$, $v_{l,\perp}$, $\mathcal{V}_{l,\perp}$ and $\mathcal{V}_{l,\parallel}$ are defined as
\begin{equation}\label{eqn: Define space}
v_{l,\perp}=v_l\cdot n(x_l),~~ v_{l,\parallel}=v_l-v_{l,\perp}n(x_l),~~\mathcal{V}_{l,\perp}=\{v_{l,\perp}:v_l\in \mathcal{V}_l\},~~\mathcal{V}_{l,\parallel}=\{v_{l,\parallel}:v_l\in \mathcal{V}_l\}.
\end{equation}
$v_{l-1,\parallel}$ and $v_{l-1,\perp}$ are defined similarly.

First we compute the integration over $\mathcal{V}_{l,\parallel}$, the second line of~\eqref{eqn: int over V_l}. To apply~\eqref{eqn: coe abc} in Lemma \ref{Lemma: abc}, we set
\[\e=t_l,~w=(1-r_\parallel)v_{l-1,\parallel}~,v=v_{l,\parallel},\]
\begin{equation}\label{eqn: coefficient a and b}
a=-[\frac{1}{2T_M\frac{2\xi}{\xi+1}}-\frac{1}{2T_w(x_l)}],~ b=\frac{1}{2T_w(x_l)r_\parallel(2-r_\parallel)}.
\end{equation}
By $\xi>1$ in~\eqref{eqn: xi}, we take $t^*=t^*(\xi,T_M)\ll 1$ such that when $t_l<t\leq t^*$, we have
\begin{equation}\label{eqn: b-a-e}
b-a-\e=\frac{1}{2T_w(x_l)r_\parallel(2-r_\parallel)}-\frac{1}{2T_w(x_l)}+\frac{1}{2T_M\frac{2\xi}{\xi+1}}-t_l
\geq \frac{1}{2T_M\frac{2\xi}{\xi+1}}-t\geq \frac{1}{4T_M}.
\end{equation}
Also we take $t^*=t^*(\xi,T_M)$ to be small enough to obtain $1+4T_M t_l\leq 1+4T_M t\leq 2$ when $t\leq t^*$. Thus the $t^*$ we choose here is consistent with~\eqref{eqn: t*}. Hence
\[\frac{b}{b-a-\e}=\frac{b}{b-a}[1+\frac{\e}{b-a-\e}]\leq \frac{\frac{2\xi}{\xi+1}T_M}{\frac{2\xi}{\xi+1}T_M+[T_w(x_l)-\frac{2\xi}{\xi+1}T_M]r_\parallel(2-r_\parallel)}[1+4T_Mt_l]\] \begin{equation}\label{eqn: 1 one}
\leq \frac{\frac{4\xi}{\xi+1}T_M}{\frac{2\xi}{\xi+1}T_M+[\min\{T_w(x)\}-\frac{2\xi}{\xi+1}T_M]r_{max}}:=C_{T_M,\xi},
\end{equation}
where we use~\eqref{eqn: def of r}.

In regard to~\eqref{eqn: coe abc}, we have
\begin{equation}\label{eqn: com}
\frac{(a+\e)b}{b-a-\e}=\frac{ab}{b-a}[1+\frac{\e}{b-a-\e}]+\frac{b}{b-a-\e}\e.
\end{equation}
By~\eqref{eqn: 1 one} and $t_l<t$, we obtain
\[\frac{b}{b-a-\e}\e\leq \frac{\frac{4\xi}{\xi+1}T_M}{\frac{2\xi}{\xi+1}T_M+[\min\{T_w(x)\}-\frac{2\xi}{\xi+1}T_M]r_{\max}}t.\]
By~\eqref{eqn: coefficient a and b}, we have
\[\frac{ab}{b-a}=\frac{\frac{2\xi}{\xi+1}T_M-T_w(x_l)}{2T_w(x_l)[\frac{2\xi}{\xi+1}T_M+[T_w(x_l)-\frac{2\xi}{\xi+1}T_M]r_\parallel(2-r_\parallel)]}.\]
Therefore, by~\eqref{eqn: b-a-e} and~\eqref{eqn: com} we obtain
\begin{equation}\label{eqn: 2 two}
\begin{split}
  &  \frac{(a+\e)b}{b-a-\e}\leq \frac{\frac{2\xi}{\xi+1}T_M-T_w(x_l)}{2T_w(x_l)[\frac{2\xi}{\xi+1}T_M+[T_w(x_l)-\frac{2\xi}{\xi+1}T_M]r_\parallel(2-r_\parallel)]}+\mathcal{C} t.
\end{split}
\end{equation}
where we define
\begin{equation}\label{eqn: cal C}
\mathcal{C}:=\frac{4T_M\big(\frac{2\xi}{\xi+1}T_M-\min\{T_w(x)\}\big)}{2\min\{T_w(x)\}[\frac{2\xi}{\xi+1}T_M+[\min\{T_w(x)\}-\frac{2\xi}{\xi+1}T_M]r_{max}]}+\frac{\frac{4\xi}{\xi+1}T_M}{\frac{2\xi}{\xi+1}T_M+[\min\{T_w(x)\}-\frac{2\xi}{\xi+1}T_M]r_{max}}.
\end{equation}

By~\eqref{eqn: 1 one},~\eqref{eqn: 2 two} and Lemma~\ref{Lemma: abc}, using $w=(1-r_\parallel)v_{l-1,\parallel}$ we bound the second line of~\eqref{eqn: int over V_l} by
\begin{equation}\label{eqn: result for para}
C_{T_M,\xi}\exp\bigg(\Big[\frac{[\frac{2\xi}{\xi+1}T_M-T_w(x_l)]}{2T_w(x_l)[\frac{2\xi}{\xi+1}T_M(1-r_\parallel)^2+r_\parallel(2-r_\parallel) T_w(x_l)]}+\mathcal{C}t \Big]|(1-r_\parallel)v_{l-1,\parallel}|^2\bigg)
\end{equation}
\begin{equation}\label{eqn: result of dsigmal}
\leq C_{T_M,\xi}\exp\bigg(\Big[\frac{[\frac{2\xi}{\xi+1}T_M-T_w(x_l)][1-r_{min}]}{2T_w(x_l)\big[\frac{2\xi}{\xi+1}T_M(1-r_{min})+r_{min} T_w(x_l)\big]}+\mathcal{C}t\Big]|v_{l-1,\parallel}|^2\bigg).
\end{equation}
where we use~\eqref{eqn: def of r} and~\eqref{eqn: r>0}.

Next we compute first line of~\eqref{eqn: int over V_l}. To apply~\eqref{eqn: coe abc perp} in Lemma \ref{Lemma: perp abc}, we set
\[\e=t_l,~w=\sqrt{1-r_\parallel}v_{l-1,\perp}~,v=v_{l,\perp},\]
\[a=-[\frac{1}{2T_M\frac{2\xi}{\xi+1}}-\frac{1}{2T_w(x_l)}],~ b=\frac{1}{2T_w(x_l)r_\perp}.\]
Thus we can compute $\frac{b}{b-a-\e}$ and $\frac{(a+\e)b}{b-a-\e}$ using the exactly the way as~\eqref{eqn: 1 one} and~\eqref{eqn: 2 two} with replacing $r_\parallel(2-r_\parallel)$ by $r_\perp$. Hence replacing $r_\parallel(2-r_\parallel)$ by $r_\perp$ and replacing $v_{l-1,\parallel}$ by $v_{l-1,\perp}$ in~\eqref{eqn: result for para}, we bound the first line of~\eqref{eqn: int over V_l} by
\[C_{T_M,\xi}\exp\bigg(\Big[\frac{[\frac{2\xi}{\xi+1}T_M-T_w(x_l)]}{2T_w(x_l)[\frac{2\xi}{\xi+1}T_M(1-r_\perp)+r_\perp T_w(x_l)]}+\mathcal{C}t_l \Big]|\sqrt{1-r_\perp}v_{l-1,\perp}|^2\bigg)\]
\begin{equation}\label{eqn: result of dsigmal normal}
\leq C_{T_M,\xi}\exp\bigg(\Big[\frac{[\frac{2\xi}{\xi+1}T_M-T_w(x_l)][1-r_{min}]}{2T_w(x_l)\big[\frac{2\xi}{\xi+1}T_M(1-r_{min})+r_{min} T_w(x_l)\big]}+\mathcal{C}t\Big]|v_{l-1,\perp}|^2\bigg).
\end{equation}
where we use~\eqref{eqn: def of r} and~\eqref{eqn: r>0}.

Collecting~\eqref{eqn: result of dsigmal}~\eqref{eqn: result of dsigmal normal}, we derive
\[\eqref{eqn: int over V_l}\leq (C_{T_M,\xi})^2\exp\left(\left[\frac{[\frac{2\xi}{\xi+1}T_M-T_w(x_l)][1-r_{min}]}{2T_w(x_l)\big[\frac{2\xi}{\xi+1}T_M(1-r_{min})+r_{min} T_w(x_l)\big]}+\mathcal{C}t\right]|v_{l-1}|^2\right)=(C_{T_M,\xi})^2\mathcal{A}_{l,l},\]
where $\mathcal{A}_{l,l}$ is defined in~\eqref{eqn: Elp} and $T_{l,l}=\frac{2\xi}{\xi+1}T_M$.

Therefore,~\eqref{eqn: boundedness for l-p+1 fold integration} is valid for $p=l$.

Suppose~\eqref{eqn: boundedness for l-p+1 fold integration} is valid for the $p=q+1$(induction hypothesis) with $q+1\leq l$, then
\[\int_{\prod_{j=q+1}^{l}\mathcal{V}_j}        \mathbf{1}_{\{t_l>0\}} d\Phi_{q+1,m}^{l+1,l}(s)\leq (C_{T_M,\xi})^{2(l-q)}\mathcal{A}_{l,q+1}.\]
We want to show~\eqref{eqn: boundedness for l-p+1 fold integration} holds for $p=q$. By the hypothesis and the third line of~\eqref{eqn: big phi},
\begin{equation}\label{eqn: dsigmaq}
  \int_{\prod_{j=q}^{l}\mathcal{V}_j}        \mathbf{1}_{\{t_l>0\}} d\Phi_{q,m}^{l+1,l}(s) \leq  (C_{T_M,\xi})^{2(l-q)}\mathcal{A}_{l,q+1}\int_{\mathcal{V}_q}  e^{[\frac{1}{2T_w(x_{q})}-\frac{1}{2T_w(x_{q+1})}]|v_{q}|^2}  d\sigma(v_q,v_{q-1}).
\end{equation}
Using the definition of $\mathcal{A}_{l,q+1}$ in~\eqref{eqn: Elp}, we obtain
\begin{equation}\label{eqn: dsigmap}
\begin{split}
   &\eqref{eqn: dsigmaq} \leq   (C_{T_M,\xi})^{2(l-q)}  \int_{\mathcal{V}_{q}}    \exp\bigg(\frac{(T_{l,q+1}-T_w(x_{q+1}))(1-r_{min})}{2T_w(x_{q+1})[T_{l,q+1}(1-r_{min})+r_{min} T_w(x_{q+1})]}|v_q|^2+\mathcal{C}^{l-q}t|v_q|^2\bigg)                                     \\
    &   e^{[\frac{1}{2T_w(x_{q})}-\frac{1}{2T_w(x_{q+1})}]|v_{q}|^2}  d\sigma(v_q,v_{q-1}).
\end{split}
\end{equation}
We focus on the coefficient of $|v_q|^2$ in~\eqref{eqn: dsigmap}, we derive
\[\frac{(T_{l,q+1}-T_w(x_{q+1}))(1-r_{min})}{2T_w(x_{q+1})[T_{l,q+1}(1-r_{min})+r_{min} T_w(x_{q+1})]}|v_q|^2+[\frac{1}{2T_w(x_{q})}-\frac{1}{2T_w(x_{q+1})}]|v_q|^2\]
\[=       \frac{(T_{l,q+1}-T_w(x_{q+1}))(1-r_{min})-[T_{l,q+1}(1-r_{min})+r_{min} T_w(x_{q+1})]}{2T_w(x_{q+1})[T_{l,q+1}(1-r_{min})+r_{min} T_w(x_{q+1})]}|v_q|^2                             + \frac{|v_q|^2}{2T_w(x_q)}\]
\[=    \frac{ -T_w(x_{q+1})(1-r_{min})-r_{min} T_w(x_{q+1})}{2T_w(x_{q+1})[T_{l,q+1}(1-r_{min})+r_{min} T_w(x_{q+1})]}|v_q|^2                             + \frac{|v_q|^2}{2T_w(x_q)}\]

\[=    \frac{-|v_q|^2}{2[T_{l,q+1}(1-r_{min})+r_{min}T_w(x_{q+1})]}                + \frac{|v_q|^2}{2T_w(x_q)}.\]

By the Definition~\ref{Def:Back time cycle}, $x_{q+1}=x_{q+1}(t,x,v,v_1,\cdots,v_{q})$, thus $T_w(x_{q+1})$ depends on $v_{q}$. In order to explicitly compute the integration over $\mathcal{V}_q$, we need to get rid of the dependence of the $T_w(x_{q+1})$ on $v_{q}$. Then we bound
\begin{equation}\label{eqn: Help to integrate x_p over v_p-1}
\begin{split}
   & \exp\left(\frac{-|v_{q}|^2}{2[T_{l,q+1}(1-r_{min})+r_{min} T_w(x_{q+1})]}\right)\leq \exp\left(\frac{-|v_{q}|^2}{2[T_{l,q+1}(1-r_{min})+r_{min}T_M]}\right)= \exp\left(\frac{-|v_{q}|^2}{2T_{l,q}}\right),
\end{split}
\end{equation}
where we use~\eqref{eqn: definition of T_p}.

Hence by~\eqref{eqn:probability measure}~\eqref{eqn: Formula for R} and~\eqref{eqn: Help to integrate x_p over v_p-1}, we derive
\begin{equation}\label{eqn: int over V_p}
\begin{split}
  &\eqref{eqn: dsigmap}\leq (C_{T_M,\xi})^{2(l-q)} \\
   &  \int_{\mathcal{V}_{q,\perp}} \frac{2}{r_\perp}\frac{|v_{q,\perp}|}{2T_w(x_q)}  e^{-[\frac{1}{2T_{l,q}}-\frac{1}{2T_w(x_q)}-\mathcal{C}^{l-q}t]|v_{q,\perp}|^2}I_0\left(\frac{(1-r_\perp)^{1/2}v_{q,\perp}v_{q-1,\perp}}{T_w(x_q)r_\perp}\right)e^{-\frac{|v_{q,\perp}|^2+(1-r_\perp)|v_{q-1,\perp}|^2}{2T_w(x_q)r_\perp}}  dv_{q,\perp} \\
    & \times\int_{\mathcal{V}_{q,\parallel}}\frac{1}{\pi r_\parallel(2-r_\parallel)(2T_w(x_q))}e^{-[\frac{1}{2T_{l,q}}-\frac{1}{2T_w(x_q)}-\mathcal{C}^{l-q}t]|v_{q,\parallel}|^2}e^{-\frac{1}{2T_w(x_q)}\frac{|v_{q,\parallel}-(1-r_\parallel)v_{q-1,\parallel}|^2}{r_\parallel(2-r_\parallel)}}dv_{q,\parallel}.
\end{split}
\end{equation}
In the third line of~\eqref{eqn: int over V_p}, to apply~\eqref{eqn: coe abc} in Lemma \ref{Lemma: abc}, we set
\[a=-[\frac{1}{2T_{l,q}}-\frac{1}{2T_w(x_{q})}],~ b=\frac{1}{2T_w(x_{q})r_\parallel(2-r_\parallel)},~\e=\mathcal{C}^{l-q}t,~w=(1-r_\parallel)v_{q-1,\parallel}.\]
Taking~\eqref{eqn: coefficient a and b} for comparison, we can replace $\frac{2\xi}{\xi+1}T_M$ by $T_{l,q}$ and replace $t$ by $\mathcal{C}^{l-q}t$. Then we apply the replacement to~\eqref{eqn: b-a-e} and obtain
\[b-a-\e\geq \frac{1}{2T_{l,q}}-\mathcal{C}^{l-q}t\geq \frac{1}{2T_M\frac{2\xi}{\xi+1}}-\mathcal{C}^k t\geq \frac{1}{4T_M},\]
where we take $t^*=t^*(T_M,\xi,\mathcal{C},k)$ to be small enough and $t\leq t^*$. Also we require the $t$ satisfy
\[\frac{\e}{b-a-\e}\leq 4T_M \mathcal{C}^k t\leq 2.\]
We conclude the $t^*$ only depends on the parameter in~\eqref{eqn: t*}. Thus by the same computation as~\eqref{eqn: 1 one} we obtain
\[\frac{b}{b-a-\e}\leq \frac{2T_{l,q}}{T_{l,q}+[\min\{T_w(x)\}-T_{l,q}]r_\parallel(2-r_\parallel)}\leq C_{T_M,\xi},\]
where we use $T_{l,q}\leq \frac{2\xi}{\xi+1}T_M$ from~\eqref{eqn: definition of T_p} and \eqref{eqn: def of r}. $C_{T_M,\xi}$ is defined in~\eqref{eqn: 1 one}.

By the same computation as~\eqref{eqn: 2 two}, we obtain
\[\frac{(a+\e)b}{b-a-\e}= \frac{ab}{b-a}+\frac{ab}{b-a}\frac{\e}{b-a-\e}+\frac{b}{b-a-\e}\e\]
\[\leq  \frac{T_{l,q}-T_w(x_q)}{2T_w(x_q)[T_{l,q}+[T_w(x_q)-T_{l,q}]r_\parallel(2-r_\parallel)]}+\mathcal{C}^{l-q+1}t.\]
Here we use $T_{l,q}\leq \frac{2\xi}{\xi+1}T_M$ and \eqref{eqn: def of r} to obtain
\[\frac{ab}{b-a}\frac{\e}{b-a-\e}+ \frac{b\e}{b-a-\e}\leq \frac{4T_M\big(T_{l,q}-\min\{T_w(x)\}\big)}{2\min\{T_w(x)\}[T_{l,q}+[\min\{T_w(x)\}-T_{l,q}]r_\parallel(2-r_\parallel)]}\mathcal{C}^{l-q}t\]
\[ +\frac{2T_{l,q}}{\frac{2\xi}{\xi+1}T+[\min\{T_w(x)\}-T_{l,q}]r_\parallel(2-r_\parallel)}\mathcal{C}^{l-q}t \leq \mathcal{C}^{l-q+1}t,\]
with $\mathcal{C}$ defined in~\eqref{eqn: cal C}.

Thus by Lemma \ref{Lemma: abc} with $w=(1-r_\parallel)v_{q-1,\parallel}$, the third line of~\eqref{eqn: int over V_p} is bounded by
\[C_{T_M,\xi}\exp\left(\big[ \frac{[T_{l,q}-T_w(x_{q})]}{2T_w(x_{q})[T_{l,q}(1-r_\parallel)^2+r(2-r_\parallel) T_w(x_{q})]} + \mathcal{C}^{l-q+1}t\big]|(1-r_\parallel)v_{q-1,\parallel}|^2\right)\]
\begin{equation}\label{eqn: Vq para}
\leq C_{T_M,\xi}\exp\left(\big[ \frac{[T_{l,q}-T_w(x_{q})][1-r_{min}]}{2T_w(x_{q})[T_{l,q}(1-r_{min})+r_{min} T_w(x_{q})]} + \mathcal{C}^{l-q+1}t\big]|v_{q-1,\parallel}|^2\right).
\end{equation}
By the same computation the second line of~\eqref{eqn: int over V_p} is bounded by
\begin{equation}\label{eqn: Vq perp}
C_{T_M,\xi}\exp\left(\big[ \frac{[T_{l,q}-T_w(x_{q})][1-r_{min}]}{2T_w(x_{q})[T_{l,q}(1-r_{min})+r_{min} T_w(x_{q})]} + \mathcal{C}^{l-q+1}t\big]|v_{q-1,\perp}|^2\right).
\end{equation}
By~\eqref{eqn: Vq para} and~\eqref{eqn: Vq perp}, we derive that
\[\eqref{eqn: int over V_p}\leq (C_{T_M,\xi})^{2(l-q+1)}\exp\left(\big[ \frac{[T_{l,q}-T_w(x_{q})][1-r_{min}]}{2T_w(x_{q})[T_{l,q}(1-r_{min})+r_{min} T_w(x_{q})]} + \mathcal{C}^{l-q+1}t\big]|v_{q-1}|^2\right)=(C_{T_M,\xi})^{2(l-q+1)}\mathcal{A}_{l,q},\]
which is consistent with~\eqref{eqn: boundedness for l-p+1 fold integration} with $p=q$. The induction is valid we derive~\eqref{eqn: boundedness for l-p+1 fold integration}.

Now we focus on~\eqref{eqn: structure}. The first inequality in~\eqref{eqn: structure} follows directly from~\eqref{eqn: boundedness for l-p+1 fold integration} and~\eqref{eqn: ppt for Phi}. For the second inequality, by~\eqref{eqn: upsilon} we have
\[(C_{T_M,\xi})^{2(l-p'+1)} \mathcal{A}_{l,p'}\int_{\prod_{j=p}^{p'-1} \mathcal{V}_j}  \mathbf{1}_{\{t_l>0\}} d\Upsilon_p^{p'-1}\]
\begin{equation}\label{eqn: second ineq}
\leq (C_{T_M,\xi})^{2(l-p'+1)} \mathcal{A}_{l,p'}\int_{\prod_{j=p}^{p'-2} \mathcal{V}_j} \int_{\mathcal{V}_{p'-1}} \mathbf{1}_{\{t_l>0\}}   e^{[\frac{1}{2T_w(x_{p'-1})}-\frac{1}{2T_w(x_{p'})}]|v_{p'-1}|^2} d\sigma(v_{p'-1},v_{p'-2})  d\Upsilon_p^{p'-2}.
\end{equation}
In the proof for~\eqref{eqn: boundedness for l-p+1 fold integration} we have
\[\eqref{eqn: dsigmaq}\leq \eqref{eqn: dsigmap}\leq \eqref{eqn: int over V_p} \leq (C_{T_M,\xi})^{2(l-q+1)}\mathcal{A}_{l,q}.\]
Then by replacing $q$ by $p'-1$ in the estimate $~\eqref{eqn: dsigmaq}\leq (C_{T_M,\xi})^{2(l-q+1)}\mathcal{A}_{l,q}$ we have

\[\eqref{eqn: second ineq}\leq (C_{T_M,\xi})^{2(l-p'+2)}\mathcal{A}_{l,p'-1}\int_{\prod_{j=p}^{p'-2} \mathcal{V}_j} \mathbf{1}_{\{t_l>0\}}  d\Upsilon_p^{p'-2}   .\]
Keep doing this computation until integrating over $\mathcal{V}_p$ we obtain the second inequality in~\eqref{eqn: structure}.

\end{proof}

The next result is the Lemma \ref{lemma: t^k}, which is the smallness of the last term of~\eqref{eqn: formula for H}.
\begin{lemma}\label{lemma: t^k}
Assume
\begin{equation}\label{eqn: assume T}
\frac{\min(T_w(x))}{T_M}>\max\Big(\frac{1-r_\parallel}{2-r_\parallel},\frac{\sqrt{1-r_\perp}-(1-r_\perp)}{r_\perp}\Big).
\end{equation}
 For the last term of~\eqref{eqn: formula for H}, there exists
\begin{equation}\label{eqn: k_0 dependence}
k_0=k_0(\Omega,C_{T_M,\xi},\mathcal{C},T_M,r_\perp,r_\parallel,\min\{T_w(x)\},\xi)\gg 1,
\end{equation}
\begin{equation}\label{eqn: t'}
 t'=t'(k_0,\xi,T_M,\min\{T_w(x)\},\mathcal{C},r_\perp,r_\parallel)\ll 1
\end{equation}
such that for all $t\in [0,t']$, we have
\begin{equation}\label{eqn: 1/2 decay}
   \int_{\prod_{j=1}^{k_0-1}\mathcal{V}_j} \mathbf{1}_{\{t_{k_0}>0\}}d\Sigma_{k_0-1,m}^{k_0}(t_{k_0})  \leq (\frac{1}{2})^{k_0} \mathcal{A}_{k_0-1,1}.
\end{equation}
where $\mathcal{A}_{k_0-1,1}$ is defined in~\eqref{eqn: Elp}.
\end{lemma}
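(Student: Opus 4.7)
The plan is to decompose each integration set $\mathcal{V}_j$ using two smallness parameters, a width $\delta>0$ and a weight $\eta=\eta(r_\parallel,r_\perp)>0$, and to introduce the regular subset
\[
\mathcal{V}_j^R := \{v_j \in \mathcal{V}_j : |n(x_j)\cdot v_j| > \eta\delta,\ |v_j|\leq (\eta\delta)^{-1}\}.
\]
I would choose $\eta$ according to the heuristic in the introduction so that, on the complement, the inequality $|v_{j,\parallel}-(1-r_\parallel)v_{j-1,\parallel}|<\delta^{-1}$ combined with $|v_j|>(\eta\delta)^{-1}$ forces $|v_{j-1,\parallel}|\geq |v_{j,\parallel}|+\delta^{-1}$, and analogously for the normal components using Lemma~\ref{Lemma: perp abc} and the $I_0$ estimates in the appendix. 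When $d\sigma(v_j,v_{j-1})$ is expanded via~\eqref{eqn: Formula for R}, the effective tangential coefficient in front of $v_{j-1,\parallel}$ involves $T_w(x_j)$ and $T_M$; the hypothesis~\eqref{eqn: assume T} is exactly what keeps this effective coefficient strictly below $1$, so the $\eta$-mechanism survives through one bounce.

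First I would execute the geometric "many-bounces" step on the regular part, following the strategy of~\cite{G,CKL}. If $v_j \in \mathcal{V}_j^R$ for every $j=1,\dots,k_0-1$, then $|v_j|\leq (\eta\delta)^{-1}$ and $|n(x_j)\cdot v_j|>\eta\delta$, which combined with the $C^2$ regularity of $\partial\Omega$ yield a uniform lower bound $t_{j-1}-t_j\geq c(\Omega,\eta\delta)>0$ on consecutive collision times. Picking $t' < (k_0-1)c(\Omega,\eta\delta)/2$ then forces $t_{k_0}<0$, contradicting $\mathbf{1}_{\{t_{k_0}>0\}}=1$. Hence at least one, and by pigeonhole at least $k_0/2$, of the indices $j$ must fall outside $\mathcal{V}_j^R$.

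Next, on $\mathcal{V}_j\setminus \mathcal{V}_j^R$, I would split each fold into the \emph{spread} subcase $|v_{j,\parallel}-(1-r_\parallel)v_{j-1,\parallel}|>\delta^{-1}$ (with the analogous normal condition) and the \emph{concentrated} complementary subcase. In the spread subcase the Gaussian factor in~\eqref{eqn: Formula for R} gives an $O(\delta)$ multiplier upon integration against $d\sigma$. In the concentrated subcase the $\eta$-mechanism produces $|v_{j-1}|\gtrsim (\eta\delta)^{-1}$; applying~\eqref{eqn: structure} of Lemma~\ref{lemma: boundedness} to the preceding fold, this huge velocity is multiplied by an exponent that combines the negative $-1/(2T_{l,j-1})$ from the normalization~\eqref{eqn: Help to integrate x_p over v_p-1} with the $\mathcal{A}_{l,j-1}$-coefficient, and under~\eqref{eqn: assume T} this net exponent is strictly negative, producing a factor $\exp(-c\delta^{-2})$ that dominates all the accumulated growth constants $(C_{T_M,\xi})^{2k_0}$ and $e^{\mathcal{C}^{k_0}t|v|^2}$.

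Combining both subcases, every index outside $\mathcal{V}_j^R$ contributes at worst a factor $\max(C\delta, e^{-c\delta^{-2}})\leq C\delta$, so the full multiple integral is bounded by $(C\delta)^{k_0/2}\mathcal{A}_{k_0-1,1}$ after Lemma~\ref{lemma: boundedness} is applied to the remaining regular folds. The order of choices is then: first pick $\delta$ small enough that $C\delta \cdot (C_{T_M,\xi})^{2}\mathcal{C}<1/4$, which together with the geometric constant $c(\Omega,\eta\delta)$ fixes $k_0$ as in~\eqref{eqn: k_0 dependence}; then pick $t'$ as in~\eqref{eqn: t'} to activate the many-bounces step. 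I expect the main obstacle to be the combinatorial bookkeeping over the $\sim 2^{k_0}$ possible configurations of regular/spread/concentrated indices, together with the interlocked verification that~\eqref{eqn: assume T} is simultaneously strong enough for the $\eta$-mechanism to propagate through one bounce and for the exponent collected in the concentrated subcase to be genuinely negative, so that the $\exp(-c\delta^{-2})$ gain materializes instead of being swamped by the $\mathcal{A}_{l,p}$ factors of Lemma~\ref{lemma: boundedness}.
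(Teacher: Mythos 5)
Your overall architecture matches the paper's: the same kind of regular set $\mathcal{V}_j^R$ (the paper uses $\mathcal{V}_j^{\frac{1-\eta}{2(1+\eta)}\delta}$), the same geometric collision-time bound forcing many bad indices, the same $\eta$-mechanism, the same $O(\delta)$ gain in the "spread" subcase (Lemma~\ref{Lemma: (2)}), and the same kind of combinatorial bookkeeping at the end. But there is a genuine gap in your handling of the "concentrated" subcase. You claim that once the concentrated condition forces $|v_{j-1,\parallel}|\gtrsim(\eta\delta)^{-1}$, integrating out $v_{j-1}$ produces a factor $\exp(-c\delta^{-2})$ because the accumulated exponent on $|v_{j-1}|^2$ is negative. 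That is not what happens: after completing the square, the Gaussian $d\sigma(v_{j-1},v_{j-2})$ is peaked at $v_{j-1,\parallel}\approx\eta_{j-1,\parallel}v_{j-2,\parallel}$, so the "negative exponent" applies only to the \emph{centered} variable $v_{j-1,\parallel}-\eta_{j-1,\parallel}v_{j-2,\parallel}$, not to $|v_{j-1,\parallel}|$ itself. When $|v_{j-2,\parallel}|$ is also large, the peak sits at large $|v_{j-1,\parallel}|$ and the constraint $|v_{j-1,\parallel}|>(\eta\delta)^{-1}$ produces no exponential smallness whatsoever. A chain of consecutive concentrated indices therefore lets the velocity grow backward through the cycle with \emph{no} per-index gain, and your claim that "every index outside $\mathcal{V}_j^R$ contributes at worst a factor $\max(C\delta,e^{-c\delta^{-2}})$" fails exactly for the concentrated ones.

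The paper resolves this with a telescoping/balance argument that is missing from your proposal. Because any maximal run of bad indices is bracketed by two regular ones (with bounded velocities), the cumulative downward jumps $|v_{j-1,\parallel}|>|v_{j,\parallel}|+\delta^{-1}$ forced by the concentrated indices (the paper's Set1, via Lemma~\ref{Lemma:  (a)(c)}) must be compensated somewhere in the run by upward jumps $|v_{j,\parallel}|>|v_{j-1,\parallel}|$ (the paper's Set2). It is at those \emph{compensating} indices — where $|v_{j,\parallel}-\eta_{j,\parallel}v_{j-1,\parallel}|$ is genuinely large and~\eqref{eqn: coe abc smaller} applies — that the Gaussian gains $e^{-a_i^2/4T_M}$ appear; their product is then estimated by Cauchy--Schwarz against $\sum a_i\gtrsim L\delta^{-1}$ (see~\eqref{eqn: ai sum} and the Set1/Set2/Set3 partition in Lemma~\ref{Lemma: Step3}) to produce $e^{-L\delta^{-1}}$. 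Without this indirect extraction, your bound $(C\delta)^{k_0/2}$ does not follow, and the proof does not close.
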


\begin{remark}
The difference between this lemma and Lemma \ref{lemma: boundedness} is that we have the small term $(\frac{1}{2})^{k_0}$. This lemma implies when $k=k_0$ is large enough, the measure of the last term of~\eqref{eqn: formula for H} is small.

\end{remark}

We need several lemmas to prove it.

\begin{lemma}\label{Lemma: (2)}
For $1\leq i\leq k-1$, if
\begin{equation}\label{eqn: 2 condition}
|v_i\cdot n(x_i)|<\delta,
\end{equation}
then
\begin{equation}\label{eqn: 2}
    \int_{\prod_{j=i}^{k-1} \mathcal{V}_j} \mathbf{1}_{\{v_i\in \mathcal{V}_i:|v_i\cdot n(x_i)|\delta\}}   \mathbf{1}_{\{t_k>0\}} d\Phi_{i,m}^{k,k-1}(t_k) \leq \delta    (C_{T_M,\xi})^{2(k-i)}\mathcal{A}_{k-1,i}.
\end{equation}

If
\begin{equation}\label{eqn: b condition}
|v_{i,\parallel}-\eta_{i,\parallel}v_{i-1,\parallel}|>\delta^{-1},
\end{equation}
then
\begin{equation}\label{eqn: case b}
\int_{\prod_{j=i}^{k-1} \mathcal{V}_j}\mathbf{1}_{\{t_k>0\}}\mathbf{1}_{\{|v_{i,\parallel}-\eta_{i,\parallel}v_{i-1,\parallel}|>\delta^{-1}\}}d\Phi_{i,m}^{k,k-1}(t_k)
 \leq \delta    (C_{T_M,\xi})^{2(k-i)}\mathcal{A}_{k-1,i}.
\end{equation}
Here $\eta_{i,\parallel}$ is a constant defined in~\eqref{eqn: eta i para}.

If
\begin{equation}\label{eqn: d condition}
|v_{i,\perp}-\eta_{i,\perp}v_{i-1,\perp}|>\delta^{-1},
\end{equation}
then
\begin{equation}\label{eqn: case d}
\int_{\prod_{j=i}^{k-1} \mathcal{V}_j}\mathbf{1}_{\{t_k>0\}}\mathbf{1}_{\{|v_{i,\perp}-\eta_{i,\perp}v_{i-1,\perp}|>\delta^{-1}\}}d\Phi_{i,m}^{k,k-1}(t_k)
 \leq \delta    (C_{T_M,\xi})^{2(k-i)}\mathcal{A}_{k-1,i}.
\end{equation}
Here $\eta_{i,\perp}$ is a constant defined in~\eqref{eqn: eta i perp}.

\end{lemma}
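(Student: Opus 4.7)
The plan is to reduce each of the three estimates to a single-fold integration over $\mathcal{V}_i$ against the measure induced by $d\sigma(v_i, v_{i-1})$, and then to extract a factor of $\delta$ from the extra pointwise constraint on $v_i$ by mimicking the Gaussian/Bessel computations already performed in the proof of Lemma~\ref{lemma: boundedness}. The reduction step is identical for all three cases: applying \eqref{eqn: structure} with $p=i$, $p'=i+1$ and $l=k-1$ peels off the integrations over $\mathcal{V}_{i+1},\dots,\mathcal{V}_{k-1}$ at the cost of the usual factor $(C_{T_M,\xi})^{2(k-1-i)}\mathcal{A}_{k-1,i+1}$, leaving a single integral over $\mathcal{V}_i$ of the constraint indicator against $e^{[\frac{1}{2T_w(x_i)} - \frac{1}{2T_w(x_{i+1})}]|v_i|^2}d\sigma(v_i,v_{i-1})$ multiplied by the $v_i$-dependent weight contained in $\mathcal{A}_{k-1,i+1}$. (When $i=k-1$ the same accounting applies directly to the $e^{-|v_{k-1}|^2(t_{k-1}-t_k)}\tilde\mu$ factor in the second line of \eqref{eqn: big phi}, with the indicator playing the same role.) From here we proceed precisely as in the inductive step \eqref{eqn: int over V_p}--\eqref{eqn: Vq perp}, separating into $v_{i,\perp}$ and $v_{i,\parallel}$ integrations and absorbing the $T_w(x_{i+1})$ dependence through \eqref{eqn: Help to integrate x_p over v_p-1}.

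For Case 1, the small factor is structural: the indicator $\mathbf{1}_{\{|v_i\cdot n(x_i)|<\delta\}}$ restricts the normal component integration to $v_{i,\perp}\in(0,\delta)$, and the kernel already carries an explicit factor $\tfrac{|v_{i,\perp}|}{2T_w(x_i)}$ inside $d\sigma(v_i,v_{i-1})$. Bounding $|v_{i,\perp}|\leq \delta$ and then extending back to $\mathbb{R}^+$ reproduces the normal integral of the proof of Lemma~\ref{lemma: boundedness} scaled by $\delta$; the tangential integration is unchanged and yields the $v_{i,\parallel}$ part of $\mathcal{A}_{k-1,i}$. Combining gives \eqref{eqn: 2}.

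For Case 2, the constant $\eta_{i,\parallel}$ is designed (cf.\ \eqref{eqn: eta i para}) so that after completing the square in the tangential Gaussian
$e^{[\frac{1}{2T_w(x_i)}-\frac{1}{2T_w(x_{i+1})}-\mathcal C^{k-1-i}t]|v_{i,\parallel}|^2}e^{-\frac{1}{2T_w(x_i)r_\parallel(2-r_\parallel)}|v_{i,\parallel}-(1-r_\parallel)v_{i-1,\parallel}|^2}$
(which corresponds to the setup of Lemma~\ref{Lemma: abc} used at \eqref{eqn: int over V_p}), the integrand factors as a centred Gaussian in $v_{i,\parallel}-\eta_{i,\parallel}v_{i-1,\parallel}$ times the usual $v_{i-1,\parallel}$-weight appearing in $\mathcal{A}_{k-1,i}$. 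Under the constraint $|v_{i,\parallel}-\eta_{i,\parallel}v_{i-1,\parallel}|>\delta^{-1}$ this Gaussian is pointwise bounded by $e^{-c\delta^{-2}}$ with $c\gtrsim 1/(4T_M)$ by the hypothesis~\eqref{eqn: assume T} (the same lower bound $b-a-\e\geq \tfrac{1}{4T_M}$ that produced \eqref{eqn: 1 one}), and $e^{-c\delta^{-2}}\leq \delta$ for $\delta$ small. Pulling this estimate outside and performing the remaining (unconstrained) integration as in Lemma~\ref{lemma: boundedness} yields \eqref{eqn: case b}.

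For Case 3, the argument is parallel to Case 2 in spirit but the obstacle is the Bessel factor $I_0$ in the normal part of $d\sigma(v_i,v_{i-1})$, which prevents a direct completion-of-square in $v_{i,\perp}$. The definition of $\eta_{i,\perp}$ in~\eqref{eqn: eta i perp} is the analogous shift produced by the refined estimate for $I_0$-weighted Gaussians deferred to the appendix; this estimate plays the role of Lemma~\ref{Lemma: perp abc} but with the quantitative information that when $|v_{i,\perp}-\eta_{i,\perp}v_{i-1,\perp}|>\delta^{-1}$ the resulting bound picks up a factor $\delta$. Granted that appendix lemma, the proof of \eqref{eqn: case d} proceeds exactly as in Case 2. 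The main technical obstacle of the whole lemma is therefore the Case 3 refinement of Lemma~\ref{Lemma: perp abc}: one must show that the tail behaviour of the $I_0$-weighted kernel outside the region $|v_{i,\perp}-\eta_{i,\perp}v_{i-1,\perp}|\leq\delta^{-1}$ is genuinely Gaussian, despite the non-factorisation caused by $I_0$. This is where the constraint~\eqref{eqn: assume T} enters in its $r_\perp$ form, ensuring the exponent $b-a-\e$ remains strictly positive after absorbing the $I_0$ contribution.
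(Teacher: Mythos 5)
Your overall strategy is the same as the paper's: peel off $\mathcal{V}_{i+1},\dots,\mathcal{V}_{k-1}$ via Lemma~\ref{lemma: boundedness} (the paper invokes \eqref{eqn: int over V_p} directly, you route through \eqref{eqn: structure}, but these coincide), split the remaining $\mathcal{V}_i$ integral into $\perp$/$\parallel$ parts, recognize $\eta_{i,\parallel}$ and $\eta_{i,\perp}$ as the centers of the completed squares, and read off the $\delta$ factor from the Gaussian tail estimates of Lemmas~\ref{Lemma: abc} and \ref{Lemma: integrate normal small}. Cases~2 and~3 of your argument are essentially correct.

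Case~1, however, has a genuine gap. You propose to bound the explicit $|v_{i,\perp}|$ prefactor by $\delta$ and "extend back to $\mathbb{R}^+$", claiming this "reproduces the normal integral of Lemma~\ref{lemma: boundedness} scaled by $\delta$." Once the $v_{i,\perp}$ factor is removed, the integral you are left with is $\int_0^\infty e^{-\alpha v^2}I_0(\beta v w)\,dv$, which is \emph{not} the integral computed in Lemma~\ref{Lemma: perp abc} (that lemma relies on the closed form $\int_0^\infty v\,e^{-\alpha v^2}I_0(\beta v w)\,dv=\tfrac1{2\alpha}e^{\beta^2w^2/4\alpha}$). Your version produces a different prefactor of order $\alpha^{-1/2}$ rather than $\alpha^{-1}$, so you do not actually recover the stated constant $C_{T_M,\xi}$ without an additional, currently unstated estimate. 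The correct step, as in \eqref{eqn: coe abc perp small}, keeps the $v$ factor, uses $I_0(y)\le e^y$ to obtain a centered Gaussian, bounds that Gaussian pointwise by $1$, and then uses $\int_0^\delta v\,dv<\delta$; the smallness of the domain supplies the $\delta$, not a pointwise bound on $|v_{i,\perp}|$.

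A second, minor point: in Cases~2 and~3 you attribute the lower bound $b-a-\e\ge\tfrac1{4T_M}$ (and the positivity of the exponent) to hypothesis~\eqref{eqn: assume T}. It actually follows from $\xi>1$ together with the smallness of $t$ (cf.~\eqref{eqn: b-a-e}, \eqref{eqn: B i para}); condition~\eqref{eqn: assume T} is used elsewhere, namely to force $\eta_{i,\parallel},\eta_{i,\perp}<1$ in Lemma~\ref{Lemma:  (a)(c)}, which matters for the companion estimates but not for the three bounds proved here.
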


\begin{proof}
First we focus on~\eqref{eqn: 2}. By~\eqref{eqn: int over V_p} in Lemma \ref{lemma: boundedness}, we can replace $l$ by $k-1$ and replace $q$ by $i$ to obtain
\begin{equation}\label{eqn: int V_i}
\begin{split}
  & \int_{\prod_{j=i}^{k-1} \mathcal{V}_j}    \mathbf{1}_{\{t_k>0\}} d\Phi_{i,m}^{k,k-1}(t_k)\leq(C_{T_M,\xi})^{2(k-i)} \\
   &  \int_{\mathcal{V}_{i,\perp}} \frac{2}{r_\perp}\frac{|v_{i,\perp}|}{2T_w(x_i)}  e^{-[\frac{1}{2T_{k-1,i}}-\frac{1}{2T_w(x_i)}-\mathcal{C}^{k-i}t]|v_{i,\perp}|^2}I_0\left(\frac{(1-r_\perp)^{1/2}v_{i,\perp}v_{i-1,\perp}}{T_w(x_i)r_\perp}\right)e^{\frac{|v_{i,\perp}|^2+(1-r_\perp)|v_{i-1,\perp}|^2}{2T_w(x)r_\perp}}  dv_{i,\perp} \\
    & \times\int_{\mathcal{V}_{i,\parallel}}\frac{1}{\pi r_\parallel(2-r_\parallel)(2T_w(x_i))}e^{-[\frac{1}{2T_{k-1,i}}-\frac{1}{2T_w(x_i)}-\mathcal{C}^{k-i}t]|v_{i,\parallel}|^2}e^{-\frac{1}{2T_w(x_i)}\frac{|v_{i,\parallel}-(1-r_\parallel)v_{i-1,\parallel}|^2}{r_\parallel(2-r_\parallel)}}dv_{i,\parallel}.
\end{split}
\end{equation}
Under the condition~\eqref{eqn: 2 condition}, we consider the second line of~\eqref{eqn: int V_i} with integrating over $\{v_{i,\perp}\in \mathcal{V}_{i,\perp}:|v_i\cdot n(x_i)|<\frac{1-\eta}{2(1+\eta)}\delta\}$. To apply~\eqref{eqn: coe abc perp small} in Lemma \ref{Lemma: perp abc}, we set
\[a=-[\frac{1}{2T_{k-1,i}}-\frac{1}{2T_w(x_{i})}],~ b=\frac{1}{2T_w(x_{i})r_\perp},~\e=\mathcal{C}^{k-i}t,~w=\sqrt{1-r_\perp}v_{i-1,\perp}.\]
Under the condition $|v_i\cdot n(x_i)|<\frac{1-\eta}{2(1+\eta)}\delta$, applying~\eqref{eqn: coe abc perp small} in Lemma \ref{Lemma: perp abc} and using~\eqref{eqn: Vq perp} with $q=i,l=k-1$, we bound the second line of~\eqref{eqn: int V_i} by
\begin{equation}\label{eqn: perp small}
\delta C_{T_M,\xi}\exp\left(\big[ \frac{[T_{k-1,i}-T_w(x_{i})][1-r_{min}]}{2T_w(x_{i})[T_{k-1,i}(1-r_{min})+r_{min} T_w(x_{i})]} + \mathcal{C}^{k-i}t\big]|v_{i-1,\perp}|^2\right).
\end{equation}
Taking~\eqref{eqn: Vq perp} for comparison, we conclude the second line of~\eqref{eqn: int V_i} provides one more constant term $\delta$. The third line of~\eqref{eqn: int V_i} is bounded by~\eqref{eqn: Vq para} with $q=i,l=k-1$. Therefore, we derive~\eqref{eqn: 2}.

Then we focus on~\eqref{eqn: case b}. We consider the third line of~\eqref{eqn: int V_i}. To apply~\eqref{eqn: coe abc small} in Lemma \ref{Lemma: abc}, we set
\begin{equation}\label{eqn: abe}
     a=-\frac{1}{2T_{k-1,i}} +\frac{1}{2T_w(x_i)},\quad b=\frac{1}{2T_w(x_i)r_\parallel(2-r_\parallel)},\quad \e=\mathcal{C}^{k-i}t,~w=(1-r_\parallel)v_{i-1,\parallel}.
\end{equation}
We define
\begin{equation}\label{eqn: B i para def}
B_{i,\parallel}:=b-a-\e.
\end{equation}
In regard to~\eqref{eqn: coe abc small},
\[    \frac{b}{b-a-\e}w=\frac{b}{b-a}[1+\frac{\e}{b-a-\e}] w. \]
By~\eqref{eqn: abe},
\[\frac{b}{b-a}=\frac{T_{k-1,i}}{T_{k-1,i}(1-r_\parallel)^2+T_w(x_i)r_\parallel(2-r_\parallel)},\quad \frac{\e}{b-a-\e}=\frac{\mathcal{C}^{k-i}t}{B_{i,\parallel}}.\]
Thus we obtain
\begin{equation}\label{eqn: constant for the t^k}
    \frac{b}{b-a-\e}w=\eta_{i,\parallel}v_{i-1,\parallel},
\end{equation}
where we define
\begin{equation}\label{eqn: eta i para}
\eta_{i,\parallel}:=\frac{T_{k-1,i}[1+\mathcal{C}^{k-i}t/B_{i,\parallel}]}{T_{k-1,i}(1-r_\parallel)^2+T_w(x_i)r_\parallel(2-r_\parallel)}(1-r_\parallel).
\end{equation}
Thus under the condition~\eqref{eqn: b condition}, applying~\eqref{eqn: coe abc small} in Lemma \ref{eqn: coe abc} with $\frac{b}{b-a-\e}w=\eta_{i,\parallel}v_{i-1,\parallel}$ and using~\eqref{eqn: Vq para} with $q=i,l=k-1$, we bound the third line of~\eqref{eqn: int V_i} by
\[\delta C_{T_M,\xi}\exp\left(\big[ \frac{[T_{k-1,i}-T_w(x_{i})][1-r_{min}]}{2T_w(x_{i})[T_{k-1,i}(1-r_{min})+r_{min} T_w(x_{i})]} + \mathcal{C}^{k-i}t\big]|v_{i-1,\parallel}|^2\right).\]
By the same computation in Lemma \ref{Lemma: (2)}, we derive~\eqref{eqn: case b} because of the extra constant $\delta$.

Last we focus on~\eqref{eqn: case d}. We consider the second line of~\eqref{eqn: int V_i} with integrating over $\{v_{i,\perp}:v_{i,\perp}\in \mathcal{V}_{i,\perp},|v_{i,\perp}|>\frac{1+\eta}{1-\eta}\delta^{-1}\}$. To apply~\eqref{eqn: coe abc perp small} in Lemma \ref{Lemma: integrate normal small}, we set
\begin{equation}\label{eqn: abe perp}
     a=-\frac{1}{2T_{k-1,i}} +\frac{1}{2T_w(x_i)},\quad b=\frac{1}{2T_w(x_i)r_\perp},\quad \e=\mathcal{C}^{k-i}t,~ w=\sqrt{1-r_\perp}v_{i-1,\perp}.
\end{equation}
Define
\begin{equation}\label{eqn: B i perp}
B_{i,\perp}:=b-a-\e.
\end{equation}
By the same computation as~\eqref{eqn: constant for the t^k},
\[\frac{b}{b-a-\e}w=\eta_{i,\perp}v_{i-1,\perp},\]
where we define
\begin{equation}\label{eqn: eta i perp}
\eta_{i,\perp}:= \frac{T_{k-1,i}[1+\frac{\mathcal{C}^{k-i}t}{B_{i,\perp}}]}{T_{k-1,i}(1-r_\perp)+T_w(x_i)r_\perp}\sqrt{1-r_\perp}.
\end{equation}
Thus under the condition~\eqref{eqn: d condition}, applying~\eqref{eqn: coe perp small 2} in Lemma~\ref{Lemma: integrate normal small} with $\frac{b}{b-a-\e}w=\eta_{i,\perp}v_{i-1,\perp}$ and using~\eqref{eqn: Vq perp} with $q=i,l=k-1$, we bound the second line of~\eqref{eqn: int V_i} by
\[\delta C_{T_M,\xi}\exp\left(\big[ \frac{[T_{k-1,i}-T_w(x_{i})][1-r_{min}]}{2T_w(x_{i})[T_{k-1,i}(1-r_{min})+r_{min} T_w(x_{i})]} + \mathcal{C}^{k-i}t\big]|v_{i-1,\perp}|^2\right).\]
Then we derive~\eqref{eqn: case b} because of the extra constant $\delta$.

\end{proof}

\begin{lemma}\label{Lemma:  (a)(c)}
For $\eta_{i,\parallel}$ and $\eta_{i,\perp}$ defined in Lemma \ref{Lemma: (2)}, we suppose there exists $\eta<1$ such that
\begin{equation}\label{eqn: eta condition}
  \max\{\eta_{i,\parallel},\eta_{i,\perp}\}<\eta<1.
\end{equation}
Then If
\begin{equation}\label{eqn: (a) condition}
  |v_{i,\parallel}|>\frac{1+\eta}{1-\eta}\delta^{-1} \text{ and } |v_{i,\parallel}-\eta_{i,\parallel}v_{i-1,\parallel}|<\delta^{-1},
\end{equation}
we have
\begin{equation}\label{eqn: (a)}
  |v_{i-1,\parallel}|>|v_{i,\parallel}|+\delta^{-1}.
\end{equation}

Also if
\begin{equation}\label{eqn: (c) condition}
  |v_{i,\perp}|>\frac{1+\eta}{1-\eta}\delta^{-1} \text{ and } |v_{i,\perp}-\eta_{i,\perp}v_{i-1,\perp}|<\delta^{-1},
\end{equation}
then we have
\begin{equation}\label{eqn: (c)}
  |v_{i-1,\perp}|>|v_{i,\perp}|+\delta^{-1}.
\end{equation}

\end{lemma}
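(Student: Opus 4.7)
The proof is a direct application of the reverse triangle inequality, and the threshold $\frac{1+\eta}{1-\eta}\delta^{-1}$ appearing in the hypotheses~\eqref{eqn: (a) condition} and~\eqref{eqn: (c) condition} is exactly what is needed to make the desired $\delta^{-1}$-gain sharp. No delicate estimate is required; the lemma simply records the algebraic fact that justifies the decomposition argument used later in Lemma~\ref{lemma: t^k}.

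For the tangential case~\eqref{eqn: (a)}, I start from $|v_{i,\parallel}-\eta_{i,\parallel}v_{i-1,\parallel}|<\delta^{-1}$ and apply the reverse triangle inequality to obtain $|v_{i,\parallel}|-\eta_{i,\parallel}|v_{i-1,\parallel}|\leq|v_{i,\parallel}-\eta_{i,\parallel}v_{i-1,\parallel}|<\delta^{-1}$; since the hypothesis $|v_{i,\parallel}|>\frac{1+\eta}{1-\eta}\delta^{-1}>\delta^{-1}$ forces the left-hand side's leading term to dominate, I can rearrange to $\eta_{i,\parallel}|v_{i-1,\parallel}|>|v_{i,\parallel}|-\delta^{-1}$. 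Dividing by $\eta_{i,\parallel}<\eta$ (using~\eqref{eqn: eta condition}) yields $|v_{i-1,\parallel}|>\eta^{-1}(|v_{i,\parallel}|-\delta^{-1})$.

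It remains to verify $\eta^{-1}(|v_{i,\parallel}|-\delta^{-1})>|v_{i,\parallel}|+\delta^{-1}$, which, after clearing denominators (using $\eta>0$), reduces to $(1-\eta)|v_{i,\parallel}|>(1+\eta)\delta^{-1}$, i.e., exactly the hypothesis~\eqref{eqn: (a) condition} on $|v_{i,\parallel}|$. Combining the last two displays gives~\eqref{eqn: (a)}. The perpendicular case~\eqref{eqn: (c)} is proved line-for-line the same way: I replace $\parallel$ by $\perp$ throughout, use $\eta_{i,\perp}<\eta<1$ from~\eqref{eqn: eta condition}, and invoke the same two-step argument (reverse triangle inequality, then the quantitative threshold). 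There is no real obstacle; the entire point of the lemma is that the thresholds in~\eqref{eqn: (a) condition} and~\eqref{eqn: (c) condition} were calibrated precisely so that the contraction factor $\eta<1$ together with the reverse triangle inequality produces a strict $\delta^{-1}$-growth from $|v_i|$ to $|v_{i-1}|$, which is the mechanism that later forces the tangential/normal component along a back-time cycle to blow up whenever the ``bad'' event $|v_{i,\parallel}-\eta_{i,\parallel}v_{i-1,\parallel}|<\delta^{-1}$ (or its perpendicular analogue) occurs repeatedly.
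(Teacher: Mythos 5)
Your proof is correct and follows essentially the same route as the paper: both rely on the reverse triangle inequality applied to $|v_{i,\parallel}-\eta_{i,\parallel}v_{i-1,\parallel}|<\delta^{-1}$ together with the calibration of the threshold $\frac{1+\eta}{1-\eta}\delta^{-1}$. The only cosmetic difference is that you substitute $\eta$ for $\eta_{i,\parallel}$ one step earlier (which is valid since you check $|v_{i,\parallel}|-\delta^{-1}>0$ before monotonizing in $\eta$), whereas the paper carries $\eta_{i,\parallel}$ one more line and uses $\frac{1+\eta}{1-\eta}\geq\frac{1+\eta_{i,\parallel}}{1-\eta_{i,\parallel}}$; the content is identical.
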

\begin{remark}
Lemma \ref{Lemma: (2)} includes the cases that are controllable because of the small magnitude number $\delta$, which is the "good" factor for us to establish the Lemma \ref{lemma: t^k}. This lemma discusses those "bad" cases, which are the main difficulty since they do not directly provide $\delta$.
\end{remark}

\begin{proof}
Under the condition~\eqref{eqn: (a) condition} we have
\[\eta_{i,\parallel}|v_{i-1,\parallel}|> |v_{i,\parallel}|-\delta^{-1}.\]
Thus we derive
\[|v_{i-1,\parallel}|>|v_{i,\parallel}|+\frac{1-\eta_{i,\parallel}}{\eta_{i,\parallel}}|v_{i,\parallel}|-\frac{1}{\eta_{i,\parallel}}\delta^{-1}\]
\[>|v_{i,\parallel}|+\frac{1-\eta_{i,\parallel}}{\eta_{i,\parallel}}\frac{1+\eta}{1-\eta}\delta^{-1}-\frac{1}{\eta_{i,\parallel}}\delta^{-1}
\]
\[>|v_{i,\parallel}|+\frac{1-\eta_{i,\parallel}}{\eta_{i,\parallel}}\frac{1+\eta_{i,\parallel}}{1-\eta_{i,\parallel}}\delta^{-1}-\frac{1}{\eta_{i,\parallel}}\delta^{-1}
\]
\[>|v_{i,\parallel}|+\frac{1+\eta_{i,\parallel}}{\eta_{i,\parallel}}\delta^{-1}-\frac{1}{\eta_{i,\parallel}}\delta^{-1}>|v_{i,\parallel}|+\delta^{-1},\]
where we use $|v_{i,\parallel}|>\frac{1+\eta}{1-\eta}\delta^{-1}$ in the second line and $1>\eta\geq \eta_{i,\parallel}$ in the third line. Then we obtain~\eqref{eqn: (a)}.

Under the condition~\eqref{eqn: (c) condition}, we apply the same computation above to obtain~\eqref{eqn: (c)}.

\end{proof}

\begin{lemma}\label{Lemma: accumulate}
Suppose there are $n$ number of $v_j$ such that
\begin{equation}\label{eqn: satisfy condition}
|v_{j,\parallel}-\eta_{j,\parallel}v_{j-1,\parallel}|\geq \delta^{-1},
\end{equation}
and also suppose the index $j$ in these $v_j$ are $i_1<i_2<\cdots<i_n$, then
\begin{equation}\label{eqn: claim M}
\int_{\prod_{j={i_1}}^{k-1} \mathcal{V}_j}\mathbf{1}_{\{t_k>0\}}\mathbf{1}_{\{\text{~\eqref{eqn: satisfy condition} holds for $j=i_1,i_2,\cdots, i_n$}\}}d\Phi_{i_1,m}^{k,k-1}(t_k) \leq  (\delta)^{n}   (C_{T_M,\xi})^{2(k-i_1)}\mathcal{A}_{k-1,i_1}.
\end{equation}

\end{lemma}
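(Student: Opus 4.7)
The plan is to proceed by induction on $n$, peeling off one constrained index at a time via the measure factorization~\eqref{eqn: ppt for Phi}. The base case $n=1$ is exactly the estimate~\eqref{eqn: case b} of Lemma~\ref{Lemma: (2)}.

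For the inductive step, assume~\eqref{eqn: claim M} holds for $n-1$. Given indices $i_1<i_2<\cdots<i_n$, I split the measure as $d\Phi_{i_1,m}^{k,k-1}(t_k)=d\Phi_{i_2,m}^{k,k-1}(t_k)\,d\Upsilon_{i_1}^{i_2-1}$ via~\eqref{eqn: ppt for Phi} and perform the inner integration over $\prod_{j=i_2}^{k-1}\mathcal{V}_j$ first. The induction hypothesis applied to the remaining $n-1$ constraints at $i_2,\ldots,i_n$ gives
\[
\int_{\prod_{j=i_2}^{k-1}\mathcal{V}_j}\mathbf{1}_{\{t_k>0\}}\mathbf{1}_{\{\text{\eqref{eqn: satisfy condition} at }i_2,\ldots,i_n\}}\,d\Phi_{i_2,m}^{k,k-1}(t_k)\leq\delta^{n-1}(C_{T_M,\xi})^{2(k-i_2)}\mathcal{A}_{k-1,i_2},
\]
and, by~\eqref{eqn: Elp}, the right-hand side depends on the unintegrated variables only through the Gaussian weight $\mathcal{A}_{k-1,i_2}$, which is of the form $\exp(C|v_{i_2-1}|^2)$.

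It then remains to bound
\[
\int_{\prod_{j=i_1}^{i_2-1}\mathcal{V}_j}\mathbf{1}_{\{t_k>0\}}\mathbf{1}_{\{\text{\eqref{eqn: satisfy condition} at }i_1\}}\mathcal{A}_{k-1,i_2}\,d\Upsilon_{i_1}^{i_2-1}
\]
by $\delta(C_{T_M,\xi})^{2(i_2-i_1)}\mathcal{A}_{k-1,i_1}$. I carry out the integrations over $v_{i_2-1},v_{i_2-2},\ldots,v_{i_1+1}$ in the same downward-induction fashion as in the proof of Lemma~\ref{lemma: boundedness}: at each free level $p$, the factor $e^{[\frac{1}{2T_w(x_p)}-\frac{1}{2T_w(x_{p+1})}]|v_p|^2}d\sigma(v_p,v_{p-1})$ from $d\Upsilon$ combined with the Gaussian weight $\mathcal{A}_{k-1,p+1}$ collapses, via the temperature recursion~\eqref{eqn: definition of T_p} and the calculations~\eqref{eqn: Vq para}--\eqref{eqn: Vq perp}, to $(C_{T_M,\xi})^2\mathcal{A}_{k-1,p}$. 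At the distinguished level $p=i_1$ the constraint $|v_{i_1,\parallel}-\eta_{i_1,\parallel}v_{i_1-1,\parallel}|\geq\delta^{-1}$ is in force, so I invoke~\eqref{eqn: case b} of Lemma~\ref{Lemma: (2)} in place of the standard single-velocity bound; this substitution supplies the extra factor $\delta$ while preserving the $(C_{T_M,\xi})^2\mathcal{A}_{k-1,i_1}$ structure. Multiplying the accumulated constants yields the bound $\delta^n(C_{T_M,\xi})^{2(k-i_1)}\mathcal{A}_{k-1,i_1}$, closing the induction.

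The main work is really just careful bookkeeping of the $\mathcal{A}$-weights through the factorization; no new analytic input is needed, since the free-index chaining is precisely the one already performed in Lemma~\ref{lemma: boundedness} and the constrained step is precisely Lemma~\ref{Lemma: (2)}. The smallness restriction $t\leq t^*$ inherited from Lemma~\ref{lemma: boundedness} continues to suffice because no extra $\mathcal{C}^{l-q}$ growth beyond those already absorbed there is introduced: the constraint at $i_1$ only multiplies the outcome of a single velocity integration by $\delta$, without changing its exponent.
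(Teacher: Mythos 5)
Your proof is correct and is essentially the paper's argument in different clothing: the paper runs the peeling as an explicit downward iteration, applying~\eqref{eqn: structure} together with the constrained one-level estimate underlying~\eqref{eqn: case b} to strip off $i_n$ first, then $i_{n-1}$, and so on down to $i_1$, whereas you package the constraints at $i_2,\ldots,i_n$ into an induction hypothesis and treat $i_1$ last; unrolling your induction reproduces the paper's iteration step for step. Both proofs rest on exactly the same ingredients — the second inequality of~\eqref{eqn: structure} for free-index chaining and the $\delta$-gain in the constrained single-velocity integral from Lemma~\ref{Lemma: (2)} — so no new analytic content is involved.
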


\begin{proof}
By~\eqref{eqn: structure} in Lemma 2 with $l=k-1$, $p=i_1$, $p'=i_n$ and using~\eqref{eqn: case b} with $i=i_n$, we have
\[\int_{\prod_{j=i_1}^{k-1} \mathcal{V}_j}\mathbf{1}_{\{t_k>0\}}\mathbf{1}_{\{\text{~\eqref{eqn: satisfy condition} holds for $j=i_1,\cdots,i_n$}\}}d\Phi_{i_1,m}^{k,k-1}(t_k)\]
\[ \leq \delta (C_{T_M,\xi})^{2(k-i_n)} \mathcal{A}_{k-1,i_n}\int_{\prod_{j=i_1}^{i_n-1} \mathcal{V}_j} \mathbf{1}_{\{t_k>0\}}\mathbf{1}_{\{\text{~\eqref{eqn: satisfy condition} holds for $j=i_1,\cdots,i_{n-1}$}\}}        d\Upsilon_{i_1}^{i_n-1}\]
\begin{equation}\label{eqn: split iM}
=\delta (C_{T_M,\xi})^{2(k-i_n)} \mathcal{A}_{k-1,i_n}\int_{\prod_{j=i_1}^{i_{n-1}-1}\mathcal{V}_j}\int_{\prod_{j=i_{n-1}}^{(i_n)-1} \mathcal{V}_j} \mathbf{1}_{\{t_k>0\}}\mathbf{1}_{\{\text{~\eqref{eqn: satisfy condition} holds for $j=i_1,\cdots,i_{n-1}$}\}}      d\Upsilon_{i_{n-1}}^{(i_{n})-1}d\Upsilon_{i_1}^{i_{n-1}-1}.
\end{equation}
Again by~\eqref{eqn: structure} and~\eqref{eqn: case b} with $i=i_{n-1}$ we have
\[\eqref{eqn: split iM}\leq \delta^2 (C_{T_M,\xi})^{2(k-i_{n-1})}\mathcal{A}_{k-1,i_{n-1}}\int_{\prod_{j=i_1}^{i_{n-1}-1} \mathcal{V}_j} \mathbf{1}_{\{t_k>0\}}\mathbf{1}_{\{\text{~\eqref{eqn: satisfy condition} holds for $j=i_1,\cdots,i_{n-2}$}\}}        d\Upsilon_{i_1}^{i_{n-1}-1}.\]
Keep doing this computation until integrating over $\mathcal{V}_{i_1}$ we derive~\eqref{eqn: claim M}.

\end{proof}

\begin{lemma}\label{Lemma: Step3}
For $0<\delta\ll 1$, we define
\begin{equation}\label{eqn: decom}
  \mathcal{V}_{j}^{\delta}:=\{v_j\in \mathcal{V}_j:|v_j\cdot n(x_j)|>\delta,|v_j|\leq \delta^{-1}\}.
\end{equation}
For the sequence $\{v_1,v_2,\cdots,v_{k-1}\}$, consider a subsequence  $\{v_{l+1},v_{l+2},\cdots,v_{l+L}\}$ with $l+1<l+L\leq k-1$ as follows:
\begin{equation}\label{eqn: sequence}
  \underbrace{v_{l}}_{\in \mathcal{V}_l^{\frac{1-\eta}{2(1+\eta)}\delta}},\underbrace{v_{l+1},v_{l+2}\cdots v_{l+L}}_{\text{all}\in \mathcal{V}_{l+j}\backslash \mathcal{V}_{l+j}^{\frac{1-\eta}{2(1+\eta)}\delta}},\quad \quad\underbrace{v_{l+L+1}}_{\in \mathcal{V}_{l+L+1}^{\frac{1-\eta}{2(1+\eta)}\delta}}.
\end{equation}
In~\eqref{eqn: sequence}, if $L\geq 100\frac{1+\eta}{1-\eta}$, then we have
\begin{equation}\label{eqn: Step3}
\int_{\prod_{j={l}}^{k-1} \mathcal{V}_j}\mathbf{1}_{\{t_k>0\}}\mathbf{1}_{\{v_{l+j}\in \mathcal{V}_{l+j}\backslash \mathcal{V}_{l+j}^{\frac{1-\eta}{2(1+\eta)}\delta} \text{ for } 1\leq j\leq L\}}d\Phi_{l,m}^{k,k-1}(t_k) \leq  (3\delta)^{L/2}   (C_{T_M,\xi})^{2(k-l)}\mathcal{A}_{k-1,l}.
\end{equation}
Here the $\eta$ satisfies the condition~\eqref{eqn: eta condition}.

\end{lemma}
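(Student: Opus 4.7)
The plan is to partition each bad velocity $v_{l+j}$ for $1\le j\le L$ into finitely many sub-cases, extract an explicit factor $\delta$ from a large proportion of the indices using Lemma~\ref{Lemma: (2)}, and show that the remaining ``growth'' indices are bounded in number by the a priori estimate $|v_l|\le \frac{2(1+\eta)}{1-\eta}\delta^{-1}$ coming from $v_l\in\mathcal{V}_l^{\frac{1-\eta}{2(1+\eta)}\delta}$.

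First I decompose. At each bad index $l+j$, the failure $v_{l+j}\notin \mathcal{V}_{l+j}^{\frac{1-\eta}{2(1+\eta)}\delta}$ puts $v_{l+j}$ in one of three alternatives: (A) $|v_{l+j}\cdot n(x_{l+j})|\le \frac{1-\eta}{2(1+\eta)}\delta$, or (B) $|v_{l+j,\parallel}|>\frac{1+\eta}{1-\eta}\delta^{-1}$, or (C) $|v_{l+j,\perp}|>\frac{1+\eta}{1-\eta}\delta^{-1}$; the pair (B)--(C) exhausts the regime $|v_{l+j}|>\frac{2(1+\eta)}{1-\eta}\delta^{-1}$. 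Case (A) already yields a factor $\delta$ by~\eqref{eqn: 2}. Case (B) splits further according to whether $|v_{l+j,\parallel}-\eta_{l+j,\parallel}v_{l+j-1,\parallel}|>\delta^{-1}$, which produces $\delta$ via~\eqref{eqn: case b}, or its complement, in which case Lemma~\ref{Lemma:  (a)(c)} forces the parallel growth $|v_{l+j-1,\parallel}|>|v_{l+j,\parallel}|+\delta^{-1}$. An analogous dichotomy in case (C) yields either a factor $\delta$ via~\eqref{eqn: case d} or the perpendicular growth $|v_{l+j-1,\perp}|>|v_{l+j,\perp}|+\delta^{-1}$.

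Next I bound the number of growth events. Let $n_\parallel$, resp.\ $n_\perp$, denote the number of indices at which parallel, resp.\ perpendicular, growth is triggered. Each parallel growth at index $l+j$ gives $|v_{l+j-1,\parallel}|>|v_{l+j,\parallel}|+\delta^{-1}\ge \frac{2}{1-\eta}\delta^{-1}$, which exceeds the threshold $\frac{1+\eta}{1-\eta}\delta^{-1}$ required at the next triggered parallel growth, so the inequalities chain through consecutive parallel growths; intermediate $\delta$-gaining or perpendicular-growth indices neither shrink the accumulated parallel bound (the former are absorbed into the $\delta^{L/2}$ factor, the latter does not act on the parallel component). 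Telescoping yields $|v_{l,\parallel}|\gtrsim n_\parallel \delta^{-1}$, and combining with $|v_l|\le \frac{2(1+\eta)}{1-\eta}\delta^{-1}$ gives $n_\parallel\le \frac{2(1+\eta)}{1-\eta}$. An identical argument in the normal direction gives $n_\perp\le \frac{2(1+\eta)}{1-\eta}$, whence $n_\parallel+n_\perp\le \frac{4(1+\eta)}{1-\eta}\le L/2$ under the hypothesis $L\ge 100\frac{1+\eta}{1-\eta}$.

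Finally, for each choice of partition of $\{l+1,\dots,l+L\}$ into the (at most) five scenarios (A), (B-drift), (B-growth), (C-drift), (C-growth), the structural factorization~\eqref{eqn: structure} from Lemma~\ref{lemma: boundedness} splits the multiple integral at each $\delta$-gaining index, and repeated Lemma~\ref{Lemma: accumulate}-type chaining collects one factor $\delta$ per such index. Since Step~2 guarantees at least $L-(n_\parallel+n_\perp)\ge L/2$ of them, each configuration contributes at most $\delta^{L/2}(C_{T_M,\xi})^{2(k-l)}\mathcal{A}_{k-1,l}$, and summing over the $O(3^{L/2})$ relevant configurations yields the claimed $(3\delta)^{L/2}(C_{T_M,\xi})^{2(k-l)}\mathcal{A}_{k-1,l}$. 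The main obstacle is the rigorous telescoping in Step~2: the growth conclusions of Lemma~\ref{Lemma:  (a)(c)} are local constraints between adjacent integration variables, so one must carefully track how the large-parallel (resp.\ large-perpendicular) component is preserved through intermediate perpendicular-growth (resp.\ parallel-growth) and $\delta$-gaining indices, relying on the quantitative gap $\tfrac{2}{1-\eta}>\tfrac{1+\eta}{1-\eta}$ to ensure the large-velocity hypothesis is automatically regenerated at each triggered growth event.
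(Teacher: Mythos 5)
Your decomposition into cases (A), (B-drift), (B-growth), (C-drift), (C-growth) matches the paper's set-up (Lemma~\ref{Lemma: (2)} and Lemma~\ref{Lemma:  (a)(c)}), but the crucial "Step 2" does not work. You assert that telescoping the parallel growth events yields $|v_{l,\parallel}|\gtrsim n_\parallel\delta^{-1}$, and therefore that the total number of growth events is at most $\frac{4(1+\eta)}{1-\eta}$. This is false. A parallel growth at index $j$ gives $|v_{j-1,\parallel}|>|v_{j,\parallel}|+\delta^{-1}$, but the intermediate $\delta$-gaining events do \emph{not} preserve this accumulated size. In particular, the condition $|v_{j,\parallel}-\eta_{j,\parallel}v_{j-1,\parallel}|>\delta^{-1}$ defining the parallel drift case places no lower bound on $|v_{j-1,\parallel}|$ (it is perfectly consistent with $v_{j-1,\parallel}=0$), and cases (A) and (C) constrain $|v_{j-1}\cdot n|$ or $v_{j-1,\perp}$ only. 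So between two growth events the parallel component can collapse to anything, the chain is broken, and $n_\parallel$ cannot be bounded independently of $L$. Your parenthetical claim that such indices ``neither shrink the accumulated parallel bound'' is exactly the assertion that fails.

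The paper's proof addresses this by never trying to bound the \emph{number} of growth events. Instead it splits into: either at least $L/2$ of the indices are in the $\delta$-gaining set $\mathcal{W}_{j,\delta}$ (then Lemma~\ref{Lemma: accumulate} gives $(3\delta)^{L/2}$ directly), or at least $L/4$ are parallel-growth (Set1) or at least $L/4$ are perpendicular-growth. In the latter regime, the telescoping identity $|v_{l+L+1,\parallel}|-|v_{l,\parallel}|=\sum_{j=1}^{L+1}\big(|v_{l+j,\parallel}|-|v_{l+j-1,\parallel}|\big)$, together with the boundary bound $|v_l|,|v_{l+L+1}|\le\frac{2(1+\eta)}{1-\eta}\delta^{-1}$, forces the \emph{compensating increase events} (Set2, with $a_j=|v_{q_j,\parallel}|-|v_{q_j-1,\parallel}|>0$) to satisfy $\sum a_j>M\delta^{-1}/2$ when $M=|\text{Set1}|\ge L/4$. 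Each Set2 index contributes a factor $e^{-a_j^2/4T_M}$ via~\eqref{eqn: coe abc smaller}, and Cauchy--Schwarz on $\sum a_j^2\ge(\sum a_j)^2/\mathcal{M}$ with $\mathcal{M}\le 3L/4$ yields $e^{-L\delta^{-1}}\le(3\delta)^{L/2}$. It is precisely these compensating shrinks — which your argument declares harmless — that carry the smallness in the growth-dominated case. To repair the proof you would need to incorporate this Set2/Cauchy--Schwarz mechanism (or an equivalent way of converting many growth events into a large total of quantified decreases).
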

\begin{remark}
In this lemma we combine the estimates and properties in Lemma \ref{Lemma: (2)} and Lemma \ref{Lemma:  (a)(c)}. In the proof we will address the difficulty stated in Lemma \ref{Lemma:  (a)(c)} to obtain the key factor $(3\delta)^{L/2}$.

\end{remark}

\begin{proof}
By the definition~\eqref{eqn: decom} we have
\[\mathcal{V}_{i}\backslash \mathcal{V}_{i}^{\frac{1-\eta}{2(1+\eta)}\delta}  =\{v_i\in \mathcal{V}_i:|v_i\cdot n(x_i)|<\frac{1-\eta}{2(1+\eta)}\delta \text{ or }|v_i|\geq \frac{2(1+\eta)}{1-\eta}\delta^{-1}\}.\]
Here we summarize the result of Lemma \ref{Lemma: (2)} and Lemma \ref{Lemma:  (a)(c)}.
With $\frac{1-\eta}{1+\eta}\delta<\delta$, when $v_i\in \mathcal{V}_i\backslash \mathcal{V}_i^{\frac{1-\eta}{2(1+\eta)}\delta}$
\begin{enumerate}

  \item When $|v_i\cdot n(x_i)|<\frac{1-\eta}{2(1+\eta)}\delta$, then we have~\eqref{eqn: 2}.

  \item When $|v_{i}|>\frac{2(1+\eta)}{1-\eta}\delta^{-1}$,
   \begin{enumerate}
     \item when $|v_{i,\parallel}|>\frac{1+\eta}{1-\eta}\delta^{-1}$, if $|v_{i,\parallel}-\eta_{i,\parallel}v_{i-1,\parallel}|<\delta^{-1}$, then $|v_{i-1,\parallel}|>|v_{i,\parallel}|+\delta^{-1}$. \\
     \item when $|v_{i,\parallel}|>\frac{1+\eta}{1-\eta}\delta^{-1}$, if $|v_{i,\parallel}-\eta_{i,\parallel}v_{i-1,\parallel}|\geq \delta^{-1}$, then we have~\eqref{eqn: case b}. \\
     \item when $|v_{i,\perp}|>\frac{1+\eta}{1-\eta}\delta^{-1}$, if $|v_{i,\perp}-\eta_{i,\perp}v_{i-1,\perp}|<\delta^{-1}$, then $|v_{i-1,\perp}|>|v_{i,\perp}|+\delta^{-1}$ .\\
     \item when $|v_{i,\perp}|>\frac{1+\eta}{1-\eta}\delta^{-1}$, if $|v_{i,\perp}-\eta_{i,\perp}v_{i-1,\perp}|\geq \delta^{-1}$, then we have~\eqref{eqn: case d}.\\
   \end{enumerate}

\end{enumerate}

We define $\mathcal{W}_{i,\delta}$ as the space that provides the smallness:
\[\mathcal{W}_{i,\delta}:=\{v_i\in \mathcal{V}_i:|v_{i,\perp}|<\frac{1-\eta}{2(1+\eta)}\delta\}\bigcup \{v_i\in \mathcal{V}_i:|v_{i,\perp}|>\frac{1+\eta}{1-\eta}\delta^{-1}\text{ and }|v_{i,\perp}-\eta_{i,\perp}v_{i-1,\perp}|>\delta^{-1}\}\]
\[\bigcup \{v_i\in \mathcal{V}_i:|v_{i,\parallel}|>\frac{1+\eta}{1-\eta}\delta^{-1}\text{ and }|v_{i,\parallel}-\eta_{i,\parallel}v_{i-1,\parallel}|>\delta^{-1}\}.\]
Then we have
\begin{equation}\label{eqn: subset}
  \begin{split}
     & \mathcal{V}_{i}\backslash \mathcal{V}_{i}^{\frac{1-\eta}{2(1+\eta)}\delta} \subset \mathcal{W}_{i,\delta} \bigcup \{v_{i,\perp}\in \mathcal{V}_{i,\perp}|v_{i,\perp}|>\frac{1+\eta}{1-\eta}\delta^{-1}~\text{and}~|v_{i,\perp}-\eta_{i,\perp}v_{i-1,\perp}|<\delta^{-1}\}
 \\
      & \bigcup \{v_{i,\parallel}\in \mathcal{V}_{i,\parallel}|v_{i,\parallel}|>\frac{1+\eta}{1-\eta}\delta^{-1}~\text{and}~|v_{i,\parallel}-\eta_{i,\parallel}v_{i-1,\parallel}|<\delta^{-1}\}.
  \end{split}
\end{equation}
By~\eqref{eqn: 2},~\eqref{eqn: case b} and~\eqref{eqn: case d} with $\frac{1-\eta}{1+\eta}\delta<\delta$, we obtain
\begin{equation}\label{eqn: V_i,delta}
  \int_{\prod_{j=i}^{k-1}\mathcal{V}_j}  \mathbf{1}_{\{v_i\in \mathcal{W}_{i,\delta}\}}   \mathbf{1}_{\{t_k>0\}}  d\Phi_{i,m}^{k,k-1}(t_k) \leq 3\delta    (C_{T_M,\xi})^{2(k-i)}\mathcal{A}_{k-1,i}.
\end{equation}

For the subsequence $\{v_{l+1},\cdots,v_{l+L}\}$ in~\eqref{eqn: sequence}, when the number of $v_j\in \mathcal{W}_{j,\delta}$ is larger than $L/2$, by~\eqref{eqn: claim M} in Lemma \ref{Lemma: accumulate} with $n=L/2$ and replacing the condition~\eqref{eqn: satisfy condition} by $v_j\in \mathcal{W}_{j,\delta}$, we obtain
\[\int_{\prod_{j=l}^{k-1} \mathcal{V}_j}   \mathbf{1}_{\{\text{Number of }v_j\in \mathcal{W}_{j,\delta} \text{ is large than }L/2\}}    \mathbf{1}_{\{t_k>0\}} d\Phi_{l,m}^{k,k-1}(t_k)\]
\begin{equation}\label{eqn: 3delta}
  \leq (3\delta)^{L/2}    (C_{T_M,\xi})^{2(k-l_i)}\mathcal{A}_{k-1,l}.
\end{equation}
We finish the discussion with the case(1),(2b),(2d). Then we focus on the case (2a),(2c).

When the number of $v_j \notin \mathcal{W}_{j,\delta}$ is larger than $L/2$, by~\eqref{eqn: subset} we further consider two cases. The first case is that the number of $v_j\in \{v_j:|v_{j,\parallel}|>\frac{1+\eta}{1-\eta}\delta^{-1}~\text{and}~|v_{j,\parallel}-\eta_{j,\parallel}v_{j-1,\parallel}|<\delta^{-1}\}$ is larger than $L/4$. According to the relation of $v_{j,\parallel}$ and $v_{j-1,\parallel}$, we categorize them into
\begin{description}
  \item[Set1] $\{v_j\notin \mathcal{W}_{j,\delta}:|v_{j,\parallel}|>\frac{1+\eta}{1-\eta}\delta^{-1}~\text{and}~|v_{j,\parallel}-\eta_{j,\parallel}v_{j-1,\parallel}|<\delta^{-1}\}$.
\end{description}
Denote $M=|\text{Set1}|$ and the corresponding index in Set1 as $j=p_1,p_2,\cdots,p_{M}$. Then we have
\begin{equation}\label{eqn: Mi'}
  L/4\leq M\leq L.
\end{equation}
By~\eqref{eqn: (a)} in Lemma \ref{Lemma:  (a)(c)}, for those $v_{p_j}$, we have
\begin{equation}\label{eqn: increase large}
|v_{p_j,\parallel}|-|v_{p_j-1,\parallel}|<-\delta^{-1}.
\end{equation}

\begin{description}
  \item[Set2]$\{v_j\in \mathcal{V}_j\backslash \mathcal{V}_j^{\frac{1-\eta}{2(1+\eta)\delta}}:|v_{j,\parallel}|\leq |v_{j-1,\parallel}|\leq |v_{j,\parallel}|+\delta^{-1}\}$.
\end{description}

Denote $\mathcal{M}=|\text{Set2}|$ and the corresponding index in Set2 as $j=q_1,q_2,\cdots,q_{\mathcal{M}}$. By~\eqref{eqn: Mi'} we have
\begin{equation}\label{eqn: mathcal M}
1\leq \mathcal{M}\leq L-M\leq \frac{3}{4}L.
\end{equation}
Then for those $v_{q_j}$ we define
\begin{equation}\label{eqn: ai def}
a_j:=|v_{q_j,\parallel}|-|v_{q_j-1,\parallel}|>0.
\end{equation}

 \begin{description}

  \item[Set3] $\{v_j\in \mathcal{V}_j\backslash \mathcal{V}_j^{\frac{1-\eta}{2(1+\eta)\delta}}:|v_{j,\parallel}|\leq |v_{j-1,\parallel}|\leq |v_{j,\parallel}|+\delta^{-1}\}$.
\end{description}

Denote $N=|\text{Set3}|$ and the corresponding index in Set3 as $j=o_1,o_2,\cdots,o_N$. Then for those $o_j$, we have
\begin{equation}\label{eqn: increase small}
|v_{o_j,\parallel}|\leq |v_{o_j-1,\parallel}|\leq |v_{o_j,\parallel}|+\delta^{-1}.
\end{equation}

From~\eqref{eqn: sequence}, we have $v_{l}\in \mathcal{V}_{l}^{\frac{1-\eta}{2(1+\eta)}\delta}$ and $v_{l+L+1}\in \mathcal{V}_{l+L+1}^{\frac{1-\eta}{2(1+\eta)}\delta}$, thus we can obtain
\begin{equation}\label{eqn: xiangjian}
 -\frac{2(1+\eta)}{1-\eta}\delta^{-1}<|v_{l+L+1,\parallel}|-|v_{l,\parallel}|= \sum_{j=1}^{L+1} |v_{l+j,\parallel}|-|v_{l+j-1,\parallel}|.
\end{equation}
By~\eqref{eqn: increase large},~\eqref{eqn: ai def} and~\eqref{eqn: increase small}, we derive that
\[\frac{-2(1+\eta)}{1-\eta}\delta^{-1}<\sum_{j=1}^{M} \big(|v_{p_j,\parallel}|-|v_{p_j-1,\parallel}|\big)+\sum_{j=1}^{\mathcal{M}} \big(|v_{q_j,\parallel}|-|v_{q_j-1,\parallel}|\big)+\sum_{j=1}^{N} \big(|v_{o_j,\parallel}|-|v_{o_j-1,\parallel}|\big) \]
\[\leq -M \delta^{-1}+\sum_{j=1}^{\mathcal{M}}a_j.\]
Therefore, by $L\geq 100\frac{1+\eta}{1-\eta}$ and~\eqref{eqn: Mi'}, we obtain
\[\frac{2(1+\eta)}{1-\eta}\delta^{-1}\leq \frac{L}{10}\delta^{-1}\leq \frac{M}{2}\delta^{-1}\]
and thus
\begin{equation}\label{eqn: ai sum}
 \sum_{j=1}^{\mathcal{M}}a_j\geq M\delta^{-1}-\frac{2(1+\eta)}{1-\eta}\delta^{-1}>\frac{M\delta^{-1}}{2}.
\end{equation}
We focus on integrating over $\mathcal{V}_{q_i}$, those index satisfy~\eqref{eqn: ai def}. Let $1\leq i\leq \mathcal{M}$, we consider the third line of~\eqref{eqn: int V_i} with $i=q_i$ and with integrating over $\{v_{q_i,\parallel}\in \mathcal{V}_{q_i,\parallel}:|v_{q_i,\parallel}|-|v_{q_i-1,\parallel}|= a_i\}$. To apply~\eqref{eqn: coe abc smaller} in Lemma \ref{Lemma: abc}, we set
\[a=-\frac{1}{2T_{k-1,q_i}}+\frac{1}{2T_w(x_{q_i})},\quad b=\frac{1}{2T_w(x_{q_i})r_\parallel(2-r_\parallel)}, \quad \e=\mathcal{C}^{k-q_i}t.\]
By the same computation as~\eqref{eqn: B i para}, we have
\begin{equation}\label{eqn: k_1}
  a+\e-b=-\frac{1}{2T_{k-1,q_i}}+\frac{1}{2T_w(x_{q_i})}-\frac{1}{2T_w(x_{q_i})r_\parallel(2-r_\parallel)}+\mathcal{C}^{k-q_i}t <-\frac{1}{4T_M}.
\end{equation}
Then we use $\eta_{q_i,\parallel}<1$ to obtain
\begin{equation}\label{eqn: bound a_i}
 \mathbf{1}_{\{|v_{q_i,\parallel}|-|v_{q_i-1,\parallel}|= a_i\}}\leq  \mathbf{1}_{\{|v_{q_i,\parallel}|-\eta_{q_i,\parallel}|v_{q_i-1,\parallel}|>a_i\}}  \leq  \mathbf{1}_{\{|v_{q_i,\parallel}-\eta_{q_i,\parallel}v_{q_i-1,\parallel}|>a_i\}}.
\end{equation}
By~\eqref{eqn: coe abc smaller} in Lemma \ref{Lemma: abc} and~\eqref{eqn: bound a_i}, we apply~\eqref{eqn: Vq para} with $q=q_i$ to bound the third line of~\eqref{eqn: int V_i}( the integration over $\mathcal{V}_{q_i,\parallel}$ ) by
\begin{equation}\label{eqn: 4aiTM}
e^{-\frac{a_i^2}{4T_M}} C_{T_M,\xi}\exp\left(\big[ \frac{[T_{k-1,q_i}-T_w(x_{q_i})][1-r_{min}]}{2T_w(x_{q_i})[T_{k-1,q_i}(1-r_{min})+r_{min} T_w(x_{q_i})]} + \mathcal{C}^{k-q_i}t\big]|v_{q_i-1,\parallel}|^2\right).
\end{equation}
Hence by the constant in~\eqref{eqn: 4aiTM} we draw a similar conclusion as~\eqref{eqn: V_i,delta}:
\begin{equation}\label{eqn: case a_i}
   \int_{\prod_{j=q_i}^{k-1} \mathcal{V}_j}\mathbf{1}_{\{t_k>0\}}\mathbf{1}_{\{|v_{q_i,\parallel}|-|v_{q_i-1,\parallel}|= a_i\}}d\Phi_{q_i,m}^{k,k-1}(t_k)
\leq e^{-\frac{a_i^2}{4T_M}}  (C_{T_M,\xi})^{2(k-q_i)}\mathcal{A}_{k-1,q_i}.
\end{equation}
Therefore, by Lemma \ref{Lemma: accumulate}, after integrating over $\mathcal{V}_{q_1,\parallel},\mathcal{V}_{q_2,\parallel},\cdots,\mathcal{V}_{q_\mathcal{M},\parallel}$ we obtain an extra constant
\[e^{-[a_i^2+a_2^2+\cdots +a_{\mathcal{M}}^2]/4T_M}\leq e^{-[a_i+a_2+\cdots +a_{\mathcal{M}}]^2/(4T_M\mathcal{M})}\leq e^{-[M\delta^{-1}/2]^2/(4T_M\mathcal{M})}\]
\[\leq e^{-[\frac{L}{8}\delta^{-1}]^2/(4T_M\frac{3}{4}L)}\leq e^{-\frac{1}{96T_M}L(\delta^{-1})^2}\leq e^{-L\delta^{-1}}.\]
Here we use~\eqref{eqn: ai sum} in the last step of first line and use~\eqref{eqn: Mi'},~\eqref{eqn: mathcal M} in the first step of second line and take $\delta\ll 1$ in the last step of second line. Then $e^{-L\delta^{-1}}$ is smaller than $(3\delta)^{L/2}$ in~\eqref{eqn: 3delta} and we conclude
\begin{equation}\label{eqn: 3delta 1}
  \int_{\prod_{j=l}^{k-1} \mathcal{V}_j}   \mathbf{1}_{\{ M=|\text{Set1}|\geq L/4\}}    \mathbf{1}_{\{t_k>0\}} d\Phi_{l,m}^{k,k-1}(t_k)\leq (3\delta)^{L/2}    (C_{T_M,\xi})^{2(k-l_i)}\mathcal{A}_{k-1,l}.
\end{equation}

The second case is that the number of $v_j\in \{v_j\notin \mathcal{W}_{j,\delta}:|v_{j,\perp}|>\frac{1+\eta}{1-\eta}\delta^{-1}\}$ is larger than $L/4$. We categorize $v_{j,\perp}$ into

\begin{description}
  \item[Set4] $\{v_j\notin \mathcal{W}_{j,\delta}:|v_{j,\perp}|>\frac{1+\eta}{1-\eta}\delta^{-1}~\text{and}~|v_{j,\perp}-\eta_{j,\perp}v_{j-1,\perp}|<\delta^{-1}\}$.
\end{description}

\begin{description}
  \item[Set5]$\{v_j\in \mathcal{V}_j\backslash \mathcal{V}_j^{\frac{1-\eta}{2(1+\eta)\delta}}:|v_{j,\perp}|>|v_{j-1,\perp}|\}$.
\end{description}

 \begin{description}

  \item[Set6] $\{v_j\in \mathcal{V}_j\backslash \mathcal{V}_j^{\frac{1-\eta}{2(1+\eta)\delta}}:|v_{j,\perp}|\leq |v_{j-1,\perp}|\leq |v_{j,\perp}|+\delta^{-1}\}$.
\end{description}
Denote $|\text{Set4}|=M_1$ and the corresponding index as $p'_1,p'_2,\cdots,p'_{M_1}$, $|\text{Set5}|=\mathcal{M}_1$ and the corresponding index as $q'_1,q'_2,\cdots,q'_{\mathcal{M}_1}$, $|\text{Set6}|=N_1$ and the corresponding index as $o'_1,o'_2,\cdots,o'_{N_1}$. Also define $b_j:=|v_{q'_j,\perp}|-|v_{q'_j-1,\perp}|$. By the same computation as~\eqref{eqn: ai sum}, we have
\[ \sum_{j=1}^{\mathcal{M}_1}b_j\geq M_1\delta^{-1}-\frac{2(1+\eta)}{1-\eta}\delta^{-1}>\frac{M_1\delta^{-1}}{2}.\]
We focus on the integration over $v_{q'_j}$. Let $1\leq i\leq \mathcal{M}_1$, we consider the second line of~\eqref{eqn: int V_i} with $i=q'_i$ and with integrating over $\{v_{q'_i,\perp}\in \mathcal{V}_{q'_i,\perp}:|v_{q'_i,\perp}|-|v_{q'_i-1,\perp}|= b_i\}$. To apply~\eqref{eqn: coe perp smaller 2} in Lemma \ref{Lemma: abc}, we set
\[a=-\frac{1}{2T_{k-1,q_i'}}+\frac{1}{2T_w(x_{q_i'})},\quad b=\frac{1}{2T_w(x_{q_i'})r_\perp}, \quad \e=\mathcal{C}^{k-q_i'}t.\]
By the same computation as~\eqref{eqn: B i para}, we have
\begin{equation}\label{eqn: k_1}
  a+\e-b=-\frac{1}{2T_{k-1,q_i'}}+\frac{1}{2T_w(x_{q_i'})}-\frac{1}{2T_w(x_{q_i'})r_\perp}+\mathcal{C}^{k-q_i'}t <-\frac{1}{4T_M}.
\end{equation}
Similar to~\eqref{eqn: bound a_i}, we have
\[ \mathbf{1}_{\{|v_{q'_i,\perp}|-|v_{q'_i-1,\perp}|= b_i\}}\leq \mathbf{1}_{\{|v_{q'_i,\perp}-\eta_{q'_i,\perp}v_{q'_i-1,\perp}|>b_i\}}.\]
Hence by~\eqref{eqn: coe perp smaller 2} in Lemma \ref{Lemma: integrate normal small} and applying~\eqref{eqn: Vq perp}, we bound the integration over $\mathcal{V}_{q'_i,\perp}$ by
\[e^{-\frac{b_i^2}{16T_M}} C_{T_M,\xi}\exp\left(\big[ \frac{[T_{k-1,q_i'}-T_w(x_{q_i'})][1-r_{min}]}{2T_w(x_{q_i'})[T_{k-1,q_i'}(1-r_{min})+r_{min} T_w(x_{q_i'})]} + \mathcal{C}^{k-q_i'}t\big]|v_{q_i'-1,\perp}|^2\right).\]
Therefore,
\[\int_{\prod_{j=q_i'}^{k-1} \mathcal{V}_j}\mathbf{1}_{\{t_k>0\}}\mathbf{1}_{\{|v_{q_i',\perp}|-|v_{q_i'-1,\perp}|= b_i\}}d\Phi_{q_i',m}^{k,k-1}(t_k)\leq e^{-\frac{b_i^2}{16T_M}}  (C_{T_M,\xi})^{2(k-q_i')} \mathcal{A}_{k-1,q_i'}.\]
 The integration over $\mathcal{V}_{q'_1,\perp},\mathcal{V}_{q'_2,\perp},\cdots,\mathcal{V}_{q'_{\mathcal{M}_1},\perp}$ provides an extra constant
\[e^{-[b_1^2+b_2^2+\cdots +b_{\mathcal{M}_1}^2]/16T_M}\leq e^{-\frac{1}{400T_M}L (\delta^{-1})^2}\leq e^{-L\delta^{-1}},\]
where we set $\delta\ll 1$ in the last step. Then $e^{-L\delta^{-1}}$ is smaller than $(3\delta)^{L/2}$ in~\eqref{eqn: 3delta} and we conclude
\begin{equation}\label{eqn: 3delta 2}
\int_{\prod_{j=l}^{k-1} \mathcal{V}_j}   \mathbf{1}_{\{ M_1=|\text{Set4}|\geq L/4\}}    \mathbf{1}_{\{t_k>0\}} d\Phi_{l,m}^{k,k-1}(t_k) \leq (3\delta)^{L/2}    (C_{T_M,\xi})^{2(k-l)}\mathcal{A}_{k-1,l}.
\end{equation}

Finally collecting~\eqref{eqn: 3delta},~\eqref{eqn: 3delta 1} and~\eqref{eqn: 3delta 2} we derive the lemma.

\end{proof}

Now we prove the Lemma \ref{lemma: t^k}.

\begin{proof}[Proof of Lemma \ref{lemma: t^k}]

\textbf{Step 1}

To prove~\eqref{eqn: 1/2 decay} holds for the C-L boundary condition, we mainly use the decomposition~\eqref{eqn: decom} done by \cite{CKL} and~\cite{GKTT} for the diffuse boundary condition. In order to apply Lemma \ref{Lemma: Step3}, here we consider the space $\mathcal{V}_i^{\frac{1-\eta}{2(1+\eta)}\delta}$ and ensure $\eta$ satisfy the condition~\eqref{eqn: eta condition}. In this step we mainly focus on constructing the $\eta$, which is defined in~\eqref{eqn: eta}.

First we consider $\eta_{i,\parallel}$, which is defined in~\eqref{eqn: eta i para}. In regard to~\eqref{eqn: abe} and~\eqref{eqn: B i para def}, we take $t'=t'(\xi,k,T_M)$( consistent with~\eqref{eqn: t'} ) to be small enough and set $t\leq t'$ to obtain
\begin{equation}\label{eqn: B i para}
B_{i,\parallel}\geq \frac{1}{2T_{k-1,i}}-\mathcal{C}^{k-i}t\geq \frac{1}{2\frac{2\xi}{\xi+1}T_M}-\mathcal{C}^{k}t\geq \frac{1}{4T_M}.
\end{equation}
By~\eqref{eqn: formula of Tp}, $T_{k-1,i}\to T_M$ as $k-i\to \infty$. For any $\e_1>0$, there exists $k_1$ s.t when
\begin{equation}\label{eqn: e1}
k\geq k_1,\quad i\leq k/2, \text{ we have }T_{k-1,i}\leq (1+\e_1)T_M.
\end{equation}
Moreover, by~\eqref{eqn: assume T}, there exists $\e_2$ s.t
\begin{equation}\label{eqn: e2 def}
\frac{\min\{T_w(x)\}}{T_M}>\frac{1-r_\parallel}{2-r_\parallel}(1+\e_2).
\end{equation}
Then we have
\begin{equation}\label{eqn: e2 dependence}
\e_2=\e_2(\min\{T_w(x)\},T_M,r_\parallel,r_\perp).
\end{equation}

Thus we can bound $T_w(x_i)$ in the $\eta_{i,\parallel}$( defined in~\eqref{eqn: eta i para}) below as
\begin{equation}\label{eqn: bound below}
T_w(x_i)=T_{k-1,i}\frac{T_w(x_i)}{T_{k-1,i}}\geq T_{k-1,i}\frac{T_w(x_i)}{T_M}\frac{1}{1+\e_1}> \frac{1-r_\parallel}{2-r_\parallel}T_{k-1,i}\frac{1+\e_2}{1+\e_1}.
\end{equation}
Thus we obtain
\begin{equation}\label{eqn: eta i para bounded}
\eta_{i,\parallel}<\frac{1+\frac{\mathcal{C}^{k-i}t}{B_{i,\parallel}}}{(1-r_\parallel)^2+ \frac{1-r_\parallel}{2-r_\parallel}\frac{1+\e_2}{1+\e_1}r_\parallel(2-r_\parallel)}(1-r_\parallel)= \frac{1+\frac{\mathcal{C}^{k-i}t}{B_{i,\parallel}}}{1-r_\parallel+r_\parallel\frac{1+\e_2}{1+\e_1}}                                                                 .
\end{equation}
By~\eqref{eqn: e1}, we take
\begin{equation}\label{eqn: k_1 dependence}
k=k_1=k_1(\e_2,T_M,r_{\min})
\end{equation}
to be large enough such that $\e_1<\e_2/4$. By~\eqref{eqn: B i para} and~\eqref{eqn: eta i para bounded}, we derive that when $k=k_1$,
\begin{equation}\label{eqn: sup  less 1}
\sup_{i\leq k/2}\eta_{i,\parallel}\leq \frac{1+4T_M\mathcal{C}^{k}t}{1-r_\parallel+r_\parallel\frac{1+\e_2}{1+\e_2/4}}<\eta_\parallel<1.
\end{equation}
Here we define
\begin{equation}\label{eqn: eta_paral}
\eta_\parallel:=\frac{1}{1-r_\parallel+r_\parallel\frac{1+\e_2}{1+\e_2/2}}<1
\end{equation}
and we take $t'=t'(k,T_M,\e_2,\mathcal{C},r_\parallel)$ to be small enough and $t\leq t'$ such that $4T_M\mathcal{C}^{k}t\ll 1$ to ensure the second inequality in~\eqref{eqn: sup  less 1}.
Combining~\eqref{eqn: e2 dependence} and~\eqref{eqn: k_1 dependence}, we conclude the $t'$ we choose only depends on the parameter in~\eqref{eqn: t'}.

Then we consider $\eta_{i,\perp}$, which is defined in~\eqref{eqn: eta i perp}. In regard to~\eqref{eqn: abe perp} and~\eqref{eqn: B i perp}, by~\eqref{eqn: B i para} we have $B_{i,\perp}\geq \frac{1}{4T_M}.$ By $\frac{\min\{T_w(x)\}}{T_M}>\frac{\sqrt{1-r_\perp}-(1-r_\perp)}{r_\perp}$ in~\eqref{eqn: assume T} we can use the same computation as~\eqref{eqn: bound below} to obtain
\[T_w(x_i)>  \frac{\sqrt{1-r_\perp}-(1-r_\perp)}{r_\perp}  T_{k-1,i}\frac{1+\e_2}{1+\e_1},\]
with $\e_1<\e_2/4$. Thus we obtain
\[\eta_{i,\perp}<\eta_\perp <1           ,                                                     \]
where we define
\begin{equation}\label{eqn: eta perp}
\eta_{\perp}:=\frac{1}{\sqrt{1-r_\perp}+(1-\sqrt{1-r_\perp})\frac{1+\e_2}{1+\e_2/2}}<1,
\end{equation}
with $t'=t'(k,T_M,\e_2,\mathcal{C},r_\parallel)$( consistent with~\eqref{eqn: t'} ) small enough and $t\leq t'$.

Finally we define
\begin{equation}\label{eqn: eta}
  \eta:=\max\{\eta_\perp,\eta_\parallel\}<1.
\end{equation}

\textbf{Step 2}

\textbf{Claim}: We have
\begin{equation}\label{eqn:claim_delta}
  |t_{j}-t_{j+1}|\gtrsim_\Omega \Big( \frac{1-\eta}{2(1+\eta)}\delta\Big)^3,\text{ for }v_j\in \mathcal{V}_j^{\frac{1-\eta}{2(1+\eta)}\delta},0\leq t_j.
\end{equation}
\begin{proof}

For $t_j\leq 1$,
\[|\int_{t_j}^{t_{j+1}}v_j ds|^2=|x_{j+1}-x_j|^2\gtrsim |(x_{j+1}-x_j)\cdot n(x_j)|\]
\[=|\int_{t_j}^{t_{j+1}}v_j\cdot n(x_j)ds|=|v_j\cdot n(x_j)||t_j-t_{j+1}|.\]
Here we use the fact that if $x,y\in \partial \Omega$ and $\partial \Omega$ is $C^2$ and $\Omega$ is bounded then $|x-y|^2\gtrsim_\Omega |(x-y)\cdot n(x)|$( see the proof in~\cite{EGKM} ). Thus
\begin{equation}\label{}
  |v_j\cdot n(x_j)|\lesssim \frac{1}{|t_j-t_{j+1}|}|\int_{t_j}^{t_{j+1}} v_jds|^2
  \lesssim |t_j-t_{j+1}||v_j|^2.
\end{equation}
Since $v_j\in \mathcal{V}^{\frac{1-\eta}{2(1+\eta)}\delta}_j$, $t_j\leq 0$, let $0\leq t\leq t'$, we have
\begin{equation}\label{}
  |v_j\cdot n(x_j)|\lesssim |t_j-t_{j+1}|\Big(\frac{1-\eta}{2(1+\eta)}\delta \Big)^{-2}.
\end{equation}
Then we prove~\eqref{eqn:claim_delta}.
\end{proof}

In consequence, when $t_k> 0$, by~\eqref{eqn:claim_delta} and $t\ll 1$, there can be at most $\{[C_{\Omega}(\frac{2(1+\eta)}{(1-\eta)\delta})^3]+1\}$ numbers of $v_j\in \mathcal{V}_j^{\frac{1-\eta}{2(1+\eta)}\delta}$.
Equivalently there are at least $k-2-[C_{\Omega}(\frac{2(1+\eta)}{(1-\eta)\delta})^3]+1$ numbers of $v_j\in \mathcal{V}_j\backslash \mathcal{V}_j^{\frac{1-\eta}{2(1+\eta)}\delta}$.

\textbf{Step 3}

In this step we combine Step 1 and Step 2 and focus on the integration over $\prod_{j=1}^{k-1} \mathcal{V}_j$.

By~\eqref{eqn:claim_delta} in Step 2, we define
\begin{equation}\label{eqn: N}
N:=\Big[C_{\Omega}\big(\frac{2(1+\eta)}{\delta(1-\eta)}\big)^3\Big]+1.
\end{equation}
For the sequence $\{v_1,v_2,\cdots,v_{k-1}\}$, suppose there are $p$ number of $v_j\in \mathcal{V}_j^{\frac{1-\eta}{2(1+\eta)}\delta}$ with $p\leq N$, we conclude there are at most $\left(
                                                                                  \begin{array}{c}
                                                                                    k-1 \\
                                                                                    p \\
                                                                                  \end{array}
                                                                                \right)
$ number of these sequences. Below we only consider a single sequence of them.

In order to get~\eqref{eqn: eta_paral},\eqref{eqn: eta perp}$<1$, we need to ensure the condition~\eqref{eqn: e1}. Thus we take $k=k_1(T_M,\xi,r_\perp,r_\parallel)$ and only use the decomposition $\mathcal{V}_j=\Big(\mathcal{V}_j\backslash \mathcal{V}_j^{\frac{1-\eta}{2(1+\eta)}\delta} \Big)   \cup \mathcal{V}_j^{\frac{1-\eta}{2(1+\eta)}\delta}$ for $\prod_{j=1}^{k/2} \mathcal{V}_j$. Then we only consider the half sequence $\{v_1,v_2,\cdots,v_{k/2}\}$. We derive that when $t_k>0$, there are at most $N$ number of $v_j\in \mathcal{V}_j^{\frac{1-\eta}{2(1+\eta)}\delta}$ and at least $k/2-1-N$ number of $v_j\in \mathcal{V}_j\backslash \mathcal{V}_j^{\frac{1-\eta}{2(1+\eta)}\delta}$ in $\prod_{j=1}^{k/2}\mathcal{V}_j$.

In this single half sequence $\{v_1,\cdots, v_{k/2}\}$, in order to apply Lemma \ref{Lemma: Step3}, we only want to consider the subsequence~\eqref{eqn: sequence} with $l+1<l+L\leq k/2$ and $L\geq 100\frac{1+\eta}{1-\eta}$. Thus we need to ignore those subsequence with $L<100\frac{1+\eta}{1-\eta}$. By~\eqref{eqn: sequence}, we conclude that at the end of this subsequence, it is adjacent to a $v_l\in \mathcal{V}_{l}^{\frac{1-\eta}{2(1+\eta)}\delta}$. By~\eqref{eqn: N}, we conclude
\begin{equation}\label{Conclude}
      \textit{There are at most $N$ number of subsequences~\eqref{eqn: sequence} with $L\leq 100\frac{1+\eta}{1-\eta}$}.
\end{equation}
We ignore these subsequences. Then we define the parameters for the remaining subsequence( with $L\geq 100\frac{1+\eta}{1-\eta}$ ) as:
\[M_1:= \text{the number of $v_j\in \mathcal{V}_j\backslash \mathcal{V}_j^{\frac{1-\eta}{2(1+\eta)}\delta}$ in the first subsequence starting from $v_1$},\]
\[n:= \text{the number of these subsequences}.\]
Similarly we can define $M_2,M_3,\cdots, M_n$ as the number in the second, third, $\cdots$, $n$-th subsequence. Recall that we only consider $\prod_{j=1}^{k/2} \mathcal{V}_j$, thus we have
\begin{equation}\label{eqn: Mi number}
100\frac{1+\eta}{1-\eta}\leq M_i\leq k/2,   \text{ for } 1\leq i\leq n.
\end{equation}
By~\eqref{Conclude}, we obtain
\begin{equation}\label{eqn: sum of M_i}
  k/2 \geq M_1+\cdots M_n\geq k/2-1-100\frac{1+\eta}{1-\eta}N>\frac{k}{2}-101\frac{1+\eta}{1-\eta}N.
\end{equation}
Take $M_i$ with $1\leq i\leq n$ as an example. Suppose this subsequence starts from $v_{l_i+1}$ to $v_{l_i+M_i}$, by~\eqref{eqn: Step3} in Lemma \ref{Lemma: Step3} with replacing $l$ by $l_i$ and $L$ by $M_i$, we obtain
\begin{equation}\label{eqn: M_i conclusion}
\int_{\prod_{j={l_i}}^{k-1} \mathcal{V}_j}\mathbf{1}_{\{t_k>0\}}\mathbf{1}_{\{v_{l_i+j}\in \mathcal{V}_{l_i+j}\backslash \mathcal{V}_{l_i+j}^{\frac{1-\eta}{2(1+\eta)}\delta} \text{ for } 1\leq j\leq M_i\}}d\Phi_{l_i,m}^{k,k-1}(t_k) \leq  (3\delta)^{M_i/2}   (C_{T_M,\xi})^{2(k-l)}\mathcal{A}_{k-1,l_i}.
\end{equation}

Since~\eqref{eqn: M_i conclusion} holds for all $1\leq i\leq n$, by Lemma \ref{Lemma: accumulate} we can draw the conclusion for the Step 3 as follows. For a single sequence $\{v_1,v_2,\cdots,v_{k-1}\}$, when there are $p$ number $v_j\in \mathcal{V}_{j}^{\frac{1-\eta}{2(1+\eta)}\delta}$, we have
\[\int_{\prod_{j=1}^{k-1} \mathcal{V}_j}   \mathbf{1}_{\{\text{$p$ number $v_j\in \mathcal{V}_{j}^{\frac{1-\eta}{2(1+\eta)}\delta}$ for a single sequence}\}}    \mathbf{1}_{\{t_k>0\}} d\Sigma_{k-1,m}^{k}(t_k)\]
\begin{equation}\label{eqn: Step 3 conclusion}
\leq (3\delta)^{(M_1+\cdots+M_n)/2}  (C_{T_M,\xi})^{2k} \mathcal{A}_{k-1,1}.
\end{equation}

\textbf{Step 4}

Now we are ready to prove the lemma. By~\eqref{eqn: N}, we have
\[\int_{\prod_{j=1}^{k-1}\mathcal{V}_j} \mathbf{1}_{\{t_k>0\}} d\Sigma_{k-1,m}^k(t_k)\]
\begin{equation}\label{eqn: proof step3}
\leq \sum_{p=1}^{N}\int_{\{\text{Exactly $p$ number of $v_j\in \mathcal{V}_j^{\frac{1-\eta}{2(1+\eta)}\delta}$ }\}} \mathbf{1}_{\{t_k>0\}} d\Sigma_{k-1,m}^k(t_k).
\end{equation}
Since~\eqref{eqn: Step 3 conclusion} holds for a single sequence, we derive
\[\eqref{eqn: proof step3}\leq (C_{T_M,\xi})^{2k}\sum_{p=1}^N\left(
    \begin{array}{c}
      k-1 \\
     p \\
    \end{array}
  \right)(3\delta)^{(M_1+M_2+\cdots M_n)/2} \mathcal{A}_{k-1,1}
\]
\begin{equation}\label{eqn: tk coe}
\leq (C_{T_M,\xi})^{2k}N(k-1)^N(3\delta)^{k/4-101\frac{1+\eta}{1-\eta}N}\mathcal{A}_{k-1,1},
\end{equation}
where we use~\eqref{eqn: sum of M_i} in the second line.

Take $k=N^3$, the coefficient in~\eqref{eqn: tk coe} is bounded by
\begin{equation}\label{eqn: Finally}
(C_{T_M,\xi})^{2N^3}N^{3N+1} (3\delta)^{N^3/4-101\frac{1+\eta}{1-\eta}N}\leq (C_{T_M,\xi})^{2N^3} N^{4N}(3\delta)^{N^3/5},
\end{equation}
where we choose $N=N(\eta)$ large such that $N^3/4-101\frac{1+\eta}{1-\eta}N\geq N^3/5$.

Using~\eqref{eqn: N}, we derive
\[3\delta=C(\Omega,\eta)N^{-1/3}.\]
Finally we bound~\eqref{eqn: Finally} by
\[(C_{T_M,\xi})^{2N^3}N^{4N}(C(\Omega,\eta)N^{-1/3})^{N^3/5}\leq e^{2N^3\log(C_{T_M,\xi})} e^{4N\log N}e^{(N^3/5)\log(C(\Omega,\eta)N^{-1/3})}\]
\[=e^{4N \log N}e^{(N^3/5)(log(C(\Omega,\eta))-\frac{1}{3}\log N)}e^{2N^3 \log(C_{T_M,\xi})}= e^{4N\log N-\frac{N^3}{15}(\log N-3\log C_{\Omega,\eta}-30\log C_{T_M,\xi})}\]
\[\leq e^{4N\log N-\frac{N^3}{30}\log N}\leq e^{-\frac{N^3}{50}\log N}=e^{-\frac{k}{150}\log k}\leq (\frac{1}{2})^k,\]
where we choose $\delta$ to be small enough in the second line such that $N=N(\Omega,\eta,C_{T_M,\xi})$ is large enough to satisfy
\[\log N -3\log C(\Omega,\eta)-30 \log C_{T_M,\xi}\geq \frac{\log N}{2},\]
\[4N\log N-\frac{N^3}{30}\log N\leq -\frac{N^3}{50}\log N.\]
And thus we choose $k=N^3=k_2=k_2(\Omega,\eta,C_{T_M,\xi})$ and we also require $\log k>150$ in the last step. Then we get~\eqref{eqn: 1/2 decay}.

Therefore, by the condition~\eqref{eqn: e1}, we choose $k=k_0=\max\{k_1,k_2\}$. By the definition of $\eta$~\eqref{eqn: eta} with~\eqref{eqn: eta_paral} and~\eqref{eqn: eta perp}, we obtain $\eta=\eta(T_M,\mathcal{C},r_\perp,r_\parallel,\e_2)$. Thus by~\eqref{eqn: e2 dependence} and~\eqref{eqn: k_1 dependence}, we conclude the $k_0$ we choose here does not depend on $t$ and only depends on the parameter in~\eqref{eqn: k_0 dependence}. We derive the lemma.

\end{proof}

\begin{proof}[\textbf{Proof of Proposition \ref{proposition: boundedness}}]
First we take
\begin{equation}\label{eqn: first condition for tinf}
t_{\infty}\leq t'.
\end{equation}
with $t'$ defined in~\eqref{eqn: t'}. Then we let $k=k_0$ with $k_0$ defined in~\eqref{eqn: k_0 dependence} so that we can apply Lemma \ref{lemma: t^k} and Lemma \ref{lemma: boundedness}. Define the constant in~\eqref{eqn: fm is bounded} as
\begin{equation}\label{eqn: Cinfty}
  C_\infty=3(C_{T_M,\xi})^{k_0}.
\end{equation}

We mainly use the formula given in Lemma \ref{lemma: the tracjectory formula for f^(m+1)}. We consider two cases.
\begin{description}
\item[Case1] $t_1\leq 0$,
\end{description}
By~\eqref{eqn: Duhamal principle for case1} and using the definition of $\Gamma^m_{\text{gain}}(s)$ in~\eqref{eqn: gamma^m} we have
\begin{equation}\label{eqn: first term}
|h^{m+1}(t,x,v)|\leq | h_0(X^1(0;t,x,v),v)|
\end{equation}
\begin{equation}\label{eqn: second term}
   +\int_0^t e^{|v|^2(\theta-t)}  \int_{\mathbb{R}^3\times \mathbb{S}^2}B(v-u,w)\sqrt{\mu(u)} \Big|\frac{h^{m}(s,X^1(s),u')}{e^{|u'|^2(\theta-s)}}\Big| \Big|\frac{h^{m}(s,X^1(s),v')}{e^{|v'|^2(\theta-s)}}\Big|      d\omega duds,
\end{equation}
where $u'=u'(u,v)$ and $v'=v'(u,v)$ are defined by~\eqref{eqn: u' v'}. Then we have
\[\eqref{eqn: second term}\leq(\sup_{0\leq s\leq t} \Vert h^m(s)\Vert_{L^{\infty}})^2 \times\int_0^t  \int_{\mathbb{R}^3\times \mathbb{S}^2}e^{|v|^2(\theta-t)}  B\big(v-u,w\big)\]
\[ \sqrt{\mu(u)}  e^{(|u|^2+|v|^2)(s-\theta)}   d\omega duds\]
\[\lesssim (\sup_{0\leq s\leq t} \Vert h^m(s)\Vert_{L^{\infty}})^2 \int_0^t \int_{\mathbb{R}^3}e^{|v|^2(s-t)} |v-u|^\mathcal{K}\sqrt{\mu} e^{|u|^2(s-\theta)}   du ds\]
\[\lesssim_{C_\infty} \Vert h_0\Vert_{L^\infty}^2\int_0^t e^{|v|^2(s-t)} \langle v\rangle^{\mathcal{K}+3} ds\]
\[\leq\Vert h_0\Vert_{L^\infty}^2\int_0^t e^{|v|^2(s-t)} \langle v\rangle^{4} \{\mathbf{1}_{|v|>N}+\mathbf{1}_{|v|\leq N}\}ds\]
\[\lesssim_{\Vert h_0\Vert_\infty}\big(\frac{1}{N^2}+Nt\big),\]
where $-3<\mathcal{K}\leq 1$. Therefore, we obtain
\begin{equation}\label{eqn: Gamma bounded by}
~\eqref{eqn: second term}\leq C(C_\infty,\Vert h_0\Vert_\infty)(\frac{1}{N^2}+Nt)\leq \frac{1}{k_0}\Vert h_0\Vert_\infty,
\end{equation}
where we choose
\begin{equation}\label{eqn: second condition for tinf}
N=N(C_\infty,\Vert h_0\Vert_\infty,k_0)\gg 1,\quad
t_\infty=t_\infty(N,C_\infty,\Vert h_0\Vert_\infty,k_0)\ll 1,
\end{equation}
with $t\leq t_\infty$ to obtain the last inequality in~\eqref{eqn: Gamma bounded by}.

Finally collecting~\eqref{eqn: first term} and~\eqref{eqn: second term} we obtain
\begin{equation}\label{eqn: hm+1 bounded case 1}
 \Vert h^{m+1}(t,x,v)\mathbf{1}_{\{t_1\leq 0\}}\Vert_\infty \leq   2\Vert h_0\Vert_\infty\leq C_\infty\Vert h_0\Vert_\infty,
\end{equation}
where $C_\infty$ is defined in~\eqref{eqn: Cinfty}.

\begin{description}
\item[Case2] $t_1\geq 0$,
\end{description}
We consider~\eqref{eqn: Duhamel principle for case 2} in Lemma \ref{lemma: the tracjectory formula for f^(m+1)}. First we focus on the first line. By~\eqref{eqn: Gamma bounded by} we obtain
\begin{equation}\label{eqn: first line bounded}
\int_{t_1}^t e^{|v|^2(\theta-t)} \Gamma_{\text{gain}}^m(s)ds \leq \frac{1}{k_0}\Vert h_0\Vert_\infty.
\end{equation}
Then we focus on the second line of~\eqref{eqn: Duhamel principle for case 2}. Using $\theta=\frac{1}{4T_M\xi}$ we bound the second line of~\eqref{eqn: Duhamel principle for case 2} by
\begin{equation}\label{eqn: extra term to cancel}
\exp\bigg(\big[\frac{1}{2T_M\frac{2\xi}{\xi+1}}-\frac{1}{2T_w(x_1)}\big]|v|^2\bigg)\int_{\prod_{j=1}^{k_0-1}\mathcal{V}_j}H.
\end{equation}
Now we focus on $\int_{\prod_{j=1}^{k_0-1}\mathcal{V}_j}H$. We compute $H$ term by term with the formula given in~\eqref{eqn: formula for H}. First we compute the first line of~\eqref{eqn: formula for H}. By Lemma~\ref{lemma: boundedness} with $p=1$, for every $1\leq l\leq k_0-1$, we have
\[\int_{\prod_{j=1}^{k_0-1}\mathcal{V}_j}  \mathbf{1}_{\{t_{l+1}\leq 0<t_l\}} |h_0\big(X^{m-l}(0),V^{m-l}(0)\big)|  d\Sigma_{l,m}^{k_0}(0)\leq \Vert h_0\Vert_\infty  \int_{\prod_{j=1}^{k_0-1}\mathcal{V}_j}  \mathbf{1}_{\{t_{l+1}\leq 0<t_l\}}   d\Sigma_{l,m}^{k_0}(0)\]
\begin{equation}\label{eqn: l term}
     \leq (C_{T_M,\xi})^l\Vert h_0\Vert_{\infty}\exp\bigg(\frac{(T_{l,1}-T_w(x_1))(1-r_{min})}{2T_w(x_1)[T_{l,1}(1-r_{min})+r_{min} T_w(x_1)]}|v|^2+\mathcal{C}^{l}t|v|^2\bigg).
\end{equation}
In regard to~\eqref{eqn: extra term to cancel} we have
\[\exp\bigg(\big[\frac{1}{2T_M\frac{2\xi}{\xi+1}}-\frac{1}{2T_w(x_1)}\big]|v|^2\bigg)\times ~\eqref{eqn: l term}=\]
\[(C_{T_M,\xi})^l\Vert h_0\Vert_{\infty}\exp\bigg(\Big[\frac{-1}{2\big(T_w(x_1)r_{min}+T_{l,1}(1-r_{min})\big)}+\frac{1}{2T_M\frac{2\xi}{\xi+1}}\Big]|v|^2+(\mathcal{C})^{l}t|v|^2\bigg).\]
Using the definition~\eqref{eqn: definition of T_p} we have $T_w(x_1)<\frac{2\xi}{\xi+1}T_M$ and $T_{l,1}<\frac{2\xi}{\xi+1}T_M$. Then we take
\begin{equation}\label{eqn: third condition for tinfty}
t_\infty=t_\infty(T_M,k_0,\xi,\mathcal{C})
\end{equation}
to be small enough and $t\leq t_\infty$ so that the coefficient for $|v|^2$ is
\[\frac{-1}{2\big(T_w(x_1)r_{min}+T_{l,1}(1-r_{min})\big)}+\frac{1}{2T_M\frac{2\xi}{\xi+1}}+(\mathcal{C})^{l}t\]
\begin{equation}\label{eqn: less than 0}
\leq \frac{-1}{2\big(T_Mr_{min}+T_{l,1}(1-r_{min})\big)}+\frac{1}{2T_M\frac{2\xi}{\xi+1}}+(\mathcal{C})^{k_0}t\leq 0.
\end{equation}
Since~\eqref{eqn: l term} holds for all $1\leq l\leq k_0-1$, by~\eqref{eqn: less than 0} the contribution of the first line of~\eqref{eqn: formula for H} in ~\eqref{eqn: extra term to cancel} is bounded by
\begin{equation}\label{eqn: first term bounded}
(C_{T_M,\xi})^{k_0}\Vert h_0\Vert_{\infty}.
\end{equation}

Then we compute the second line of~\eqref{eqn: formula for H}. For each $1\leq l\leq k_0-1$ such that $\max\{0,t_{l+1}\}\leq s\leq t_l$, by~\eqref{eqn:trajectory measure}, we have
\[d\Sigma_{l,m}^{k_0}(s)=e^{-|v_l|^2(t_l-s)} d\Sigma_{l,m}^{k_0}(t_l).\]
Therefore, we derive
\[\int_{\max\{0,t_{l+1}\}}^{t_l}\int_{\prod_{j=1}^{k_0-1}\mathcal{V}_{j}} e^{|v_l|^2(\theta-s)} |\Gamma_{\text{gain}}^{m-l}(s)|d\Sigma_{l,m}^{k_0}(s)ds\]
\[\leq \int_{\prod_{j=1}^{k_0-1}\mathcal{V}_{j}}\int_{\max\{0,t_{l+1}\}}^{t_l}e^{|v_l|^2(\theta-t_l)} |\Gamma_{\text{gain}}^{m-l}(s)| ds d\Sigma_{l,m}^{k_0}(t_l)\]
\[\leq \frac{1}{k_0}\Vert h_0\Vert_\infty\int_{\prod_{j=1}^{k_0-1}\mathcal{V}_j} \Sigma_{l,m}^{k_0}(t_l)\]
\begin{equation}\label{eqn: last line second}
\leq \frac{1}{k_0}\Vert h_0\Vert_\infty (C_{T_M,\xi})^l \exp\bigg(\frac{(T_{l,1}-T_w(x_1))(1-r_{min})}{2T_w(x_1)[T_{l,1}(1-r_{min})+r_{min} T_w(x_1)]}|v|^2+(\mathcal{C})^{l}t|v|^2\bigg),
\end{equation}
where we apply~\eqref{eqn: Gamma bounded by} in the third line and we apply Lemma \ref{lemma: boundedness} in the last line.

In regard to~\eqref{eqn: extra term to cancel}, by~\eqref{eqn: less than 0} we obtain
\[\exp\bigg(\big[\frac{1}{2T_M\frac{2\xi}{\xi+1}}-\frac{1}{2T_w(x_1)}\big]|v|^2\bigg)\times ~\eqref{eqn: last line second}\leq \frac{1}{k_0}(C_{T_M,\xi})^l \Vert h_0\Vert_\infty.\]

Since~\eqref{eqn: last line second} holds for all $1\leq l\leq k_0-1$, the contribution of the second line of~\eqref{eqn: formula for H} in~\eqref{eqn: extra term to cancel} is bounded by
\begin{equation}\label{eqn: second term bounded}
  \frac{k_0-1}{k_0}(C_{T_M,\xi})^{k_0}\Vert h_0\Vert_\infty.
\end{equation}

Last we compute the third term of~\eqref{eqn: formula for H}. By Lemma \ref{lemma: t^k} and the assumption~\eqref{eqn: fm is bounded} we obtain
\[\int_{\prod_{j=1}^{k_0-1}\mathcal{V}_j}  \mathbf{1}_{\{0<t_{k_0}\}} |h^{m-k_0+2}\big(t_{k_0},x_{k_0},V^{m-k_0+1}(t_{k_0})\big)|  d\Sigma_{k_0-1,m}^{k_0}(t_{k_0})\leq \Vert h^{m-k_0+2}\Vert_\infty  \int_{\prod_{j=1}^{k_0-1}\mathcal{V}_j}  \mathbf{1}_{\{0<t_{k_0}\}}   d\Sigma_{k_0-1,m}^{k_0}(t_{k_0})\]
\begin{equation}\label{eqn: third term bounded in H}
       \leq      3(C_{T_M,\xi})^{k_0} (\frac{1}{2})^{k_0}\Vert h_0\Vert_\infty\exp\bigg(\frac{(T_{l,1}-T_w(x_1))(1-r_{min})}{2T_w(x_1)[T_{l,1}(1-r_{min})+r_{min} T_w(x_1)]}|v|^2+(\mathcal{C})^{l}t|v|^2\bigg).
\end{equation}

In regard to~\eqref{eqn: extra term to cancel}, by~\eqref{eqn: less than 0} we have
\[\exp\bigg(\big[\frac{1}{2T_M\frac{2\xi}{\xi+1}}-\frac{1}{2T_w(x_1)}\big]|v|^2\bigg)\times ~\eqref{eqn: third term bounded in H}\leq (C_{T_M,\xi})^{k_0} \Vert h_0\Vert_\infty.\]
Thus the contribution of the third line of~\eqref{eqn: formula for H} in~\eqref{eqn: extra term to cancel} is bounded by
\begin{equation}\label{eqn: third term bounded}
(C_{T_M,\xi})^{k_0} \Vert h_0(x,v)\Vert_\infty.
\end{equation}

Collecting~\eqref{eqn: first term bounded}~\eqref{eqn: second term bounded}~\eqref{eqn: third term bounded} we conclude that the second line of~\eqref{eqn: Duhamel principle for case 2} is bounded by
\begin{equation}\label{eqn: second line bounded}
(C_{T_M,\xi})^{k_0}\times (2+\frac{k_0-1}{k_0}) \Vert h_0\Vert_\infty.
\end{equation}
Adding~\eqref{eqn: second line bounded} to~\eqref{eqn: first line bounded} we use~\eqref{eqn: Duhamel principle for case 2} to derive
\begin{equation}\label{eqn: hm+1 bounded case 2}
\Vert h^{m+1}(t,x,v)\mathbf{1}_{\{t_{1}\geq 0\}}\Vert_\infty\leq 3(C_{T_M,\xi})^{k_0} \Vert h_0\Vert_\infty=C_\infty \Vert h_0\Vert_\infty.
\end{equation}

Combining~\eqref{eqn: hm+1 bounded case 1} and~\eqref{eqn: hm+1 bounded case 2} we derive~\eqref{eqn: L_infty bound for f^m+1}.

Last we focus the parameters for $t_\infty$ in~\eqref{eqn: t_1}. In the proof the constraints for $t_\infty$ are~\eqref{eqn: first condition for tinf},~\eqref{eqn: second condition for tinf} and~\eqref{eqn: third condition for tinfty}. We obtain
\[t_\infty=t_\infty(t',N,C_\infty,\Vert h_0\Vert_\infty,T_M,k_0,\xi,\mathcal{C})=t_\infty(k_0,\xi,T_M,\min\{T_w(x)\},\mathcal{C},r_\perp,r_\parallel,C_{T_M,\xi},\Vert h_0\Vert_\infty).\]
By the definition of $k_0$ in~\eqref{eqn: k_0 dependence}, definition of $C_{T_M,\xi}$ in~\eqref{eqn: 1 one}, definition of $\mathcal{C}$ in~\eqref{eqn: cal C}, we derive~\eqref{eqn: t_1}.

\end{proof}

Then we can conclude the well-posedness.

\begin{proof}[\textbf{Proof of Theorem~\ref{local_existence}}]
First of all we take $t<t_\infty$, where $t_\infty$ is defined in~\eqref{eqn: t_1} so that we can apply Proposition \ref{proposition: boundedness}. We have
\[\sup_{m}\Vert h^m\Vert_\infty\lesssim \Vert h(0)\Vert_\infty.\]

 \begin{itemize}
    \item Existence
  \end{itemize}
For $h^m$ given in~\eqref{eqn: hm+1}, we take the difference $h^{m+1}-h^m$ and deduce that
\[\partial_t [h^{m+1}-h^m]+v\cdot \nabla_x [h^{m+1}-h^m]+\nu^m(h^{m+1}-h^m)=e^{(\theta-t)|v|^2}\Lambda^m,\]
 \[[h^{m+1}-h^m]_-=e^{(\theta-t)|v|^2}e^{[\frac{1}{4T_M}-\frac{1}{2T_w(x)}]|v|^2}\int_{n(x)\cdot u>0} [h^{m+1}(u)-h^m(u)]e^{-[\frac{1}{4T_M}-\frac{1}{2T_w(x)}]|u|^2}e^{-(\theta-t)|u|^2}   d\sigma(u,v),\]
where
\[\Lambda^m=\Gamma_{\text{gain}}\Big(\frac{h^m-h^{m-1}}{e^{(\theta-t)|v|^2}},\frac{h^m}{e^{(\theta-t)|v|^2}}\Big)+\Gamma_{\text{gain}}\Big(\frac{h^{m-1}}{e^{(\theta-t)|v|^2}},\frac{h^m-h^{m-1}}{e^{(\theta-t)|v|^2}}\Big)+[\nu(F^{m-1})-\nu(F^m)]h^{m-1}.\]

By the same derivation as~\eqref{eqn: Duhamal principle for case1}~\eqref{eqn: Duhamel principle for case 2}, when $t_1\leq 0$, we have
\[|h^{m+1}-h^m|(t,x,v)\leq  \int_0^t e^{|v|^2(\theta-t)}\int_{\mathbb{R}^3\times \mathbb{S}^2} B(v-u,w)\sqrt{\mu}\Big[ \Big|\frac{(h^m-h^{m-1})(s,X^1(s),u')}{e^{|u'|(\theta-s)}}\Big| \Big|\frac{h^m(s,X^1(s),v')}{e^{|v'|(\theta-s)}} \Big|     \]
\[+\Big|\frac{h^m(s,X^1(s),u')}{e^{|u'|(\theta-s)}}\Big|\Big|\frac{(h^m-h^{m-1})(s,X^1(s),v')}{e^{|v'|(\theta-s)}}\Big| +\Big|\frac{(h^m-h^{m-1})(s,X^1(s),u)}{e^{|u|^2(\theta-s)}}\Big|\Big|\frac{h^{m-1}(s,X^1(s),v)}{e^{|v|^2(\theta-s)}}\Big|\bigg]d\omega duds,\]
where we use $h^{m+1}(0)=h^m(0)$.

Then we follow the computation for~\eqref{eqn: second term} to obtain
\[|h^{m+1}-h^m|(t,x,v)\lesssim (\Vert h^m-h^{m-1}\Vert_\infty)\Vert h^m\Vert_\infty\times \int_0^t \int_{\mathbb{R}^3\times\mathbb{S}^2}e^{|v|^2(\theta-t)}B(v-u,\omega)\]
 \[\sqrt{\mu(u)}e^{(|u|^2+|v|^2)(s-\theta)}d\omega duds\]
 \begin{equation}\label{eqn: o(1)}
 \lesssim \Vert h^{m}-h^{m-1}\Vert_\infty \Vert h^m\Vert_\infty (\frac{1}{N^2}+Nt)\lesssim o(1)\Vert h^m-h^{m-1}\Vert_\infty ,
 \end{equation}
where we take $N=N(\Vert h^m\Vert_\infty)$ to be large and $t<t_\infty=t_\infty(N)$ to be small as in~\eqref{eqn: second condition for tinf}.

When $t_1>0$, by the same derivation as~\eqref{eqn: Duhamel principle for case 2}, we have
\[|h^{m+1}-h^m|(t,x,v)\leq \int_{t_1}^t e^{|v|^2(\theta-t)}\Lambda^m ds+e^{|v|^2(\theta-t_1)}e^{[\frac{1}{4T_M}-\frac{1}{2T_w(x_1)}]|v|^2}\int_{\prod_{j=1}^{k-1}\mathcal{V}_j}H_d,\]
where $H_d$ is bounded by
\begin{equation}\label{eqn: Hd}
\begin{split}
    & \sum_{l=1}^{k-1}\int_{\max\{0,t_{l+1}\}}^{t_l}e^{|v_l|^2(\theta-s)}|\Lambda^m(s)|d\Sigma_{l,m}^k(s)ds \\
     & +\mathbf{1}_{\{t_k>0\}}|h^{m-k+2}-h^{m-k+1}|(t_k,x_k,v_{k-1})d\Sigma_{k-1,m}^k(t_k).
\end{split}
\end{equation}
By~\eqref{eqn: last line second} and~\eqref{eqn: o(1)}, the first line of~\eqref{eqn: Hd} is bounded by
\[k_0O(t)\sup_{\ell\leq m}\Vert h^{\ell}-h^{\ell-1}\Vert_\infty=o(1)\sup_{\ell\leq m}\Vert h^{\ell}-h^{\ell-1}\Vert_\infty,\]
where we take $t<t_\infty=t_\infty(k_0)$ to be small.

Then we apply~\eqref{eqn: third term bounded in H}~\eqref{eqn: third term bounded} with replacing $\Vert h^{m-k_0+2}\Vert_\infty$ by $\Vert h^{m-k_0+2}-h^{m-k_0+1}\Vert_\infty$. Thus we obtain the second line of~\eqref{eqn: Hd} is bounded by
\[\Big(\frac{1}{2}\Big)^{k_0}\sup_{\ell\leq m}\Vert h^{\ell}-h^{\ell-1}\Vert_\infty.\]
Thus in the case $t_1>0$ we obtain
\begin{equation}\label{eqn: Linfty Cauchy}
\Vert h^{m+1}-h^m\Vert_\infty\leq o(1)\sup_{\ell\leq m}\Vert h^{\ell}-h^{\ell-1}\Vert_\infty.
\end{equation}

Therefore, $h^m$ is a Cauchy-sequence in $L^\infty$. The existence follows by taking the limit $m\to\infty$ and the solution $h=e^{(\theta-t)|v|^2}f$ satisfies
\begin{equation}\label{eqn: equation for h}
\partial_t h+v\cdot \nabla_x h+|v|^2 h =e^{(\theta-t)|v|^2} \Gamma\left(\frac{h}{e^{(\theta-t)|v|^2}},\frac{h}{e^{(\theta-t)|v|^2}}\right).
\end{equation}
Moreover, we have
\begin{equation}\label{eqn: h linfty}
\Vert h\Vert_\infty\leq \sup_m\Vert h^m\Vert_\infty \lesssim \Vert h(0)\Vert_\infty.
\end{equation}
This concludes the existence of $f$ and~\eqref{infty_local_bound}.

 \begin{itemize}
   \item Stability
 \end{itemize}
Suppose there are two solutions $h_1$ and $h_2$ satisfy~\eqref{eqn: equation for h}. Also suppose there initial condition satisfy
\[\Vert h_1(0)\Vert_\infty,\Vert h_2\Vert_\infty<\infty.\]
When $t_1\leq 0$, by the same derivation as~\eqref{eqn: Gamma bounded by} and~\eqref{eqn: o(1)} we have
\[|h_1-h_2|(t,x,v)\lesssim |h_1-h_2|(0)+(\Vert h_1\Vert_\infty+\Vert h_2\Vert_\infty)\int_0^t \Vert h_1-h_2\Vert_\infty e^{|v|^2(s-t)}\langle v\rangle^{4}\{\mathbf{1}_{|v|>N}+\mathbf{1}_{|v|\leq N}\}ds\]
\[\lesssim \Vert (h_1-h_2)(0)\Vert_\infty+(\Vert h_2\Vert_\infty+\Vert h_1\Vert_\infty)\Big[ O(\frac{1}{N^2}) \Vert h_1-h_2\Vert_\infty+\int_0^t N\Vert h_1-h_2\Vert_\infty ds\Big].\]
By taking $N=N(\Vert h_1\Vert_\infty,\Vert h_2\Vert_\infty)$ to be large as in~\eqref{eqn: second condition for tinf} so that $(\Vert h_2\Vert_\infty+\Vert h_1\Vert_\infty) O(\frac{1}{N^2})\ll 1$, we derive the $L^\infty$ stability by the Gronwall's inequality.

When $t_1>0$, the argument is exactly the same as the existence part and we conclude the $L^\infty$ stability for all cases. The uniqueness follows immediately by setting $h_1(0)=h_2(0)$.

The positivity follows from the the property that iteration equation~\eqref{eqn: Fm+1} is positive preserving and~\eqref{eqn: Linfty Cauchy}.

\end{proof}

\section{Steady problem with C-L boundary condition}
This section is devoted to the steady solution to the Boltzmann equation with the Cercignani-Lampis boundary condition as mention in Section 1.2.
\begin{remark}
The setting of the steady solution is given in Section 1.2. We remark here that in this section we no longer use notation $\mu$. Instead we put the subscript $\mu_0,\delta_0$ only for this section in order to avoid confusion.

\end{remark}

To prove Corollary \ref{Thm: steady solution} we need the following Proposition.
\begin{proposition}[Proposition 4.1 of \cite{EGKM}]\label{proposition: Linfty of steady solution}
Define a weight function scaled with parameter $\varrho$ as
\begin{equation}
w_{\varrho}(v)= w_{\varrho, \beta, \zeta}(v) \equiv (1+\varrho^2|v|^2)^{%
\frac{\beta}{2}} e^{\zeta|v|^2}.  \label{weight}
\end{equation}
Assume
\begin{equation}\label{eqn: g,r condition}
\iint_{\Omega\times \mathbb{R}^3} g(x,v)\sqrt{\mu_0}dxdv=0,\quad \quad \int_{\gamma_-}r\sqrt{\mu_0}d\gamma=0
\end{equation}
and $\beta>4$. Then the solution $f$
to the linear Boltzmann equation
\begin{equation}\label{eqn: linearized equation}
  v\cdot \nabla_x f+Lf=g,\quad \quad f_-=P_\gamma f+r
\end{equation}
 satisfies $\Vert w_{\varrho
}f\Vert _{\infty }+|w_{\varrho }f|_{\infty }\lesssim \Vert w_{\varrho }g\Vert _{\infty }+|w_{\varrho }\langle v\rangle r|_{\infty }.$
\end{proposition}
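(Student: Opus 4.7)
My plan is to adapt the $L^{2}$--$L^{\infty}$ bootstrap of Guo to the diffuse reflection setting. The starting point is an $L^{2}$ energy estimate: multiply $v\cdot\nabla_x f + Lf = g$ by $f$, integrate over $\Omega\times\mathbb{R}^{3}$, and use the coercivity $\langle Lf,f\rangle \geq c\,\|(\mathbf{I}-\mathbf{P})f\|_{\nu}^{2}$ together with the boundary identity that, for the diffuse projection $P_\gamma$, one has $|f|_{+}^{2} - |P_\gamma f|_{-}^{2} \gtrsim |(\mathbf{I}-P_\gamma)f|_{-}^{2}$ relative to the measure $c_\mu\mu_0|n\cdot v|dv$. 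Substituting $f_{-} = P_\gamma f + r$ then yields an identity controlling the microscopic dissipation plus $|(\mathbf{I}-P_\gamma)f|_{-}$, with the cross term involving $r$ absorbed by $|r|_{2}$.

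The next step is the macroscopic estimate controlling $\mathbf{P}f$. I would adopt the standard weak-formulation trick: construct test functions solving auxiliary elliptic problems whose data matches the five scalar coefficients $(a,b,c)$ of $\mathbf{P}f$, test the linearized equation against them, and extract $\|\mathbf{P}f\|_{2} \lesssim \|(\mathbf{I}-\mathbf{P})f\|_{\nu} + \|g\|_{2} + |(\mathbf{I}-P_\gamma)f|_{-} + |r|_{2}$. The two null-flux conditions $\iint g\sqrt{\mu_0}\,dxdv = 0$ and $\int_{\gamma_-} r\sqrt{\mu_0}\,d\gamma = 0$ are crucial here: they eliminate the only obstruction (the mass mode in the kernel of the transport-boundary adjoint) that would otherwise make the macroscopic problem ill-posed. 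Combining with the energy identity closes the $L^{2}$ estimate $\|f\|_{2} \lesssim \|g\|_{2} + |\langle v\rangle^{1/2} r|_{2}$.

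For the $L^{\infty}$ step I would apply the Duhamel formula for $w_\varrho f$ along backward characteristics, iterating through the boundary condition $f_{-} = P_\gamma f + r$ exactly as in Lemma~\ref{lemma: the tracjectory formula for f^(m+1)} but for the purely diffuse case, where the measure factors across bounces. The operator $K$ has a kernel $k(v,v_*)$ admitting a weighted smoothing bound $\int |w_\varrho(v) k(v,v_*)/w_\varrho(v_*)|\,dv_* \lesssim \langle v\rangle^{-1}$, which lets one convert the low-velocity part of $Kf$ into an $L^{2}$ norm while the high-velocity tail is absorbed by $w_\varrho$. The boundary contributions are handled by the $\gamma_+^{\delta}$ decomposition introduced in~\cite{G}: after a large number $k_0$ of bounces the stochastic measure $\mathbf{1}_{\{t_{k_0}>0\}}$ becomes small, since for the diffuse projection $c_\mu\mu_0|n\cdot v|dv$ is a probability measure \emph{independent} of the previous velocity (this is the easy analogue of Lemma~\ref{lemma: t^k}, where the subtlety of the C-L measure $d\sigma(v_k,v_{k-1})$ does not arise). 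Iterating yields $|w_\varrho f|_\infty + \|w_\varrho f\|_\infty \lesssim \|w_\varrho g\|_\infty + |w_\varrho\langle v\rangle r|_\infty + \|f\|_{2}$, and substituting the $L^{2}$ bound concludes the proof.

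The main obstacle I anticipate is not any individual step but the bookkeeping at the interface between them: the boundary dissipation produced by the $L^{2}$ identity must be exactly strong enough to close the macroscopic estimate under only the two null conditions stated, and the extra $\langle v\rangle$ in $|w_\varrho\langle v\rangle r|_\infty$ must be tracked carefully because each boundary visit consumes one velocity weight when $r$ is integrated against the probability measure $c_\mu\mu_0|n\cdot v|dv$. Verifying this weight loss is consistent with the $K$-kernel smoothing (so that the iteration of Duhamel remains summable in the number of bounces) is the most delicate accounting in the argument.
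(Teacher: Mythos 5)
The paper does not prove this proposition: it is imported verbatim (with attribution) from \cite{EGKM} and used as a black box in Section~3. There is therefore no internal proof to compare against. That said, your sketch is a faithful account of the $L^{2}$--$L^{\infty}$ bootstrap that \cite{EGKM} actually carries out for the steady linearized problem with diffuse reflection: the $L^{2}$ energy identity with coercivity of $L$ and the boundary dissipation $|(\mathbf{I}-P_\gamma)f|_{2,+}^{2}$ coming from the projection structure of $P_\gamma$; the macroscopic (``hydrodynamic'') estimate obtained by testing against solutions of auxiliary elliptic problems, where the two null-flux conditions in~\eqref{eqn: g,r condition} are precisely the solvability conditions; and finally the weighted Duhamel iteration with the $\gamma_+^{\delta}$ cutoff in the diffuse case, where (as you correctly emphasize) the boundary measure $c_\mu\mu_0|n\cdot v|dv$ factors across bounces so the smallness of $\mathbf{1}_{\{t_{k_0}>0\}}$ is much easier than in Lemma~\ref{lemma: t^k}. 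Your observation that the extra $\langle v\rangle$ on $r$ is tied to the collision-frequency/boundary bookkeeping in the Duhamel iteration is also correct.

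One point you should flag explicitly to make the sketch airtight: as stated, the estimate cannot hold without a mass-normalization constraint on $f$ itself (such as $\iint_{\Omega\times\mathbb{R}^{3}} f\sqrt{\mu_0}\,dxdv=0$), since $\sqrt{\mu_0}$ lies in the kernel of $v\cdot\nabla_x + L$ with $P_\gamma$-boundary conditions and $r=g=0$, so adding $c\sqrt{\mu_0}$ to any solution would violate the bound for $|c|$ large. The two conditions in~\eqref{eqn: g,r condition} guarantee solvability (they remove the constraint on the data), but uniqueness and hence the a~priori bound require the orthogonality condition on the solution as well; this is present in Corollary~\ref{Thm: steady solution} of the paper and in~\cite{EGKM}, and your macroscopic step implicitly uses it when ``eliminating the mass mode.'' Making that explicit is what actually closes the estimate you write as $\|f\|_{2}\lesssim\|g\|_{2}+|\langle v\rangle^{1/2}r|_{2}$.
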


For the purpose of applying Proposition \ref{proposition: Linfty of steady solution}, we focus on the boundary condition for the linearized equation $f_s$.
\begin{lemma}\label{Prop: perturbation}
For $F_s=\mu_0+\sqrt{\mu_0}f_s$ with $F_s$ satisfying the boundary condition~\eqref{eqn:BC},~\eqref{eqn: Formula for R}, the boundary condition for $f_s$ can be represented as
\begin{equation}\label{eqn: pertubation}
f_s|_-(x,v)=P_\gamma f_s +r
\end{equation}
such that
\begin{equation}\label{eqn: int r is 0}
  \int_{\gamma_-}r\sqrt{\mu_0}=0.
\end{equation}
Moreover,
\begin{equation}\label{eqn: bound for r}
| r|_\infty\lesssim \delta_0+ \sup_{0\leq s\leq t}\delta_0|f(s)|_\infty.
\end{equation}

\end{lemma}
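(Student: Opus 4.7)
Since~\eqref{eqn: pertubation} is just the definition $r:=f_s|_{\gamma_-}-P_\gamma f_s$, the content of the lemma is the zero-mean condition~\eqref{eqn: int r is 0} and the smallness bound~\eqref{eqn: bound for r}. I would dispose of~\eqref{eqn: int r is 0} first using the null-flux identity~\eqref{eqn: Null flux condition} applied to $F_s=\mu_0+\sqrt{\mu_0}f_s$: combined with $\int\mu_0(n\cdot v)\,dv=0$ this gives $\int_{\gamma_-}f_s\sqrt{\mu_0}|n\cdot v|\,dv=\int_{\gamma_+}f_s\sqrt{\mu_0}(n\cdot v)\,dv$. Plugging the explicit form~\eqref{eqn: diffuse projection} of $P_\gamma$ together with the normalization $c_\mu\int_{\gamma_-}\mu_0|n\cdot v|\,dv=1$ (which in fact yields $c_\mu=1$ for this choice of $\mu_0$) gives $\int_{\gamma_-}P_\gamma f_s\sqrt{\mu_0}|n\cdot v|\,dv=\int_{\gamma_+}f_s\sqrt{\mu_0}(n\cdot v)\,dv$ as well, so $\int_{\gamma_-}r\sqrt{\mu_0}\,d\gamma=0$ by subtraction.

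For~\eqref{eqn: bound for r} I substitute $F_s=\mu_0+\sqrt{\mu_0}f_s$ into~\eqref{eqn:BC}, divide by $\sqrt{\mu_0(v)}|n\cdot v|$, and split the result as $f_s|_{\gamma_-}=r_0+\mathcal{K}f_s$, where
\begin{equation*}
r_0(x,v):=\tfrac{1}{\sqrt{\mu_0(v)}|n\cdot v|}\Bigl[\int_{n\cdot u>0}R(u\to v;x)\mu_0(u)(n\cdot u)\,du-\mu_0(v)|n\cdot v|\Bigr],
\end{equation*}
and $\mathcal{K}f_s$ is the analogous integral carrying $\sqrt{\mu_0(u)}f_s(u)$ in place of $\mu_0(u)$. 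The pivotal observation is that at the reference parameters $(T_w(x),r_\perp,r_\parallel)=(T_0,1,1)$, formula~\eqref{eqn: Formula for R} collapses to the pure $T_0$-diffuse kernel $\tfrac{2}{\pi(2T_0)^2}|n\cdot v|e^{-|v|^2/(2T_0)}$; a direct computation then gives $r_0\equiv 0$ and $\mathcal{K}f_s\equiv P_\gamma f_s$ at that point. Under~\eqref{eqn: small pert condition} the three parameters $(T_w(x)-T_0,\,1-r_\perp,\,1-r_\parallel)$ are all $O(\delta_0)$, and since $R$ is smooth in them on the pertinent region---the Bessel factor is an even entire function, so $I_0(c(1-r_\perp)^{1/2})=1+O(1-r_\perp)$---a Taylor expansion produces a pointwise bound
\begin{equation*}
\bigl|R(u\to v;x)-R_{\mathrm{diff},T_0}(u\to v;x)\bigr|\lesssim\delta_0\,|n\cdot v|\,e^{-c|v|^2-c|u|^2}
\end{equation*}
for some $c>0$ depending on $T_0$, valid for $\delta_0$ small. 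Substituting this into $r_0$ and $\mathcal{K}f_s-P_\gamma f_s$ and using that the $|n\cdot v|$ and Gaussian factors absorb the singular weight $1/(\sqrt{\mu_0(v)}|n\cdot v|)$ (since $c>\tfrac{1}{4T_0}$ for $\delta_0\ll T_0$), I obtain $|r_0|_\infty\lesssim\delta_0$ and $|\mathcal{K}f_s-P_\gamma f_s|_\infty\lesssim\delta_0\,|f_s|_\infty$, which sum to~\eqref{eqn: bound for r}.

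\textbf{Main obstacle.} The technical burden is the quantitative kernel comparison: one has to verify that the linear-in-$\delta_0$ dependence survives the unbounded weight $1/(\sqrt{\mu_0(v)}|n\cdot v|)$. Two features save the estimate: the $|n\cdot v|$ already present in~\eqref{eqn: Formula for R} kills the grazing singularity, and the Gaussian decay rate of $R$ in $v$ is $\approx 1/(2T_w)$, which strictly exceeds the $1/(4T_0)$ decay of $\sqrt{\mu_0(v)}$ whenever $T_w<2T_0$---an inequality guaranteed by~\eqref{eqn: small pert condition}. A subtle point is that although $(1-r_\perp)^{1/2}$ has only $O(\delta_0^{1/2})$ smallness, the evenness of $I_0$ and the $(1-r_\perp)$ that multiplies $|u_\perp|^2$ in the Gaussian exponent both recover the linear dependence on $\delta_0$. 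Once these points are in place the remaining estimates are bookkeeping of explicit Gaussians in $(T_w-T_0,1-r_\perp,1-r_\parallel)$.
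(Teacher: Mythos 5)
The student's structural decomposition ($r = r_0 + (\mathcal{K}f_s - P_\gamma f_s)$ with $r_0$ and $\mathcal{K}$ matching the paper's $r_1$ and $r_2$) is identical to the paper's. For the zero-mean condition~\eqref{eqn: int r is 0}, however, you take a cleaner, more conceptual route than the paper: instead of showing $\int_{\gamma_-}r_1\sqrt{\mu_0}=0$ and $\int_{\gamma_-}(r_2-P_\gamma f_s)\sqrt{\mu_0}=0$ separately via Lemma~\ref{Lemma: mu in bc} and a Tonelli swap, you invoke the null-flux identity~\eqref{eqn: Null flux condition} for $F_s$ together with the normalization of $P_\gamma$. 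Both derivations rest on the same normalization fact, but yours packages it so that the explicit Gaussian $\mu_{x,r_\parallel,r_\perp}$ is not needed for this part. That is a legitimate alternative.

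The bound~\eqref{eqn: bound for r}, though, contains a genuine gap. Your pivotal claim is a \emph{pointwise} estimate
$|R(u\to v;x)-R_{\mathrm{diff},T_0}(u\to v;x)|\lesssim\delta_0\,|n\cdot v|\,e^{-c|v|^2-c|u|^2}$
with a fixed $c>0$. This is false: fix $v$ and send $|u|\to\infty$. The Gaussian factors $e^{-(1-r_\perp)|u_\perp|^2/(2T_w r_\perp)}$ and $e^{-|v_\parallel-(1-r_\parallel)u_\parallel|^2/(2T_w r_\parallel(2-r_\parallel))}$ beat the Bessel growth, so $R(u\to v;x)\to 0$; but $R_{\mathrm{diff},T_0}$ does not depend on $u$ at all, so the left side tends to $R_{\mathrm{diff},T_0}(v)$, an $O(1)$ quantity (for $|v|\sim 1$), while the right side vanishes. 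The convergence $R\to R_{\mathrm{diff},T_0}$ as $\delta_0\to 0$ is pointwise in $u$ but \emph{not} uniform in $u$, and a first-order Taylor remainder only produces a bound of the schematic form $\delta_0(1+|u|^2)^k$ times Gaussians in $v$, with polynomial growth in $u$. Your argument is salvageable — the $\sqrt{\mu_0(u)}$ (resp.\ $\mu_0(u)$) weight already present in the $u$-integral absorbs that polynomial, so the integrated quantities $r_0$ and $\mathcal{K}f_s - P_\gamma f_s$ are indeed $O(\delta_0)$ in the weighted norm — but you should either replace the $e^{-c|u|^2}$ by a polynomial in $|u|$ or, more in the spirit of the paper's Lemma~\ref{Lemma: mu in bc}, sidestep pointwise comparison in $u$ altogether by computing the $u$-integral first: $\tfrac{1}{|n\cdot v|}\int_{n\cdot u>0}R\,\mu_0(u)(n\cdot u)\,du=\mu_{x,r_\parallel,r_\perp}$ is an explicit Gaussian whose effective temperatures are $O(\delta_0)$-perturbations of $T_0$, and the $O(\delta_0)$ closeness to $\mu_0$ in the weighted $L^\infty$ norm is then a clean one-variable estimate.
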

Before proving this lemma we need the following lemma for the C-L boundary condition.

\begin{lemma}\label{Lemma: mu in bc}
In regard to the boundary condition~\eqref{eqn: Formula for R}, we have
\begin{equation}\label{eqn: mu in the bc}
\frac{1}{|n(x)\cdot v|}\int_{n(x)\cdot u>0}R(u\to v;x,t)\mu_0 \{n(x)\cdot u\}du=\mu_{x,r_\parallel,r_\perp},
\end{equation}
where
\begin{equation}\label{eqn: mu_xr}
\begin{split}
   & \mu_{x,r_\parallel,r_\perp}=\frac{1}{2\pi [T_0(1-r_\parallel)^2+T_w(x)r_\parallel(2-r_\parallel)]}e^{-\frac{|v_\parallel|^2}{2[T_0(1-r_\parallel)^2+T_w(x)r_\parallel(2-r_\parallel)]}}.
 \\
    & \times \frac{1}{T_0(1-r_\perp)+T_w(x)r_\perp}e^{-\frac{|v_\perp|^2}{2[T_0(1-r_\perp)+T_w(x)r_\perp]}}.
\end{split}
\end{equation}
Moreover, for any $x\in \partial \Omega$ and $r_\parallel,r_\perp$, we have
\begin{equation}\label{eqn: int is 1}
  \int_{n(x)\cdot v>0}\mu_{x,r_\parallel,r_\perp}\{n(x)\cdot v\}dv=1.
\end{equation}

\end{lemma}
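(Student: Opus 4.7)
The plan is to prove~\eqref{eqn: mu in the bc} by direct evaluation, exploiting the fact that the C--L kernel in~\eqref{eqn: Formula for R} splits cleanly into a perpendicular factor (involving $I_0$) and a tangential factor (a shifted Gaussian in $u_\parallel$), while the Maxwellian $\mu_0$ itself factorizes as $\mu_0(u)=\frac{1}{2\pi T_0^2}e^{-|u_\perp|^2/(2T_0)}e^{-|u_\parallel|^2/(2T_0)}$. Writing $\{n(x)\cdot u\}\,du = u_\perp\,du_\perp\,du_\parallel$ on $\{u_\perp>0\}\times\mathbb{R}^2$, the integral on the LHS of~\eqref{eqn: mu in the bc} decouples into a $2$-dimensional Gaussian integral in $u_\parallel$ and a $1$-dimensional Weber--type Bessel integral in $u_\perp$, which I would handle separately.

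For the tangential piece, I would complete the square in the combined exponent
$\frac{|v_\parallel-(1-r_\parallel)u_\parallel|^2}{2T_w r_\parallel(2-r_\parallel)}+\frac{|u_\parallel|^2}{2T_0}$,
with $\alpha = \frac{(1-r_\parallel)^2}{T_w r_\parallel(2-r_\parallel)}+\frac{1}{T_0}$. A routine algebraic simplification (the same one sketched in the proof of Lemma~\ref{lemma: boundedness}, via~\eqref{eqn: coe abc}) shows that the Gaussian integration over $u_\parallel\in\mathbb{R}^2$ produces the factor $\frac{2\pi}{\alpha}$ times $\exp\bigl(-|v_\parallel|^2/[2(T_0(1-r_\parallel)^2+T_w r_\parallel(2-r_\parallel))]\bigr)$, which matches the tangential factor in~\eqref{eqn: mu_xr}.

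For the perpendicular piece, I would invoke the classical identity
$\int_0^\infty x\,e^{-px^2}I_0(cx)\,dx = \frac{1}{2p}\exp\!\bigl(\frac{c^2}{4p}\bigr)$
with $p = \frac{(1-r_\perp)T_0+T_w r_\perp}{2T_w r_\perp T_0}$ and $c = \frac{(1-r_\perp)^{1/2}v_\perp}{T_w r_\perp}$; this identity can be proved either by expanding $I_0$ as a power series and integrating term by term, or by recognizing it as the Laplace transform of the Rice density. Combining the resulting $e^{c^2/(4p)}$ with the $e^{-|v_\perp|^2/(2T_w r_\perp)}$ already present in $R$ and simplifying yields $\exp\bigl(-|v_\perp|^2/[2(T_0(1-r_\perp)+T_w r_\perp)]\bigr)$. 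Multiplying together all of the constant prefactors from $R$, $\mu_0$, and the two integrations, and cancelling the $|n(x)\cdot v|$ against $\frac{1}{|n(x)\cdot v|}$ on the LHS, one checks that everything collapses to the coefficient $\frac{1}{2\pi[T_0(1-r_\parallel)^2+T_w r_\parallel(2-r_\parallel)][T_0(1-r_\perp)+T_w r_\perp]}$ appearing in~\eqref{eqn: mu_xr}.

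Finally,~\eqref{eqn: int is 1} is immediate from the explicit form of $\mu_{x,r_\parallel,r_\perp}$: the tangential factor is a normalized $2$-D Gaussian of variance $T_0(1-r_\parallel)^2+T_w r_\parallel(2-r_\parallel)$ and integrates to $1$ over $v_\parallel\in\mathbb{R}^2$, while the perpendicular factor contributes $\int_0^\infty \frac{v_\perp}{\sigma^2}e^{-v_\perp^2/(2\sigma^2)}\,dv_\perp = 1$ with $\sigma^2 = T_0(1-r_\perp)+T_w r_\perp$. There is no real obstacle in this lemma: it is a calculation. The only delicate points are (i) the Weber--type Bessel identity used for the $u_\perp$--integral, and (ii) the careful bookkeeping of the five different temperature-and-accommodation constants in the prefactor, which is what makes the Gaussian product reduce exactly to the claimed form.
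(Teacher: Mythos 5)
Your proposal is correct and follows essentially the same route as the paper: factor the C--L kernel and $\mu_0$ into tangential and perpendicular pieces, evaluate the $u_\parallel$-integral by completing the square (the paper's Lemma~\ref{Lemma: abc}, eq.~\eqref{eqn: coe abc}), evaluate the $u_\perp$-integral by the Weber--type Bessel identity (the paper's Lemma~\ref{Lemma: perp abc}, eq.~\eqref{eqn: coe abc perp}, which it also proves by term-by-term power-series integration of $I_0$), and then observe the resulting product is a normalized $2$-D Gaussian times a Rayleigh-type density. The only cosmetic difference is that you cite the Bessel identity in the generic form $\int_0^\infty x\,e^{-px^2}I_0(cx)\,dx=\tfrac{1}{2p}e^{c^2/(4p)}$ rather than in the paper's parametrization with $a,b,\varepsilon,w$.
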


\begin{proof}
Using the definition of $R(u\to v;x,t)$ in~\eqref{eqn: Formula for R} we can write the LHS of~\eqref{eqn: mu in the bc} as
\begin{equation}\label{eqn: LHS of mu in bc}
\begin{split}
   &\int_{\mathbb{R}^+}\frac{|u_\perp|}{r_\perp T_w(x)}    \exp\Big(-\frac{1}{2T_w(x)}\big[\frac{|v_\perp|^2+(1-r_\perp)|u_\perp|^2}{r_\perp} \big] \Big)\times I_0\Big(\frac{(1-r_\perp)^{1/2}v_\perp u_\perp}{T_w(x)r_\perp} \Big)\frac{1}{T_0}\exp\Big(-\frac{|u_\perp|^2}{2T_0} \Big)dv_\perp \\
    & \times\int_{\mathbb{R}^2}\frac{1}{2T_w(x)r_\parallel(2-r_\parallel)\pi}\exp\Big(-\frac{1}{2T_w(x)}\frac{|v_\parallel-(1-r_\parallel)u_\parallel|^2}{r_\parallel(2-r_\parallel)} \Big)\frac{1}{2\pi T_0}\exp\Big(-\frac{|u_\parallel|^2}{2T_0} \Big)dv_\parallel.
\end{split}
\end{equation}
First we compute the second line of~\eqref{eqn: LHS of mu in bc}, in order to apply Lemma \ref{Lemma: abc}, we set
\[a=-\frac{1}{2T_0},\quad b=\frac{(1-r_\parallel)^2}{2T_w(x)r_\parallel(2-r_\parallel)},\quad v=u_\parallel,\quad w=\frac{1}{1-r_\parallel}v_\parallel,\quad \e=0,\]
\[b-a=\frac{(1-r_\parallel)^2}{2T_w(x)r_\parallel(2-r_\parallel)}+\frac{1}{2T_0}.\]
Then the second line of~\eqref{eqn: LHS of mu in bc} equals to
\[  \frac{1}{(1-r_\parallel)^2}\frac{b}{b-a}\exp\Big(\frac{ab}{b-a}|\frac{v_\parallel}{1-r_\parallel}|^2\Big)\]
\[=\frac{1}{2\pi}\frac{1}{T_0(1-r_\parallel)^2+T_w(x)r_\parallel(2-r_\parallel)}\exp\Big(-\frac{|v_\parallel|^2}{2\big[T_0(1-r_\parallel)^2+T_w(x)r_\parallel(2-r_\parallel)\big]} \Big) . \]
Then we compute the first line of~\eqref{eqn: LHS of mu in bc}, in order to apply Lemma \ref{Lemma: perp abc}, we set
\[a=-\frac{1}{2T_0},\quad b=\frac{1-r_\perp}{2T_w(x)r_\perp},\quad v=u_\perp,\quad w=\frac{1}{\sqrt{1-r_\perp}}v_\perp,\quad \e=0,\]
\[b-a=\frac{1-r_\perp}{2T_w(x)r_\perp}+\frac{1}{2T_0}.\]
Then the first line of~\eqref{eqn: LHS of mu in bc} is equal to
\[\frac{1}{1-r_\perp}\frac{b}{b-a}e^{\frac{ab}{b-a}|\frac{v_\perp}{\sqrt{1-r_\perp}}|^2}=\frac{1}{2\pi[T_0(1-r_\perp)+T_w(x)r_\perp]}\exp\Big( \frac{|v_\perp|^2}{2\pi[T_0(1-r_\perp)+T_w(x)r_\perp]}\Big).\]
Thus we conclude~\eqref{eqn: mu in the bc}.

Then we focus on~\eqref{eqn: int is 1}. The LHS of~\eqref{eqn: int is 1} can be written as
\begin{equation}\label{eqn: mymy}
\begin{split}
   & \int_{\mathbb{R}_+}\frac{v_\perp}{T_0(1-r_\perp)+T_w(x)r_\perp}e^{-\frac{|v_\perp|^2}{2[T_0(1-r_\perp)+T_w(x)r_\perp]}}       dv_\perp    \\
    &\times \int_{\mathbb{R}^2}\frac{1}{2\pi [T_0(1-r_\parallel)^2+T_w(x)r_\parallel(2-r_\parallel)]}e^{-\frac{|v_\parallel|^2}{2[T_0(1-r_\parallel)^2+T_w(x)r_\parallel(2-r_\parallel)]}}dv_\parallel.
\end{split}
\end{equation}
Clearly $\eqref{eqn: mymy}=1$.

\end{proof}

\begin{proof}[Proof of Lemma \ref{Prop: perturbation}]
By plugging the linearization $F_s=\mu_0+\sqrt{\mu_0}f_s$ into the boundary condition~\eqref{eqn:BC} and using Lemma \ref{Lemma: mu in bc} we obtain
\[\mu_0+\sqrt{\mu_0}f_s=\mu_{x,r_\parallel,r_\perp}+\frac{1}{|n(x)\cdot v|}\int_{n(x)\cdot u>0}R(u\to v;x,t)\sqrt{\mu_0(u)}f_s(u) \{n(x)\cdot u\}du.\]
Thus
\[f_s(v)=\underbrace{\frac{\mu_{x,r_\parallel,r_\perp}-\mu_0}{\sqrt{\mu_0}}}_{r_1}+      \underbrace{\frac{1}{\sqrt{\mu_0}}\frac{1}{|n(x)\cdot v|}\int_{n(x)\cdot u>0}R(u\to v;x,t)\sqrt{\mu_0(u)}f_s(u) \{n(x)\cdot u\}du}_{r_2(f_s)}.\]
We can rewrite the boundary condition into
\begin{equation}\label{eqn: bdr of f_s in r}
f_s(v)=r_1+r_2(f_s)-P_\gamma f_s+P_\gamma f_s.
\end{equation}
Clearly by~\eqref{eqn: int is 1} in Lemma \ref{Lemma: mu in bc} we have
\begin{equation}\label{eqn: r1 int is 0}
  \int_{\gamma_-}r_1\sqrt{\mu_0}=0.
\end{equation}
To prove the Lemma we just need to focus on $r_2(f_s)-P_\gamma f_s$. By Tonelli theorem, we have
\[\int_{\gamma_-} (r_2(f_s)-P_\gamma f_s)\sqrt{\mu_0}=\int_{n(x)\cdot v<0}\Big[ R(u\to v;x,t)-|n(x)\cdot v|\mu_0(v) \Big]dv\int_{n(x)\cdot u>0} \sqrt{\mu_0(u)}f_s(u)\{n(x)\cdot u\}du  \]
\[=[1-1]\times \int_{n(x)\cdot u>0} \sqrt{\mu_0(u)}f_s(u)\{n(x)\cdot u\}du  =0.\]
Thus we prove~\eqref{eqn: int r is 0}.

Then we focus on~\eqref{eqn: bound for r}. By the assumption in~\eqref{eqn: small pert condition} and $\zeta<\frac{1}{\theta(4+2\delta_0)}$, for $x\in\partial \Omega$ we have
\[|w_\rho(v)r|_\infty=|w_\rho(v)\frac{\mu_{x,r_\parallel,r_\perp}-\mu_0}{\sqrt{\mu_0}}|_\infty\lesssim \delta_0.\]
Then
\[|w_\rho(v)\big[r_2(f_s)-P_{\gamma}f_s\big]|\leq |f|_\infty    w_{\rho}(v)\frac{1}{\sqrt{\mu_0}}\int_{n(x)\cdot u>0} \Big[\frac{R(u\to v;x,t)}{|n(x)\cdot v|}-\mu_0(v)\Big]\sqrt{\mu_0(u)}f_s(u)\{n(x)\cdot u\}du.\]
\[\leq |f|_\infty    \Big|w_{\rho}(v)\frac{\mu_{x,r_\parallel,r_\perp}-\mu_0}{\sqrt{\mu_0}}\Big|_\infty\lesssim \delta_0 |f|_\infty,\]
where we apply Lemma \ref{Lemma: mu in bc} in the last line. Then we conclude the Lemma.

\end{proof}

\begin{proof}[Proof of Corollary \ref{Thm: steady solution}]

We consider the following iterative
sequence
\begin{equation}
v\cdot \nabla _{x}f^{\ell +1}+Lf^{\ell +1}=\Gamma (f^{\ell },f^{\ell }),
\label{nsteady}
\end{equation}%
with the boundary condition given in the form~\eqref{eqn: bdr of f_s in r}
\[f_{{-}}^{\ell +1}=P_{\gamma }f^{\ell +1}+r_1+r_2(f^{\ell})-P_\gamma f^{\ell}.\]
We set $f^{0}=0$.  By Lemma \ref{Prop: perturbation} we have
\begin{equation*}
\int_{\gamma _{-}}\sqrt{\mu_0 }\left\{ r_1+r_2(f^{\ell})-P_\gamma f^{\ell}\right\} d\gamma =0.
\end{equation*}%
Since $\int \Gamma(f^{\ell},f^\ell)\sqrt{\mu_0}=0$, we apply Proposition \ref{proposition: Linfty of steady solution} with~\eqref{eqn: bound for r} in Lemma \ref{Prop: perturbation} to get
\begin{equation*}
\Vert w_{\varrho }f^{\ell +1}\Vert _{\infty }+|w_{\varrho }f^{\ell
+1}|_{\infty }\lesssim \left\Vert \frac{w_{\varrho }\Gamma (f^{\ell
},f^{\ell })}{\langle v\rangle }\right\Vert _{\infty }+\delta_0 |w_{\varrho
}f^{\ell }|_{\infty ,{+}}+\delta_0 .
\end{equation*}%
Since $\left\Vert \frac{w_{\varrho }\Gamma (f^{\ell },f^{\ell })}{\langle
v\rangle }\right\Vert _{\infty }\lesssim \Vert w_{\varrho }f^{\ell }\Vert
_{\infty }^{2}$, we deduce
\begin{equation*}
\Vert w_{\varrho }f^{\ell +1}\Vert _{\infty }+|w_{\varrho }f^{\ell
+1}|_{\infty }\lesssim \Vert w_{\varrho }f^{\ell }\Vert _{\infty
}^{2}+\delta_0 |w_{\varrho }f^{\ell }|_{\infty ,+}+\delta_0 ,
\end{equation*}%
so that for $\delta_0 $ small, $\Vert w_{\varrho }f^{\ell +1}\Vert _{\infty
}+|w_{\varrho }f^{\ell +1}|_{\infty }\lesssim \delta_0 .$ Upon taking
differences, we have
\begin{eqnarray*}
&&[f^{\ell +1}-f^{\ell }]+v\cdot \nabla _{x}[f^{\ell +1}-f^{\ell
}]+L[f^{\ell +1}-f^{\ell }] =\Gamma (f^{\ell }-f^{\ell -1},f^{\ell })+\Gamma
(f^{\ell -1},f^{\ell }-f^{\ell -1}), \\
&&f_{{-}}^{\ell +1}-f_{{-}}^{\ell }=P_{\gamma }[f^{\ell +1}-f^{\ell }]+r_2(f^\ell)-P_\gamma f^\ell+P_\gamma f^{\ell-1}-r_2(f^{\ell-1}).
\end{eqnarray*}%
And by Proposition \ref{proposition: Linfty of steady solution} again for $f^{\ell +1}-f^{\ell },$
\begin{equation*}
\Vert w_{\varrho }[f^{\ell +1}-f^{\ell }]\Vert _{\infty }+|w_{\varrho
}[f^{\ell +1}-f^{\ell }]|_{\infty }\lesssim \delta_0 \big\{\Vert w_{\varrho
}[f^{\ell }-f^{\ell -1}]\Vert _{\infty }+|w_{\varrho
}[f^{\ell}-f^{\ell-1}]|_{\infty }\big\}.
\end{equation*}%
Hence $f^{\ell }$ is Cauchy in $L^{\infty }$ and we construct our solution
by taking the limit $f^{\ell }\rightarrow f_{s}$. Uniqueness follows in the
standard way.

\end{proof}

Then we focus on the dynamical stability, which is the Corollary \ref{Thm: dynamical stability}. We need this Proposition.
\begin{proposition}[Proposition 7.1 from \cite{EGKM}]\label{Prop: Linfty bound for dynamical}
\label{dlinearlinfty} Let $\|w_\rho f_{0}\|_{\infty }+|\langle v\rangle
w_\rho r|_{\infty }+\|w_\rho g\|_{\infty }<+\infty $ and $\iint \sqrt{\mu_0 }%
g=\int_{\gamma }r\sqrt{\mu_0 } = \iint f_0 \sqrt{\mu_0} =0$. Then the solution $%
f $
\begin{equation}\label{eqn: linearzied dynamical}
\partial_t f+v\cdot \nabla_x f+Lf=g,\quad f(0)=f_0,\quad \text{ in } \Omega\times \mathbb{R}^3\times \mathbb{R}_+
\end{equation}
satisfies
\begin{equation*}
\|w_\rho f(t)\|_{\infty }+|w_\rho f(t)|_{\infty }\leq e^{-\lambda t}\big\{%
\|w_\rho f_{0}\|_{\infty }+\sup e^{\lambda s}\|w_\rho g\|_{\infty
}+\int_{0}^{t}e^{\lambda s}|\langle v\rangle w_\rho r(s)|_{\infty }ds\big\}.
\end{equation*}%

\end{proposition}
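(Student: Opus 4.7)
The plan is to prove Proposition \ref{Prop: Linfty bound for dynamical} via Guo's $L^{2}$--$L^{\infty}$ bootstrap, adapted as in \cite{EGKM} to the diffuse--type reflection boundary $f|_{-}=P_{\gamma }f+r$. The structure of the boundary condition here is exactly the one handled in EGKM (a projection $P_\gamma$ onto the Maxwellian mode plus a controlled source $r$ with $\int_{\gamma_-}r\sqrt{\mu_0}=0$), so the EGKM argument transfers essentially verbatim; I sketch the three main steps.

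\textbf{Step 1 ($L^{2}$ dissipation).} Test the equation $\partial_t f+v\cdot\nabla_x f+Lf=g$ against $f$ and integrate by parts to get
\begin{equation*}
\frac{1}{2}\frac{d}{dt}\|f\|_{2}^{2}+\langle Lf,f\rangle +\frac{1}{2}\int_{\partial\Omega}\int_{\mathbb{R}^{3}}f^{2}(n\cdot v)\,dv\,dS_{x}=\langle g,f\rangle.
\end{equation*}
Insert $f|_{-}=P_\gamma f + r$ into the boundary integral and expand the square. Since $c_\mu\mu_0|n\cdot u|\,du$ is a probability measure on $\gamma_+$, $P_\gamma$ is a projection on that weighted $L^2$, yielding
\begin{equation*}
\int_{\gamma_+}f^2(n\cdot v)\,dv-\int_{\gamma_-}(P_\gamma f+r)^2|n\cdot v|\,dv\;\geq\; c\,|(I-P_\gamma)f|_{L^2(\gamma_+)}^{2}-C|r|_{L^2(\gamma_-)}^{2}.
\end{equation*}
Combined with the standard coercivity $\langle Lf,f\rangle\geq c\|(I-\mathbf{P})f\|_\nu^{2}$ of the collision operator, where $\mathbf{P}$ is the macroscopic projection onto $\mathrm{span}\{\sqrt{\mu_0},v\sqrt{\mu_0},|v|^{2}\sqrt{\mu_0}\}$, this furnishes dissipation of both the microscopic part $(I-\mathbf{P})f$ and the boundary non-equilibrium $(I-P_\gamma)f|_{\gamma_+}$.

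\textbf{Step 2 (Decay of the hydrodynamic part).} Write $\mathbf{P}f=\{a(t,x)+b(t,x)\cdot v+c(t,x)|v|^2\}\sqrt{\mu_0}$. Following EGKM, construct test functions solving Stokes-type elliptic systems whose boundary data are dictated by $P_\gamma$; testing the equation against these functions recovers $\|(a,b,c)\|_2$ from $\|(I-\mathbf{P})f\|_\nu$, $|(I-P_\gamma)f|_{L^2(\gamma_+)}$, and $|r|_{L^2(\gamma_-)}$. The mass constraint $\iint f\sqrt{\mu_0}\,dxdv=0$ kills the zero mode of $a$. A Gronwall argument then produces exponential $L^{2}$ decay
\begin{equation*}
\|f(t)\|_{2}\lesssim e^{-\lambda_{0}t}\Bigl\{\|f_0\|_{2}+\sup_{0\leq s\leq t}e^{\lambda_{0}s}\|g(s)\|_{2}+\int_{0}^{t}e^{\lambda_{0}s}|r(s)|_{L^{2}(\gamma_-)}\,ds\Bigr\}.
\end{equation*}

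\textbf{Step 3 ($L^{\infty}$ bootstrap through trajectories).} Split $Lf=\nu(v)f-Kf$ and iterate the Duhamel representation along characteristics through many boundary reflections---structurally the same unfolding as in Lemma \ref{lemma: the tracjectory formula for f^(m+1)}, but now applied to the linear equation with source $g$ and boundary perturbation $r$. This writes $w_\varrho f(t,x,v)$ as (i) initial data propagated by the loss semigroup $e^{-\nu(v)t}$, (ii) Duhamel contributions from $g$ and $r$ integrated along the broken trajectory, and (iii) a remainder involving $k$-fold iterated actions of $K$. The crucial analytic input is that $K$ has a kernel with integrable decay (and with a velocity-weight gain), so after enough iterations the remainder can be dominated in $L^{2}_{x,v}$ on a bounded velocity slab and then converted back to $L^{\infty}$ via Cauchy--Schwarz. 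Inserting the $L^{2}$ decay of Step~2 produces the desired $e^{-\lambda t}$ factor with $\lambda<\lambda_{0}$ absorbing the loss incurred in the conversion. The boundary-trace bound $|w_\varrho f(t)|_{\infty}$ is handled in parallel, with $|\langle v\rangle w_\varrho r|_{\infty}$ controlling the boundary source term that appears at each bounce.

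The main obstacle is Step~2: deriving coercivity for the macroscopic part $\mathbf{P}f$ under the mixed boundary condition $f|_{-}=P_\gamma f+r$. Constructing test functions whose elliptic boundary data are compatible with $P_\gamma$, and tracking precisely how $r$ enters the elliptic and trace estimates, is delicate. Once the macroscopic decay in Step~2 is established, Steps~1 and 3 are essentially mechanical given the $L^{2}$--$L^{\infty}$ framework of \cite{G} and the trajectory iteration already developed in Section~2 of this paper.
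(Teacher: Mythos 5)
The paper does not prove this proposition; it is quoted verbatim as Proposition 7.1 from \cite{EGKM} and used as a black box in the proof of Corollary \ref{Thm: dynamical stability}. There is therefore no internal proof to compare against. That said, your sketch is a faithful reconstruction of the $L^{2}$--$L^{\infty}$ bootstrap that \cite{EGKM} (following \cite{G}) actually carries out: Step~1 exploits that $P_\gamma$ is an orthogonal projection in the weighted boundary $L^2$, Step~2 is the elliptic/test-function estimate for the macroscopic part $\mathbf{P}f$, and Step~3 is the Duhamel/trajectory iteration with $K$-smoothing and Cauchy--Schwarz conversion back to $L^\infty$. You also correctly flag Step~2 (macroscopic coercivity under $f|_{-}=P_\gamma f+r$) as the genuinely delicate part. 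Two minor cautions, in case you wish to flesh this out: in Step~1 the inequality you wrote should be obtained after applying Young's inequality to the cross term $2\int_{\gamma_-}(P_\gamma f)\,r\,|n\cdot v|\,dv$, and one should record that $|P_\gamma f|_{L^2(\gamma_-)}=|P_\gamma f|_{L^2(\gamma_+)}$ so the $P_\gamma f$ contributions on the two sides cancel; in Step~3 the exponential rate $\lambda$ has to be taken strictly below the $L^2$ rate $\lambda_0$ to absorb polynomial-in-$t$ losses from the finitely many $K$-iterations and boundary bounces. Since the proposition is only invoked here, your high-level outline is consistent with the paper's usage.
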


\begin{proof}[Proof of Corollary \ref{Thm: dynamical stability}]
With the stationary solution for~\eqref{eqn: Steady Boltzmann} given in Corollary \ref{Thm: steady solution}, we set the solution to~\eqref{eqn: VPB equation} as
\[F=F_s+\sqrt{\mu_0}f,\quad F_s=\sqrt{\mu_0}f+f_s.\]
Then the equation for $f$ reads
\[\partial_t f+v\cdot \nabla_x f+Lf=L_{\sqrt{\mu_0}f_s}f+\Gamma(f,f),\]
where
\[L_{\sqrt{\mu_0}f_s}f=[Q(\sqrt{\mu_0}f_s,\sqrt{\mu_0}f)+Q(\sqrt{\mu_0}f,\sqrt{\mu_0}f_s)]/\sqrt{\mu_0}.\]

We consider the following iteration sequence
\begin{equation*}
\partial _{t}f^{\ell +1}+v\cdot \nabla _{x}f^{\ell +1}+Lf^{\ell +1}=L_{\sqrt{%
\mu_0 }f_{s}}f^{\ell }+\Gamma (f^{\ell },f^{\ell }),
\end{equation*}%
with
\[f_{{-}}^{\ell +1}=P_{\gamma }f^{\ell +1}+r_1+r_2(f^{\ell})-P_\gamma f^{\ell}.\]
Clearly $\iint \{L_{\sqrt{\mu_0 }f_{s}}f^{\ell }+\Gamma (f^{\ell },f^{\ell })\}%
\sqrt{\mu_0 }=0.$ Recall $w_{\varrho
}(v)=(1+\varrho ^{2}|v|^{2})^{\frac{\beta }{2}}e^{\zeta |v|^{2}}$ in (\ref%
{weight}). Note that for $0\leq \zeta <\frac{1}{4},$
\begin{equation*}
\left\Vert e^{\frac{\lambda s}{2}}w_{\varrho }\left\{ \frac{1}{\langle
v\rangle }[L_{\sqrt{\mu_0 }f_{s}}f^{\ell }+\Gamma (f^{\ell },f^{\ell
})(s)\right\} \right\Vert _{\infty }\lesssim  \delta_0 \sup_{0\leq s\leq t}\Vert e^{\frac{\lambda s}{2}}w_{\varrho }f^{\ell
}(s)\Vert _{\infty }+\left\{ \sup_{0\leq s\leq t}\Vert e^{\frac{\lambda s}{2}%
}w_{\varrho }f^{\ell }(s)\Vert _{\infty }\right\} ^{2}.
\end{equation*}%
By Proposition \ref{dlinearlinfty} and Lemma \ref{Prop: perturbation}, we deduce
\begin{eqnarray*}
&&\sup_{0\leq s\leq t}\Vert e^{\frac{\lambda s}{2}}w_{\varrho }f^{\ell
+1}(s)\Vert _{\infty }+\sup_{0\leq s\leq t}|e^{\frac{\lambda s}{2}%
}w_{\varrho }f^{\ell +1}(s)|_{\infty } \\
&\lesssim &\Vert w_{\varrho }f_{0}\Vert _{\infty }+\delta_0 \sup_{0\leq s\leq
t}\Vert e^{\frac{\lambda s}{2}}w_{\varrho }f^{\ell }(s)\Vert _{\infty
}\\
&& \ \ \ \   +\delta_0 \sup_{0\leq s\leq t}|e^{\frac{\lambda s}{2}}w_{\varrho }f^{\ell
}(s)|_{\infty }+\left\{ \sup_{0\leq s\leq t}\Vert e^{\frac{\lambda s}{2}%
}w_{\varrho }f^{\ell }(s)\Vert _{\infty }\right\} ^{2}.
\end{eqnarray*}%
For $\delta_0 $ small, there exists a $\varepsilon _{0}$ (uniform in $\delta_0 $%
) such that, if the initial data satisfy (\ref{epsilon0}), then
\begin{equation*}
\sup_{0\leq s\leq t}\Vert e^{\frac{\lambda s}{2}}w_{\varrho }f^{\ell
+1}(s)\Vert _{\infty }+\sup_{0\leq s\leq t}|e^{\frac{\lambda s}{2}%
}w_{\varrho }f^{\ell +1}(s)|_{\infty }\lesssim \Vert w_{\varrho }f_{0}\Vert
_{\infty }.
\end{equation*}%
By taking difference $f^{\ell +1}-f^{\ell }$, we deduce that%
\begin{eqnarray*}
&&\partial _{t}[f^{\ell +1}-f^{\ell }]+v\cdot \nabla _{x}[f^{\ell
+1}-f^{\ell }]+L[f^{\ell +1}-f^{\ell }] \\
&&\ \ \ \ =L_{\sqrt{\mu_0 }f_{s}}[f^{\ell }-f^{\ell -1}]+\Gamma (f^{\ell
}-f^{\ell -1},f^{\ell })+\Gamma (f^{\ell -1},f^{\ell }-f^{\ell -1}), \\
&&[f^{\ell +1}-f^{\ell }]_{-}=P_{\gamma }[f^{\ell +1}-f^{\ell }]+\frac{\mu_{x,r_\parallel,r_\perp}-\mu_0 }{\sqrt{\mu_0 }}\int_{\gamma _{+}}[f^{\ell }-f^{\ell
-1}](n(x)\cdot v)dv,
\end{eqnarray*}%
with $f^{\ell +1}-f^{\ell }=0$ initially. Repeating the same argument, we
obtain
\begin{eqnarray*}
 &&\sup_{0\leq s\leq t}\Vert e^{\frac{\lambda s}{2}}w_{\varrho }[f^{\ell
+1}-f^{\ell }](s)\Vert _{\infty }+\sup_{0\leq s\leq t}|e^{\frac{\lambda s}{2}%
}w_{\varrho }[f^{\ell +1}-f^{\ell }](s)|_{\infty } \\
& & \ \ \ \ \ \ \ \lesssim[\delta_0 +\sup_{0\leq s\leq t}\Vert e^{\frac{\lambda s}{2}%
}w_{\varrho }f^{\ell }(s)\Vert _{\infty } \\
&& \ \ \ \ \ \ \ \ \ \ \ \ +\sup_{0\leq s\leq t}\Vert e^{\frac{\lambda s}{2}}w_{\varrho }f^{\ell
-1}(s)\Vert _{\infty }]\sup_{0\leq s\leq t}\Vert e^{\frac{\lambda s}{2}%
}w_{\varrho }[f^{\ell }-f^{\ell -1}](s)\Vert _{\infty }.
\end{eqnarray*}%
This implies that $f^{\ell +1}$ is a Cauchy sequence. The uniqueness is
standard.

To conclude the positivity, we use another sequence in~\cite{EGKM},
\begin{equation*}
\partial_t F^{\ell+1}+v\cdot \nabla_x F^{\ell+1}+\nu(F^{\ell})F^{\ell+1}=Q_{\text{gain}}(F^{\ell},F^{\ell}).
\end{equation*}
We pose $F^\ell=F_s+\sqrt{\mu_0}f^\ell$, then the equation for $f^{\ell}$ reads
\[\partial_t f^{\ell+1}+v\cdot \nabla_x f^{\ell+1}+\nu(v)f^{\ell+1}-K f^{\ell}\]
\[=\Gamma_{\text{gain}}(f^{\ell},f^{\ell})-\nu(\sqrt{\mu_0}f^{\ell})f^{\ell+1}-\nu(\sqrt{\mu_0}f_s)f^{\ell+1}-\nu(\sqrt{\mu_0}f^{\ell})f_s\]
\[+\frac{1}{\sqrt{\mu_0}}\Big\{Q_{\text{gain}}(\sqrt{\mu_0}f^\ell,\sqrt{\mu}f_s)+Q_{\text{gain}}(\sqrt{\mu_0}f_s,\sqrt{\mu_0}f^\ell)  \Big\}.\]
It is shown in~\cite{EGKM} that $f^{\ell}$ is a Cauchy sequence. Thus by the uniqueness of the solution we conclude the positivity of $F$ and $F_s$ by positive preserving property of this sequence solution.

\end{proof}

\section{Appendix}
\begin{lemma}\label{Lemma: Prob measure}
For $R(u\to v;x,t)$ given by~\eqref{eqn: Formula for R} and any $u$ such that $u\cdot n(x)>0$, we have
\begin{equation}\label{eqn: integrate 1}
  \int_{n(x)\cdot v<0}R(u\to v;x,t)dv=1.
\end{equation}

\end{lemma}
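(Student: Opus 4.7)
The plan is to compute the integral by decomposing $v = v_\parallel + v_\perp n(x)$ into its tangential and normal components. Since the condition $n(x)\cdot v < 0$ is exactly $v_\perp < 0$, Fubini allows us to write
\[
\int_{n(x)\cdot v<0} R(u\to v;x,t)\,dv = \mathcal{I}_\parallel(u_\parallel) \cdot \mathcal{I}_\perp(u_\perp),
\]
where $\mathcal{I}_\parallel$ is a 2D Gaussian integral in $v_\parallel \in \mathbb{R}^2$ and $\mathcal{I}_\perp$ is a 1D integral in $v_\perp \in (-\infty,0)$ carrying the Bessel factor. I would check $\mathcal{I}_\parallel = 1$ directly: after the shift $v_\parallel \mapsto v_\parallel + (1-r_\parallel)u_\parallel$ the integrand becomes the centered 2D Gaussian with variance $T_w(x) r_\parallel(2-r_\parallel)$, and the prefactor $\tfrac{1}{\pi r_\parallel(2-r_\parallel)(2T_w(x))}$ is exactly its normalization.

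For the normal piece, after the change of variables $w = -v_\perp > 0$ (and noting that $I_0$ is even, so the sign of $v_\perp u_\perp$ does not matter), $\mathcal{I}_\perp$ takes the form
\[
\mathcal{I}_\perp = \int_0^\infty \frac{w}{\sigma^2}\,\exp\!\left(-\frac{w^2+s^2}{2\sigma^2}\right) I_0\!\left(\frac{ws}{\sigma^2}\right) dw
\]
with $\sigma^2 = T_w(x) r_\perp$ and $s = \sqrt{1-r_\perp}\,u_\perp$. This is the total mass of the Rice distribution, which equals $1$. I would verify this by expanding the Bessel function as the absolutely convergent series $I_0(y) = \sum_{k\geq 0} (y/2)^{2k}/(k!)^2$, interchanging sum and integral, and evaluating each term via the substitution $t = w^2/2\sigma^2$:
\[
\int_0^\infty \frac{w^{2k+1}}{\sigma^2}\,e^{-w^2/2\sigma^2} dw = (2\sigma^2)^k \,k!.
\]
Collecting the resulting series reassembles $e^{s^2/2\sigma^2}$, cancelling the Gaussian prefactor $e^{-s^2/2\sigma^2}$ and giving $\mathcal{I}_\perp = 1$.

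There is no serious obstacle; the only slightly delicate point is the Rice-distribution computation, and that is handled by the series expansion above (alternatively one could cite it as a standard fact, since this appears in the original derivation of the Cercignani–Lampis kernel). Combining $\mathcal{I}_\parallel = \mathcal{I}_\perp = 1$ yields~\eqref{eqn: integrate 1}, which is the normalization property~\eqref{eqn: normalization} used throughout the paper.
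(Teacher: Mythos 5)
Your proof is correct and follows essentially the same route as the paper's: decompose into tangential and normal pieces, integrate the Gaussian in $v_\parallel$ directly, and for $v_\perp$ expand $I_0$ in its power series, interchange sum and integral, evaluate the Gaussian moments termwise, and reassemble the exponential. The only cosmetic differences are that you keep $T_w(x)$ general (the paper normalizes $T_w=1$), quote the standard series for $I_0$ rather than deriving it from the integral definition, and name the resulting normal integrand as the Rice density, which is a nice observation but does not change the computation.
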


\begin{proof}

We transform the basis from $\{\tau_1,\tau_2,n\}$ to the standard bases $\{e_1,e_2,e_3\}$. For simplicity, we assume $T_w(x)=1$. The integration over $\mathcal{V}_\parallel$( defined in~\eqref{eqn: Define space} ), after the orthonormal transformation, becomes integration over $\mathbb{R}^2$. We have
\[\int_{\mathbb{R}^2} \frac{1}{r_\parallel(2-r_\parallel)}  \exp\Big(\frac{|v_\parallel-(1-r_\parallel)u_\parallel|^2}{r_\parallel(2-r_\parallel)} \Big)dv_\parallel,\]
which is obviously normalized.

Then we consider the integration over $\mathcal{V}_\perp$, which is $e_3<0$ after the transformation. We want to show

\begin{equation}\label{eqn: I0}
\frac{2}{r_\perp}\int_{-\infty}^0 -v_\perp e^{-\frac{|v_\perp|^2}{r_\perp}}e^{\frac{-(1-r_\perp)|u_\perp|^2}{r_\perp}}I_0\Big(\frac{2(1-r_\perp)^{1/2}v_\perp u_\perp}{r_\perp}\Big)dv_\perp=1.
\end{equation}
The Bessel function reads
\[J_0(y)=\frac{1}{\pi}\int_0^{\pi} e^{iy\cos\theta}d\theta=\sum_{k=0}^\infty \frac{1}{\pi}\int_0^\pi \frac{(iy\cos\theta)^k}{k!} d\theta=\sum_{k=0}^\infty \int_0^\pi  \frac{(iy\cos\theta)^{2k}}{(2k)!}  d\theta\]
\[\sum_{k=0}^\infty   \int_0^\pi  \frac{(-1)^k (y)^{2k} (\cos\theta)^{2k}}{(2k)!}d\theta=\sum_{k=0}^\infty (-1)^k \frac{(\frac{1}{4}y^2)^k}{(k!)^2},\]
where we use the Fubini's theorem and the fact that
\[\int_0^\pi \cos^{2k}\theta=\frac{\pi}{2^{2k}}\left(
                                                 \begin{array}{c}
                                                   2k \\
                                                   k \\
                                                 \end{array}
                                               \right).
\]
Hence
\begin{equation}\label{eqn: I0 sequence}
I_0(y)=\frac{1}{\pi}\int_0^\pi e^{i(-iy)\cos \theta}d\theta =J_0(-iy)=\sum_{k=0}^\infty \frac{(\frac{1}{4}y^2)^k}{(k!)^2},\quad I_0(y)=I_0(-y).
\end{equation}
By taking the change of variable $v_\perp\to -v_\perp$, the LHS of \eqref{eqn: I0} can be written as
\[\frac{2}{r_\perp}\int_{0}^\infty v_\perp e^{-\frac{|v_\perp|^2}{r_\perp}}e^{\frac{-(1-r_\perp)|u_\perp|^2}{r_\perp}}I_0\Big(\frac{2(1-r_\perp)^{1/2}v_\perp u_\perp}{r_\perp}\Big)dv_\perp.\]
Using~\eqref{eqn: I0 sequence} we rewrite the above term as
\begin{equation}\label{eqn: I_0 tough}
\sum_{k=0}^\infty \frac{2}{r_\perp}\int_0^\infty v_\perp e^{\frac{-|v_\perp|^2}{r_\perp}} e^{\frac{-(1-r_\perp)|u_\perp|^2}{r_\perp}}\frac{(1-r_\perp)^k v_\perp^{2k}u_\perp^{2k}}{(k!)^2r_\perp^{2k}}dv,
\end{equation}
where we use the Tonelli theorem. By rescaling $v_\perp=\sqrt{r_\perp}v_\perp$ we have
\[\frac{2}{r_\perp}\int_0^\infty v_\perp e^{\frac{-|v_\perp|^2}{r_\perp}} e^{\frac{-(1-r_\perp)|u_\perp|^2}{r_\perp}}\frac{(1-r_\perp)^k v_\perp^{2k}u_\perp^{2k}}{(k!)^2r_\perp^{2k}}dv\]
\[=2\int_0^\infty v_\perp e^{-|v_\perp|^2} e^{\frac{-(1-r_\perp)|u_\perp|^2}{r_\perp}}\frac{(1-r_\perp)^k v_\perp^{2k}u_\perp^{2k}}{(k!)^2r_\perp^{k}}dv\]
\begin{equation}\label{eqn: appendix for I0}
=2\int_0^\infty v_\perp^{2k+1}e^{-|v_\perp|^2}dv    e^{\frac{-(1-r_\perp)|u_\perp|^2}{r_\perp}}\frac{(1-r_\perp)^k u_\perp^{2k}}{(k!)^2r_\perp^{k}}
\end{equation}
\[=2\frac{k!}{2}e^{\frac{-(1-r_\perp)|u_\perp|^2}{r_\perp}}\frac{(1-r_\perp)^k u_\perp^{2k}}{(k!)^2r_\perp^{k}}=e^{\frac{-(1-r_\perp)|u_\perp|^2}{r_\perp}}\frac{(1-r_\perp)^k u_\perp^{2k}}{k!r_\perp^{k}}.\]
Therefore, the LHS of~\eqref{eqn: I0} can be written as
\[e^{\frac{-(1-r_\perp)|u_\perp|^2}{r_\perp}}\sum_{k=0}^\infty \frac{(1-r_\perp)^k u_\perp^{2k}}{k!r_\perp^{k}} =e^{\frac{-(1-r_\perp)|u_\perp|^2}{r_\perp}}e^{\frac{(1-r_\perp)|u_\perp|^2}{r_\perp}}=1.\]

\end{proof}

\begin{lemma}\label{Lemma: abc}
For any $a>0,b>0,\e>0$ with $a+\e<b$,
\begin{equation}\label{eqn: coe abc}
\frac{b}{\pi}\int_{\mathbb{R}^2} e^{\e|v|^2}  e^{a|v|^2}e^{-b|v-w|^2}dv=\frac{b}{b-a-\e}e^{\frac{(a+\e)b}{b-a-\e}|w|^2}.
\end{equation}
And when $\delta\ll 1$,
\begin{eqnarray}
     \frac{b}{\pi}\int_{|v-\frac{b}{b-a-\e}w|>\delta^{-1}} e^{\e|v|^2}  e^{a|v|^2}e^{-b|v-w|^2}dv  &\leq  &e^{-(b-a-\e)\delta^{-2}} \frac{b}{b-a-\e} e^{\frac{(a+\e)b}{b-a-\e}|w|^2} \label{eqn: coe abc smaller} \\
   & \leq & \delta \frac{b}{b-a-\e}e^{\frac{(a+\e)b}{b-a-\e}|w|^2} \label{eqn: coe abc small}.
\end{eqnarray}

\end{lemma}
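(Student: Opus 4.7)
The plan is to reduce everything to completing the square for the Gaussian exponent and then applying polar coordinates; no new machinery is needed. Write $c := b - a - \varepsilon > 0$. The combined exponent in the integrand is
\[
\varepsilon|v|^2 + a|v|^2 - b|v-w|^2 \;=\; -c\,|v|^2 + 2b\, v\cdot w - b|w|^2.
\]
Completing the square around $\tfrac{b}{c}w$ yields
\[
-c\,|v|^2 + 2b\, v\cdot w - b|w|^2 \;=\; -c\Big|v - \tfrac{b}{c}w\Big|^2 + \Big(\tfrac{b^2}{c}-b\Big)|w|^2 \;=\; -c\Big|v - \tfrac{b}{c}w\Big|^2 + \tfrac{(a+\varepsilon)b}{c}|w|^2,
\]
so after the translation $u = v - \tfrac{b}{c}w$ the integral factorizes into a constant (in $u$) times a pure two-dimensional Gaussian $\int_{\mathbb{R}^2} e^{-c|u|^2}\,du = \pi/c$, which gives \eqref{eqn: coe abc}.

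For \eqref{eqn: coe abc smaller} I would use the same change of variables: the restriction $|v - \tfrac{b}{b-a-\varepsilon}w|>\delta^{-1}$ is exactly $|u|>\delta^{-1}$, and in polar coordinates
\[
\int_{|u|>\delta^{-1}} e^{-c|u|^2}\,du \;=\; 2\pi\!\int_{\delta^{-1}}^{\infty} r\,e^{-cr^2}\,dr \;=\; \tfrac{\pi}{c}\,e^{-c\delta^{-2}}.
\]
Multiplying by $\tfrac{b}{\pi}\,e^{(a+\varepsilon)b|w|^2/c}$ produces exactly the right-hand side of \eqref{eqn: coe abc smaller} (in fact with equality). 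The second inequality \eqref{eqn: coe abc small} then follows from the elementary observation that the super-exponential $e^{-c\delta^{-2}}$ is dominated by $\delta$ once $\delta$ is small enough relative to $c = b-a-\varepsilon$; quantitatively, it suffices that $\delta \le e^{-1/(c\delta^2)}\cdot \delta$ fails to hold, i.e. for any $\delta$ with $c\delta^{-2} \ge \log(1/\delta)$, which is automatic for $0<\delta \ll 1$.

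There is no real obstacle here: the only mild point of care is that the Gaussian tail estimate in two dimensions is sharp (an exact identity via polar coordinates), so the stated constant $\frac{b}{b-a-\varepsilon}$ and exponent $-(b-a-\varepsilon)\delta^{-2}$ come out of the computation on the nose, with no slack needing to be tracked. The hypotheses $b>0$ and $a+\varepsilon<b$ are used only to ensure $c>0$, which makes the Gaussian convergent and the completion of the square legitimate.
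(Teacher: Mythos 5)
Your proof is correct and follows essentially the same route as the paper: complete the square in the Gaussian exponent, translate by $\tfrac{b}{b-a-\varepsilon}w$, and evaluate (or bound) the resulting two-dimensional Gaussian tail. The only minor difference is that you observe, via polar coordinates, that \eqref{eqn: coe abc smaller} actually holds with equality, whereas the paper simply records it as an inequality; this does not change the argument in any essential way.
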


\begin{proof}
\[\frac{b}{\pi}\int_{\mathbb{R}^2} e^{\e|v|^2}  e^{a|v|^2}e^{-b|v-w|^2}dv = \frac{b}{\pi}\int_{\mathbb{R}^2} e^{(a+\e-b)|v|^2} e^{2bv\cdot w} e^{-b|w|^2}dv   \]
\[=\frac{b}{\pi}\int_{\mathbb{R}^2}   e^{(a+\e-b)|v+\frac{b}{a+\e-b}w|^2} e^{\frac{-b^2}{a+\e-b}|w|^2} e^{-b|w|^2}dv   \]
\[=\frac{b}{\pi}\int_{\mathbb{R}^2} e^{(a+\e-b)|v|^2}dv e^{\frac{(a+\e)b}{b-a-\e}|w|^2}=\frac{b}{b-a-\e}e^{\frac{(a+\e)b}{b-a-\e}|w|^2},\]
where we apply change of variable $v+\frac{b}{a+\e-b}w\to v$ in the first step of the last line, then we obtain~\eqref{eqn: coe abc}.

Following the same derivation
\[\frac{b}{\pi}\int_{|v-\frac{b}{b-a-\e}w|>\delta^{-1}} e^{\e|v|^2}  e^{a|v|^2}e^{-b|v-w|^2}dv=\frac{b}{\pi}\int_{|v-\frac{b}{b-a-\e}w|>\delta^{-1}} e^{(a+\e-b)|v-\frac{b}{b-a-\e}w|^2}dv e^{\frac{(a+\e)b}{b-a-\e}|w|^2}\]
\[\leq  e^{-(b-a-\e)\delta^{-2}} \frac{b}{b-a-\e} e^{\frac{(a+\e)b}{b-a-\e}|w|^2} \leq \delta     \frac{b}{b-a-\e} e^{\frac{(a+\e)b}{b-a-\e}|w|^2},\]
thus we obtain~\eqref{eqn: coe abc small}.

\end{proof}

\begin{lemma}\label{Lemma: perp abc}
For any $a>0,b>0,\e>0$ with $a+\e<b$,
\begin{equation}\label{eqn: coe abc perp}
2b\int_{\mathbb{R}^+}v e^{\e v^2}e^{av^2} e^{-bv^2}e^{-bw^2}I_0(2bv w)dv=\frac{b}{b-a-\e}e^{\frac{(a+\e)b}{b-a-\e}w^2}.
\end{equation}
And when $\delta\ll 1$,
\begin{equation}\label{eqn: coe abc perp small}
2b\int_{0< v<\delta}v e^{\e v^2}e^{av^2} e^{-bv^2}e^{-bw^2}I_0(2bv w)dv\leq \delta\frac{b}{b-a-\e}e^{\frac{(a+\e)b}{b-a-\e}w^2}.
\end{equation}

\end{lemma}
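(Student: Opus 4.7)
The plan is to reduce both identities to the 2D Gaussian computation already carried out in Lemma \ref{Lemma: abc} by passing through polar coordinates. Writing $v=(v_1,v_2)\in\mathbb{R}^2$, identifying the scalar $w$ with the vector $we_1$, and switching to polar coordinates $v_1=r\cos\theta$, $v_2=r\sin\theta$, a direct computation yields
\[
\int_{\mathbb{R}^2} e^{(a+\e)|v|^2-b|v-we_1|^2}\,dv
\;=\;2\pi\int_0^\infty r\,e^{(a+\e-b)r^2-bw^2}\,I_0(2brw)\,dr,
\]
where the factor $2\pi I_0(2brw)=\int_0^{2\pi}e^{2brw\cos\theta}\,d\theta$ comes from the paper's definition of $I_0$ together with the symmetry of $\cos\theta$ about $\theta=\pi$.

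For \eqref{eqn: coe abc perp}, I would apply Lemma \ref{Lemma: abc} directly to the 2D integral on the left, which produces $\frac{\pi}{b-a-\e}e^{\frac{(a+\e)b}{b-a-\e}w^2}$. Dividing through by $\pi$ and multiplying by $b$ converts the polar identity above into exactly the claimed formula.

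For \eqref{eqn: coe abc perp small}, I would use the same polar reduction but restrict the spatial integration to $|v|<\delta$:
\[
2b\int_0^\delta r\,e^{(a+\e-b)r^2-bw^2}\,I_0(2brw)\,dr
=\frac{b}{\pi}\int_{|v|<\delta}e^{(a+\e)|v|^2-b|v-we_1|^2}\,dv.
\]
Completing the square in $v$ extracts the factor $e^{\frac{(a+\e)b}{b-a-\e}w^2}$, leaving the Gaussian integrand $e^{-(b-a-\e)|v-\frac{b}{b-a-\e}we_1|^2}\leq 1$ integrated over a disk of area $\pi\delta^2$. Bounding the integrand by $1$ gives the upper estimate $b\delta^2\,e^{\frac{(a+\e)b}{b-a-\e}w^2}$, and for $\delta\ll 1$ (small enough that $(b-a-\e)\delta\leq 1$) this is in turn at most $\delta\cdot\frac{b}{b-a-\e}\,e^{\frac{(a+\e)b}{b-a-\e}w^2}$, as required.

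There is no substantial obstacle here; the whole argument hinges on recognizing the 1D Bessel integral as a radial 2D Gaussian integral so that Lemma \ref{Lemma: abc} (and, for the second claim, the completion of squares that underlies it) can be invoked. An alternative route would expand $I_0$ in its power series and integrate term by term using $\int_0^\infty r^{2k+1}e^{-(b-a-\e)r^2}\,dr=\frac{k!}{2(b-a-\e)^{k+1}}$, as is done in Lemma \ref{Lemma: Prob measure}, but the polar reduction is cleaner and directly inherits both \eqref{eqn: coe abc} and the truncated-domain variant needed for \eqref{eqn: coe abc perp small}.
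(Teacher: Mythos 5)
Your proof is correct, and it takes a genuinely different route from the paper's. The paper proves \eqref{eqn: coe abc perp} by completing the square inside the one-dimensional integral and then recognizing the remaining Bessel-weighted Gaussian as a normalized quantity, invoking (a rescaled version of) the identity \eqref{eqn: I0} from Lemma~\ref{Lemma: Prob measure}, which in turn rests on the power series expansion of $I_0$; for \eqref{eqn: coe abc perp small} it uses the crude pointwise bound $I_0(y)\leq e^y$ to convert the Bessel integral into a pure Gaussian and then bounds the Gaussian factor by $1$. You instead recognize $2\pi I_0(2brw)$ as the angular integral $\int_0^{2\pi}e^{2brw\cos\theta}\,d\theta$, convert the entire one-dimensional Bessel integral into a two-dimensional Gaussian integral over $\mathbb{R}^2$ (resp.\ the disk $\{|v|<\delta\}$), and then invoke Lemma~\ref{Lemma: abc} (resp.\ the completion of squares underlying it, together with the trivial bound of the Gaussian integrand by $1$ and the area $\pi\delta^2$). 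Your route has the advantage of treating the ``perp'' lemma as a literal corollary of the ``parallel'' one, making the structural parallel between the two transparent and avoiding any use of the power series for $I_0$ or the inequality $I_0(y)\leq e^y$; the paper's route is more self-contained within the one-dimensional setting and reuses the normalization computation it already needed in Lemma~\ref{Lemma: Prob measure}. One small note: the requirement ``$\delta\ll 1$'' should be quantified as $(b-a-\e)\delta\leq 1$ in your argument, which matches what the paper implicitly assumes as well.
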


\begin{proof}
\[2b\int_{\mathbb{R}^+}v e^{\e v^2}e^{av^2} e^{-bv^2}e^{-bw^2}I_0(2bv w)dv\]
\[=2b\int_{\mathbb{R}^+} v e^{(a+\e-b)v^2}I_0(2bv w) e^{\frac{b^2}{a+\e-b}w^2} e^{\frac{b^2}{b-a-\e}w^2} dv e^{-bw^2}\]
\[=2(b-a-\e)\int_{\mathbb{R}^+}v e^{(a+\e-b)v^2}I_0(2bv w)e^{\frac{(bw)^2}{a+\e-b}}dv \frac{b}{b-a-\e}e^{\frac{(a+\e)b}{b-a-\e}w^2}\]
\[=\frac{b}{b-a-\e}e^{\frac{(a+\e)b}{b-a-\e}w^2},\]
where we use~\eqref{eqn: I0} in Lemma~\ref{Lemma: Prob measure} in the last line, then we obtain~\eqref{eqn: coe abc perp}.

Following the same derivation we have
\[2b\int_{0< v< \delta}v e^{\e v^2}e^{av^2} e^{-bv^2}e^{-bw^2}I_0(2bv w)dv\]
\[=2(b-a-\e)\int_{0<v<\delta} v e^{(a+\e-b)v^2}I_0(2bv w)e^{\frac{(bw)^2}{a+\e-b}}dv \frac{b}{b-a-\e}e^{\frac{(a+\e)b}{b-a-\e}w^2}.\]
Using the definition of $I_0$ we have
\[I_0(y)=\frac{1}{\pi}\int_{0}^{\pi} e^{y\cos\phi}d\phi\leq e^{y}.\]
Thus when $a-b+\e<0$,
\[2(b-a-\e)\int_{0<v<\delta} v e^{(a+\e-b)v^2}I_0(2bv w)e^{\frac{(bw)^2}{a+\e-b}}dv\]
\[\leq 2(b-a-\e)\int_{0<v<\delta} v e^{(a-b+\e)v^2}e^{2v b w}e^{\frac{(bw)^2}{a-b+\e}}=2(b-a-\e)\int_{0<v<\delta} v e^{(a-b+\e)(v+\frac{bw}{a-b+\e})^2}dv\]
\[\leq 2(b-a-\e)\int_{0<v<\delta}vdv<\delta,\]
where we use $\delta\ll 1$ in the last step, then we obtain~\eqref{eqn: coe abc perp small}. Then we derive~\eqref{eqn: coe perp small 2}.

\end{proof}

\begin{lemma}\label{Lemma: integrate normal small}
For any $m,n>0$, when $\delta\ll 1$, we have
\begin{equation}\label{eqn: smallness for i0}
2m^2\int_{\frac{n}{m}u_\perp+\delta^{-1}}^\infty     v_\perp e^{-m^2v_\perp^2}I_0(2mnv_\perp u_\perp)e^{-n^2u_\perp^2}dv_\perp \lesssim e^{-\frac{m^2}{4\delta^{2}}}.
\end{equation}
In consequence, for any $a>0,b>0,\e>0$ with $a+\e<b$,
\begin{eqnarray}
 2b\int_{\frac{b}{b-a-\e}w+\delta^{-1}}^\infty v e^{\e v^2}e^{av^2} e^{-bv^2}e^{-bw^2}I_0(2bv w)dv  &\leq  & e^{\frac{-(b-a-\e)}{4\delta^2}}\frac{b}{b-a-\e}e^{\frac{(a+\e)b}{b-a-\e}w^2} \label{eqn: coe perp smaller 2}\\
   &\leq  & \delta\frac{b}{b-a-\e}e^{\frac{(a+\e)b}{b-a-\e}w^2}. \label{eqn: coe perp small 2}
\end{eqnarray}

\end{lemma}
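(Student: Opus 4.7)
The plan is to reduce both inequalities to the Gaussian calculus already carried out in Lemma \ref{Lemma: perp abc}, losing a controlled amount in the region $\{v\ge \frac{bw}{b-a-\e}+\delta^{-1}\}$ by means of the elementary pointwise bound
\[
I_0(y)=\frac{1}{\pi}\int_0^\pi e^{y\cos\phi}d\phi\ \le\ e^{y},\qquad y\ge 0.
\]
For the first inequality \eqref{eqn: smallness for i0}, I would use this bound on $I_0(2mnv_\perp u_\perp)$ and complete the square to get
\[
e^{-m^2 v_\perp^2}\,I_0(2mnv_\perp u_\perp)\,e^{-n^2u_\perp^2}\ \le\ e^{-(mv_\perp-nu_\perp)^2}.
\]
Then I would change variables by $\tau=v_\perp-\frac{n}{m}u_\perp$, so the integration region becomes $\tau\ge \delta^{-1}$, and split the resulting integral into the $\tau$-moment and the $u_\perp$-translation correction.

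The $\tau$-moment is explicit: $2m^2\int_{\delta^{-1}}^\infty \tau\, e^{-m^2\tau^2}d\tau = e^{-m^2/\delta^2}$. The translation correction is $2mn\,u_\perp\int_{\delta^{-1}}^\infty e^{-m^2\tau^2}d\tau$, and for this I would use the crude tail estimate $\tau^2\ge \tau/\delta$ on the integration region to bound it by $\frac{2nu_\perp\delta}{m}\,e^{-m^2/\delta^2}$. The two contributions combine to $(1+O(nu_\perp\delta/m))\,e^{-m^2/\delta^2}$; writing $e^{-m^2/\delta^2}=e^{-m^2/(4\delta^2)}\cdot e^{-3m^2/(4\delta^2)}$ absorbs the polynomial prefactor for $\delta\ll 1$ and yields the claimed $\lesssim e^{-m^2/(4\delta^2)}$.

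For \eqref{eqn: coe perp smaller 2} the strategy is identical, but now I would first do the same completion of the square used inside the proof of Lemma \ref{Lemma: perp abc}: after invoking $I_0(2bvw)\le e^{2bvw}$, the exponent $(a+\e-b)v^2+2bvw-bw^2$ rearranges to $-(b-a-\e)\bigl(v-\tfrac{bw}{b-a-\e}\bigr)^2+\tfrac{(a+\e)b}{b-a-\e}w^2$. I would then substitute $u=v-\frac{bw}{b-a-\e}$, so that the range of integration becomes $u\ge \delta^{-1}$, and pull the factor $e^{\frac{(a+\e)b}{b-a-\e}w^2}$ out of the integral. The same two-piece estimate as above (exact moment for the $u\,du$ part, tail estimate for the constant part) gives
\[
2b\int_{\delta^{-1}}^\infty (u+\tfrac{bw}{b-a-\e})\,e^{-(b-a-\e)u^2}du\ \le\ \frac{b}{b-a-\e}\bigl(1+\tfrac{2bw\delta}{b-a-\e}\bigr)\,e^{-(b-a-\e)/\delta^2},
\]
and the same splitting $e^{-(b-a-\e)/\delta^2}=e^{-(b-a-\e)/(4\delta^2)}\cdot e^{-3(b-a-\e)/(4\delta^2)}$ absorbs the polynomial factor and gives \eqref{eqn: coe perp smaller 2}. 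The final bound \eqref{eqn: coe perp small 2} follows instantly from $e^{-(b-a-\e)/(4\delta^2)}\le \delta$ for $\delta\ll 1$, since any exponential of $-\delta^{-2}$ dominates any positive power of $\delta$.

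The main bookkeeping obstacle is the linear-in-$u_\perp$ (resp.\ linear-in-$w$) translation correction produced by the shift in the Gaussian centre. Unlike the pure $\tau$-moment, this term does not decay by itself; it must be killed by the super-exponential excess $e^{-3c/(4\delta^2)}$ left over when one asks for the looser bound $e^{-c/(4\delta^2)}$ rather than the sharp $e^{-c/\delta^2}$. Once one commits to this slightly sub-optimal exponent on the right-hand side, everything else is mechanical and no further estimates on $I_0$ are needed beyond the trivial $I_0(y)\le e^y$.
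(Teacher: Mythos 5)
There is a genuine gap, and it is precisely at the step you flag as ``the main bookkeeping obstacle.'' After replacing $I_0(2mn v_\perp u_\perp)$ by $e^{2mn v_\perp u_\perp}$ and completing the square, the integrand becomes $(\tau+\tfrac{n}{m}u_\perp)e^{-m^2\tau^2}$, and the translation correction you isolate is $\tfrac{2nu_\perp\delta}{m}\,e^{-m^2/\delta^2}$. To absorb this into $e^{-3m^2/(4\delta^2)}$ you would need $\tfrac{nu_\perp\delta}{m}\,e^{-3m^2/(4\delta^2)}$ to be bounded, but the left-hand side of \eqref{eqn: smallness for i0} depends on $u_\perp$ while the right-hand side does not, so the implicit constant must be uniform in $u_\perp$ — and for fixed $m,n,\delta$ your prefactor grows without bound as $u_\perp\to\infty$. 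The root cause is that $I_0(y)\le e^y$ overestimates $I_0$ by a factor of order $\sqrt{y}\sim\sqrt{u_\perp v_\perp}$, and near the Gaussian peak $v_\perp\approx \tfrac{n}{m}u_\perp$ this exactly cancels the $1/u_\perp$ one needs; your crude bound discards it, so the $u_\perp$-loss is real, not a bookkeeping artifact. The same defect carries over verbatim to \eqref{eqn: coe perp smaller 2}, where the uncontrolled factor is linear in $w$.

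The paper's proof avoids this by a case split. On $\{v_\perp>2\tfrac{n}{m}u_\perp\}$ the crude bound $I_0(y)\le e^y$ is harmless because there $(v_\perp-\tfrac{n}{m}u_\perp)^2\ge v_\perp^2/4$, so the surviving Gaussian $e^{-m^2v_\perp^2/4}$ absorbs the linear factor $v_\perp$ and gives $\lesssim e^{-m^2/(4\delta^2)}$ with no $u_\perp$-dependence. On the complementary range $\{v_\perp\le 2\tfrac{n}{m}u_\perp\}$ — which under the lower limit $v_\perp>\tfrac{n}{m}u_\perp+\delta^{-1}$ forces $u_\perp$ to be large — the paper establishes the sharper pointwise estimate $v_\perp I_0(2mn v_\perp u_\perp)\lesssim e^{2mn v_\perp u_\perp}$ (i.e.\ the extra polynomial $v_\perp$ costs nothing) by comparing, term by term with Stirling's formula, the Taylor coefficients of $v_\perp I_0(2mn v_\perp u_\perp)$ against those of $e^{2mn v_\perp u_\perp}$; with that estimate in hand the remaining integral is again a shifted Gaussian tail with no polynomial prefactor. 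To repair your argument you would need to import this case analysis; the one-line bound $I_0(y)\le e^y$ alone cannot produce a $u_\perp$-uniform estimate here, in contrast to Lemma \ref{Lemma: perp abc}, where the integration region near $v_\perp=0$ makes the crude bound sufficient.
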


\begin{proof}
We discuss two cases. The first case is $v_\perp>2\frac{n}{m}u_\perp$. We bound $I_0$ as
\[I_0(2mnv_\perp u_\perp)\leq \frac{1}{\pi}\int_0^\pi \exp \Big( 2mnv_\perp u_\perp\Big) d\theta=\exp \Big(2mnv_\perp u_\perp\Big).\]
The LHS of~\eqref{eqn: smallness for i0} is bounded by
\[2m^2\int_{\max\{2\frac{n}{m}u_\perp,\frac{n}{m}u_\perp+\delta^{-1}\}}^\infty    ve^{-m^2(v_\perp-\frac{n}{m}u_\perp)^2}dv.\]
Using $v_\perp>2\frac{n}{m}u_\perp$ we have
\begin{equation*}
(v_\perp-\frac{n}{m}u_\perp)^2\geq (\frac{v_\perp}{2}+\frac{v_\perp}{2}-\frac{n}{m}u_\perp)^2 \geq \frac{v_\perp^2}{4}.
\end{equation*}
Thus we can further bound LHS of~\eqref{eqn: smallness for i0} by
\begin{equation*}
2m^2\int_{\max\{2\frac{n}{m}u_\perp,\frac{n}{m}u_\perp+\delta^{-1}\}}^\infty      v_\perp e^{-\frac{m^2 v_\perp^2}{4}} dv_\perp \lesssim e^{-\frac{m^2}{4\delta^2}}.
\end{equation*}

The second case is $\leq v_\perp\leq 2\frac{n}{m}u_\perp$. Since $\frac{n}{m}u_\perp+\delta^{-1}<v_\perp$, without loss of generality, we can assume $u_\perp>\delta^{-1}$. We compare the Taylor series of $v_\perp I_0(2mnv_\perp u_\perp)$ and $\exp \Big(2mnv_\perp u_\perp \Big)$. We have
\begin{equation}\label{eqn: vI_0 taylor}
v_\perp I_0(2mnv_\perp u_\perp )=\sum_{k=0}^\infty \frac{m^{2k}n^{2k}v_\perp^{2k+1}u_\perp^{2k}}{(k!)^2},
\end{equation}
and
\begin{equation}\label{eqn: exp taylor}
\exp \Big(2mnv_\perp u_\perp \Big)=\sum_{k=0}^\infty \frac{2^k m^k n^k v_\perp^k u_\perp^k}{k!}.
\end{equation}
We choose $k_1$ such that when $k>k_1$, we can apply the Sterling formula such that
\[\frac{1}{2}\leq |\frac{k!}{k^ke^{-k}\sqrt{2\pi k}}|\leq 2.\]
Then we observe the quotient of the $k$-th term of~\eqref{eqn: vI_0 taylor} and the $2k+1$-th term of~\eqref{eqn: exp taylor},
\[\frac{m^{2k}n^{2k}v_\perp^{2k+1}u_\perp^{2k}}{(k!)^2}/\Big(\frac{2^{2k+1} m^{2k+1}n^{2k+1}v_\perp^{2k+1} u_\perp^{2k+1}}{(2k+1)!} \Big)\]
\[\leq \frac{4}{k^{2k}e^{-2k}2\pi k}/\Big(\frac{2^{2k+1} mn u_\perp}{(2k+1)^{2k+1}e^{-(2k+1)}\sqrt{2\pi (2k+1)}} \Big)\]
\[= \frac{4e}{2\pi mn}\Big(\frac{k+1/2}{k} \Big)^{2k+1}  \frac{\sqrt{2\pi (2k+1)}}{u_\perp}     \]
\[= \frac{4e}{2\pi mn}\Big(\frac{2k+1}{2k} \Big)^{2k+1} \frac{\sqrt{2\pi (2k+1)}}{u_\perp}\leq \frac{4e^2}{\sqrt{\pi} mn} \frac{\sqrt{k}}{u_\perp}.\]
Thus we can take $k_u=u_\perp^2$ such that when $k\leq k_u$,
\begin{equation}\label{k<ku}
\sum_{k=k_1}^{k_u} \frac{m^{2k}n^{2k}v_\perp^{2k+1}u_\perp^{2k}}{(k!)^2}\leq \frac{4e^2}{\sqrt{\pi}mn}\sum_{k=k_1}^{k_u}    \frac{2^{2k+1} m^{2k+1}n^{2k+1}v_\perp^{2k+1} u_\perp^{2k+1}}{(2k+1)!}.
\end{equation}
Similarly we observe the quotient of the $k$-th term of~\eqref{eqn: vI_0 taylor} and the $2k$-th term of~\eqref{eqn: exp taylor},
\[\frac{m^{2k}n^{2k}v_\perp^{2k+1}u_\perp^{2k}}{(k!)^2}/\Big(\frac{2^{2k} m^{2k}n^{2k}v_\perp^{2k} u_\perp^{2k}}{(2k)!} \Big)
\]
\[\leq \frac{4v_\perp}{k^{2k}e^{-2k}2\pi k}/\Big(\frac{2^{2k}}{(2k)^{2k}e^{-2k}\sqrt{4\pi k}} \Big)=\frac{4v_\perp}{\sqrt{\pi} \sqrt{k}}.\]
When $k>k_u=u_\perp^2$, by $u_\perp>\delta^{-1}$ and $v_\perp<2\frac{n}{m}u_\perp$ we have
\[\frac{4v_\perp}{\sqrt{\pi} \sqrt{k}}\leq \frac{4v_\perp}{\sqrt{\pi}u_\perp}\leq \frac{8n}{m\sqrt{\pi}}.\]
Thus we have
\begin{equation}\label{eqn: k>ku}
\sum_{k=k_u}^\infty \frac{m^{2k}n^{2k}v_\perp^{2k+1}u_\perp^{2k}}{(k!)^2}\leq \frac{8n}{m\sqrt{\pi}}\sum_{k=k_u}^\infty \frac{2^{2k} m^{2k}n^{2k}v_\perp^{2k} u_\perp^{2k}}{(2k)!}.
\end{equation}

Collecting~\eqref{eqn: k>ku}~\eqref{k<ku}, when $v_\perp<2\frac{n}{m}u_\perp$, we obtain
\begin{equation}\label{I0 <exp}
v_\perp I_0(2mnv_\perp u_\perp )\lesssim \exp \Big(\frac{2(1-r_\perp)^{1/2} v_\perp u_\perp}{r_\perp} \Big).
\end{equation}
By~\eqref{I0 <exp}, we have
\[\int_{\frac{n}{m}u_\perp+\delta^{-1}}^{2\frac{n}{m}u_\perp}v_\perp I_0(2mnv_\perp u_\perp)) e^{-m^2v_\perp^2}e^{n^2 v_\perp^2}dv\]
\begin{equation}\label{eqn: middle}
\lesssim \int_{\frac{n}{m}u_\perp+\delta^{-1}}^{2\frac{n}{m}u_\perp}    e^{-m^2(v_\perp-\frac{n}{m}u_\perp)^2} dv\leq e^{-m^2\delta^{-2}}.
\end{equation}
Collecting~\eqref{eqn: exp taylor} and~\eqref{eqn: middle} we prove~\eqref{eqn: smallness for i0}.

Then following the same derivation as~\eqref{eqn: coe abc perp},
\[2b\int_{\frac{b}{b-a-\e}w+\delta^{-1}}^\infty v e^{\e v^2}e^{av^2} e^{-bv^2}e^{-bw^2}I_0(2bv w)dv\]
\[=2(b-a-\e)\int_{\frac{b}{b-a-\e}w+\delta^{-1}}^\infty v e^{(a+\e-b)v^2}I_0(2bv w)e^{\frac{(bw)^2}{a+\e-b}}dv \frac{b}{b-a-\e}e^{\frac{(a+\e)b}{b-a-\e}w^2}\]
\[\leq e^{\frac{-(b-a-\e)}{4\delta^2}}\frac{b}{b-a-\e}e^{\frac{(a+\e)b}{b-a-\e}w^2} \leq \delta \frac{b}{b-a-\e}e^{\frac{(a+\e)b}{b-a-\e}w^2},\]
where we apply~\eqref{eqn: smallness for i0} in the first step in the third line and take $\delta\ll 1$ in the last step of the third line.

\end{proof}

\textbf{Acknowledgements.} The author thanks his advisors Chanwoo Kim and Qin Li for helpful discussion.

\bibliographystyle{amsplain}
\bibliography{CLB}

\end{document}